\theoremstyle{plain}\newtheorem{Theorem}{Theorem}[section]
\theoremstyle{plain}
\theoremstyle{plain}\newtheorem{Corollary}[Theorem]{Corollary}
\theoremstyle{plain}\newtheorem{Lemma}[Theorem]{Lemma}
\theoremstyle{plain}\newtheorem{Proposition}[Theorem]{Proposition}
\theoremstyle{definition}\newtheorem{Definition}[Theorem]{Definition}
\theoremstyle{definition}\newtheorem{Example}[Theorem]{Example}
\theoremstyle{definition}
\theoremstyle{definition}
\theoremstyle{definition}\newtheorem{Notation}[Theorem]{Notation}
\theoremstyle{definition}\newtheorem{Remark}[Theorem]{Remark}
\theoremstyle{definition}
\def\CA{{\mathcal{A}}}  
\def\CC{{\mathcal{C}}}  
\def\CD{{\mathcal{D}}}  
\def\CE{{\mathcal{E}}}
\def\CT{{\mathcal{T}}}
\def\CA{{\mathcal{A}}}
\def\CA{{\mathcal{A}}}
\def\add{\mathrm{add}}
\def\ann{\mathrm{ann}}
               \def\tenk{\otimes_k}     
                 \def\ten{\otimes}
\def\chr{\mathrm{char}}   
\def\coker{\mathrm{coker}}  
\def\dim{\mathrm{dim}}
\def\End{\mathrm{End}}         
\def\Endbar{\underline{\mathrm{End}}}    
\def\Ext{\mathrm{Ext}}        
\def\Gr{\mathrm{Gr}}
\def\Hom{\mathrm{Hom}}           
\def\Hombar{\underline{\mathrm{Hom}}}
\def\ker{\mathrm{ker}}           
\def\Id{\mathrm{Id}} 
      \def\tenA{\otimes_A}
\def\Im{\mathrm{Im}}      
    \def\tenD{\otimes_D}
\def\mod{\mathrm{mod}}
\def\modbar{\underline{\mathrm{mod}}}
\def\uZ{\underline{Z}}
\def\op{\mathrm{op}}
  \def\uPsi{\underline{\Psi}}        
  \def\pr{\mathrm{pr}}
\def\proj{\mathrm{proj}}
\def\Res{\mathrm{Res}}
\def\soc{\mathrm{soc}}
\def\Tr{\mathrm{Tr}}
\title{On abelian subcategories of triangulated categories} 
\author{Markus Linckelmann} 
\date{\today}
\begin{document}

\begin{abstract}
The stable module category of a selfinjective algebra is triangulated,
but need not have any nontrivial $t$-structures, and in particular, full abelian
subcategories need not arise as hearts of a $t$-structure. 
The purpose of this paper is to investigate full abelian subcategories 
of triangulated categories whose exact structures are related, and
more precisely,  to explore relations between invariants of finite-dimensional 
selfinjective algebras and full abelian subcategories of their
stable module categories. 

\end{abstract}

\maketitle

\tableofcontents

%%%%%%%%%%%%%%%%%%%%%%%%%%%%%%%%%%%%%%%%%%%%%%%%%%%%%%%%%%%%%%%
\section{Introduction}

\begin{Definition} \label{distDef}
Let $\CC$ be a triangulated category with shift functor $\Sigma$.
A {\it distinguished abelian subcategory of } $\CC$ is a full additive 
subcategory $\CD$ of $\CC$ which is abelian, such that for any short 
exact sequence
$$\xymatrix{0 \ar[r] & X\ar[r]^{f} & Y \ar[r]^{g} &Z\ar[r] & 0}$$
in $\CD$ there exists a morphism $h : Z\to$ $\Sigma(X)$ in $\CC$ such 
that the triangle
$$\xymatrix{X\ar[r]^{f} & Y \ar[r]^{g} &Z\ar[r]^{h} & \Sigma(X)}$$
is exact in $\CC$. 

\end{Definition}

Proper abelian subcategories, introduced in \cite[Def. 1.2]{Jorg20}, 
admissible abelian subcategories, from \cite[Def. 1.2.5]{BBD}, and thus 
hearts of $t$-structures are distinguished abelian subcategories, but not all
distinguished abelian subcategories are proper abelian subcategories.

The main motivation for considering  this definition is the abundance of
distinguished abelian categories in stable module categories of 
finite-dimensional selfinjective algebras, and the hope that these may
therefore shed light on the invariants of selfinjective algebras in terms
of their stable module categories.  
A conjecture of Auslander-Reiten predicts that for $A$ a finite-dimensional
algebra over a field, the stable category $\modbar(A)$  of finitely
generated left $A$-modules 
should determine the number of isomorphism classes of nonprojective 
simple $A$-modules. If this conjecture were true for blocks of finite group
algebras, it would imply some cases of Alperin's weight conjecture.
By a result of Martinez-Villa \cite{MV}, it would suffice 
to prove the Auslander-Reiten  conjecture for selfinjective algebras.
If $A$ is selfinjective,  then $\modbar(A)$ is  triangulated.
The following result recasts the Auslander-Reiten conjecture for
selfinjective algebras in terms
of maximal distinguished abelian subcategories of $\modbar(A)$. 
For $\CD$ an abelian category, we denote by $\ell(\CD)$ the number of 
isomorphism classes of simple objects, with the convention $\ell(\CD)=$ 
$\infty$ if $\CD$ has infinitely many isomorphism classes of simple objects. 
For $A$ a finite-dimensional algebra over a field, we write $\ell(A)=$ 
$\ell(\mod(A))$; that is, $\ell(A)$ is
the number of isomorphism classes of simple $A$-modules. 

\begin{Theorem} \label{ellA}
Let $A$ be a finite-dimensional selfinjective algebra over a field such 
that all simple $A$-modules are nonprojective. The following hold.

\begin{enumerate}
\item[{\rm (i)}] If $\CD$ is a distinguished abelian subcategory of 
$\modbar(A)$ containing all simple $A$-modules, then the simple
$A$-modules are exactly the simple objects in $\CD$. In particular,
in that case we have $\ell(A)=\ell(\CD)$. 

\item[{\rm (ii)}]  The stable module
category  $\modbar(A)$ has a maximal distinguished abelian subcategory 
$\CD$ satisfying $\ell(\CD)=$ $\ell(A)$. 
\end{enumerate}
\end{Theorem}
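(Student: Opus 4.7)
My approach is to prove (i) by establishing the two containments of simple objects separately, and then to derive (ii) by Zorn's lemma applied to the already-distinguished-abelian category of semisimple modules.

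For the forward direction of (i)---that every simple $A$-module $S\in\CD$ is simple in $\CD$---I argue by contradiction. Suppose $\iota\colon T\to S$ is a monomorphism in $\CD$ with $T\neq 0$. First $\iota\neq 0$ in $\modbar(A)$, since otherwise the mono condition forces $T=0$; and since $S$ is nonprojective simple, $\End_{\modbar(A)}(S)$ coincides with the division ring $D:=\End_A(S)$ by Schur. Testing the mono condition against each simple $A$-module $S'\in\CD$: for $S'\not\cong S$, $\Hom_{\modbar(A)}(S',S)=0$, so $\Hom_{\modbar(A)}(S',T)=0$; for $S'=S$, the injection $\Hom_{\modbar(A)}(S,T)\hookrightarrow D$ via post-composition with $\iota$ has image either $D$ or $0$ (as a right $D$-submodule of $D$). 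In the image $=D$ case, some $f\colon S\to T$ satisfies $\iota f=\mathrm{id}_S$ in $\modbar(A)$, so $\iota$ is stably split epi and $T\cong S\oplus C$ in $\modbar(A)$; the abelian category $\CD$ is idempotent complete and so contains $C$ as the kernel of the idempotent $f\iota\in\End_\CD(T)$, and then the mono condition forces $C=0$, making $\iota$ an isomorphism. In the image $=0$ case one obtains $\Hom_{\modbar(A)}(S',T)=0$ for every simple $A$-module $S'$.

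The image $=0$ case is ruled out by the main auxiliary lemma, which I view as the most delicate step: for $A$ selfinjective and any nonzero $T\in\modbar(A)$, some simple $A$-module $S'$ satisfies $\Hom_{\modbar(A)}(S',T)\neq 0$. My plan to prove this is to choose a representative of $T$ with no projective summand, pick a simple $S'\subseteq\soc(T)$, and show the inclusion $S'\hookrightarrow T$ does not factor through a projective. Such a factorization would route through the injective hull $I(S')$ (which is projective since $A$ is selfinjective), and essentiality of $S'$ in $I(S')$ would force the extending map $I(S')\to T$ to be injective, exhibiting $I(S')$ as a projective summand of $T$ and contradicting the no-summand reduction. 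For the reverse direction of (i), let $M\in\CD$ be simple in $\CD$; applying the lemma to $T=M$ produces a nonzero morphism $f\colon S\to M$ in $\CD$ for some simple $A$-module $S\in\CD$. Simplicity of $M$ in $\CD$ forces $f$ to be epi in $\CD$, the forward direction of (i) applied to $S$ forces $f$ to be mono, so $f$ is an isomorphism and $M\cong S$ in $\modbar(A)$.

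For (ii), the full subcategory $\CD_0\subseteq\modbar(A)$ on the semisimple $A$-modules is a distinguished abelian subcategory containing all simple $A$-modules: every short exact sequence in $\CD_0$ is split, and split triangles are distinguished. Zorn's lemma applied to the poset of distinguished abelian subcategories of $\modbar(A)$ containing $\CD_0$ (ordered by inclusion) yields a maximal $\CD$; the chain condition reduces to verifying that the union of an ascending chain of distinguished abelian subcategories remains a distinguished abelian subcategory, noting that kernels and cokernels formed in any term of the chain continue to represent the appropriate universal objects in the union. Since $\CD\supseteq\CD_0$ contains all simple $A$-modules, part (i) yields $\ell(\CD)=\ell(A)$.
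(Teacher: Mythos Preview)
Your argument for (i) is correct and runs dual to the paper's. The paper works with surjections $\psi\colon U\to S$ from an object of $\CD$ onto a simple $A$-module and shows that if $\psi$ is not an isomorphism in $\mod(A)$ then $\underline\psi$ is not a monomorphism in $\CD$, by picking a simple $T\subseteq\ker(\psi)$ and noting that the inclusion $T\hookrightarrow U$ survives in $\modbar(A)$. You instead test a putative monomorphism $\iota\colon T\to S$ against maps \emph{from} simples, exploit the division-ring structure of $\Endbar_A(S)$, and invoke the same socle lemma to exclude the degenerate case. Both routes hinge on the same fact---that an inclusion of a nonprojective simple into a module with no projective summand is stably nonzero---which you prove carefully and the paper asserts. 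Your reverse direction is cleaner than the paper's, since you again use maps from simples, whereas the paper needs the dual fact (a surjection from an indecomposable nonprojective onto a simple is stably nonzero) without stating it explicitly.

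For (ii), your outline matches the paper's exactly, but you pass too quickly over the one genuinely nontrivial point in the Zorn argument. You write that ``kernels and cokernels formed in any term of the chain continue to represent the appropriate universal objects in the union''---but this amounts to saying that whenever $\CD\subseteq\CD'$ are distinguished abelian subcategories of $\CC$, the inclusion functor is exact. That is not automatic: the kernel of $f$ computed in $\CD$ must satisfy the universal property against all test objects in $\CD'$, not just those in $\CD$. The paper isolates this as a separate proposition and proves it by first establishing a structural result: any exact triangle in $\CC$ whose three vertices lie in a distinguished abelian subcategory $\CD'$ splits as the direct sum of a triangle coming from a short exact sequence in $\CD'$ and a triangle of the shape $W\to 0\to\Sigma(W)\xrightarrow{\Id}\Sigma(W)$, with $W$ injective and $\Sigma(W)$ projective in $\CD'$. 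Applying this to the triangle determined by a short exact sequence in $\CD$ forces $W=0$ (since the map $X\to Y$ was already mono in $\CD$), so the sequence stays exact in $\CD'$. You should either supply this argument or flag it as the substantive lemma underpinning the chain step; as written, your plan identifies the right claim but does not indicate why it holds.
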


Statement (i) of this theorem is Theorem \ref{AmodIembeddings}, and
statement (ii) will be proved in Section \ref{exactSection}. 
Note that $\modbar(A)$ may have distinguished abelian subcategories
$\CD$ satisfying $\ell(\CD)=\infty$; see Example 
\ref{ellDinfinite-example}.  
A subcategory $\CD$ of a triangulated catergory $\CC$ is called
{\it extension closed} if for any exact triangle $U\to V\to W\to \Sigma(U)$
in $\CC$ with $U$, $W$ belonging to $\CD$, the object $V$ is isomorphic 
to an object in $\CD$.
Hearts of $t$-structures on a triangulated category $\CC$ are extension 
closed distinguished abelian subcategories. In general, distinguished 
abelian subcategories need not be extensions closed.
The following result is a basic  construction principle for distinguished 
abelian subcategories in $\modbar(A)$, together with a sufficient
criterion for detecting certain distinguished abelian subcategories which are 
not extension closed.

\begin{Theorem} \label{AmodIembedding-intro}
Let $A$ be a finite-dimensional selfinjective algebra over a field
and let $I$ be a proper ideal in $A$. Denote by $r(I)$ the right 
annihilator of $I$ in $A$.

\begin{enumerate}
\item[{\rm (i)}]
The canonical map $A\to A/I$ induces an embedding $\mod(A/I)\to$ 
$\modbar(A)$ as a distinguished abelian subcategory in $\modbar(A)$
if and only if  $r(I)\subseteq$ $I$.  

\item[{\rm (ii)}]
Suppose that $r(I)\subseteq$ $I\subseteq$ $J(A)$ and that $A/I$ is 
selfinjective. Then  the distinguished abelian subcategory $\mod(A/I)$ 
of $\modbar(A)$ is not extension closed in $\modbar(A)$.
\end{enumerate}
\end{Theorem}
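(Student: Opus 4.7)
The plan for part (i) is to observe that the functor $F \colon \mod(A/I) \to \modbar(A)$ induced by the canonical map $A \to A/I$ is automatically a distinguished-abelian embedding once it is faithful: fullness follows from $\Hom_A(M,N) = \Hom_{A/I}(M,N)$ for $A/I$-modules $M,N$, the image is abelian via $\mod(A/I)$, and for $A$ selfinjective every short exact sequence in $\mod(A)$ yields a distinguished triangle in $\modbar(A)$. Faithfulness reduces to showing that no nonzero $A$-map $M \to N$ between $A/I$-modules factors through a projective $A$-module. For a factorization $M \to P \to N$, the conditions $IM = 0$ and $IN = 0$ force it to pass through the image of $P[I] := \{p \in P : Ip = 0\}$ inside $P/IP$; taking $P = A^n$ this image is $((r(I)+I)/I)^n$, which vanishes precisely when $r(I) \subseteq I$, and the general projective case follows by restriction to a summand. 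Conversely, any $x \in r(I) \setminus I$ yields a nonzero endomorphism of $A/I$ sending $\bar 1 \mapsto \bar x$ that factors through $A$, breaking faithfulness.

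For part (ii) the plan is a proof by contradiction: assume $\mod(A/I)$ is extension closed in $\modbar(A)$. The key claim, proved by induction on $n$, is that every $A/I^n$-module $V$ is isomorphic in $\modbar(A)$ to an object of $\mod(A/I)$. The inductive step applies extension closedness to the short exact sequence
$$0 \to IV \to V \to V/IV \to 0$$
of $A$-modules: the quotient $V/IV$ is annihilated by $I$ and so lies in $\mod(A/I)$, while the identity $I^n \cdot IV = I^{n+1}V = 0$ makes $IV$ an $A/I^n$-module, stably isomorphic to an $A/I$-module by the inductive hypothesis.

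Since $I \subseteq J(A)$ is nilpotent, every $A$-module is annihilated by some power $I^N$, so every object of $\modbar(A)$ is stably an $A/I$-module. Combined with (i), $F$ becomes an equivalence of categories and transports the abelian structure of $\mod(A/I)$ onto $\modbar(A)$. But a triangulated category is abelian only when every distinguished triangle splits, equivalently when $\Hom_{\modbar(A)}(X, \Sigma Y) = \Ext^1_A(X,Y)$ vanishes for all $X, Y$, which forces $A$ to be semisimple; this contradicts $J(A) \supseteq I \neq 0$, the nonvanishing of $I$ being forced by $r(I) \subseteq I$ together with $A \neq 0$.

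The delicate point in executing this plan is a careful reading of extension closedness in the inductive step, since the hypothesis guarantees only isomorphism \emph{in} $\modbar(A)$ rather than literal membership in $\mod(A/I)$. I note that this plan does not invoke the selfinjectivity of $A/I$; that hypothesis may be superfluous for the stated conclusion, or the author's intended proof may take a more constructive route, perhaps by exhibiting an explicit triangle built from $\Omega_A(A/I) = I$, which sits in $\modbar(A)$ but fails to lie in $\mod(A/I)$ whenever $I^2 \neq 0$.
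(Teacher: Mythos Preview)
Your argument for part (i) is correct and follows essentially the same route as the paper: both reduce the question to showing that $\Hom_A^{\mathrm{pr}}(M,N)=\{0\}$ for all $A/I$-modules $M,N$ precisely when $r(I)\subseteq I$. The paper packages this via the equivalence $\End_A^{\mathrm{pr}}(A/I)=\{0\} \Leftrightarrow r(I)\subseteq I$ (its Lemma on $\End^{\mathrm{pr}}$) together with a lemma that this condition propagates to all quotients of powers of $A/I$; your annihilator computation in $P[I]\to P/IP$ is an equally clean way to see the same thing.

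Your argument for part (ii) has a genuine gap in the final step. The implication ``a triangulated category is abelian only when every distinguished triangle splits'' is false. What \emph{is} true is that in a triangulated category every monomorphism and every epimorphism is split; so if $\modbar(A)$ is abelian then this abelian category is semisimple, and your argument correctly yields $A/I$ semisimple, hence $I=J(A)$. But semisimplicity of the underlying abelian category does not force $\Hom_{\modbar(A)}(X,\Sigma Y)=0$. Take $A=k[x]/(x^2)$ and $I=(x)=J(A)$: then $r(I)=I$, $A/I=k$ is selfinjective, and $\modbar(A)\simeq\mod(k)$ is simultaneously triangulated and semisimple abelian, yet the triangle
\[
k \longrightarrow 0 \longrightarrow k \xrightarrow{\ \mathrm{id}\ } \Sigma k \cong k
\]
is not split, and $\Ext^1_A(k,k)=\Hombar_A(k,\Sigma k)\cong k\neq 0$. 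So your chain of implications breaks before reaching ``$A$ is semisimple''. Your own observation that the argument never uses selfinjectivity of $A/I$ was the right warning sign.

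The paper takes a different and more direct route, and this is where selfinjectivity of $A/I$ is used. It first establishes a criterion (its Theorem \ref{Dextclosed2}): for $\CD\cong\mod(D)$ a distinguished abelian subcategory of $\CC$, extension closedness is equivalent to $\Ext^1_{\CC}(U,Y)=0$ for every $U\in\CD$ and every \emph{injective} object $Y$ of $\CD$. Since $A/I$ is selfinjective, $A/I$ itself is injective in $\mod(A/I)$, so it suffices to show $\Ext^1_A(A/I,A/I)\neq 0$. The paper identifies $\Ext^1_A(A/I,A/I)\cong\Hom_A(I,A/I)$ (using $\Omega_A(A/I)\cong I$ and the vanishing $\Hom_A^{\mathrm{pr}}(I,A/I)=0$), and then observes that, because $I\subseteq J(A)$ and $A/I$ is selfinjective, every simple $A$-module embeds in $A/I$, so any simple quotient of the nonzero ideal $I$ yields a nonzero map $I\to A/I$. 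Your closing guess about building a triangle from $\Omega_A(A/I)=I$ was therefore pointing in exactly the right direction.
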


Statement (i)  will be proved as part of Theorem \ref{AmodIembedding1}, 
itself a consequence of the more general Theorem \ref{full-embeddings}, 
and statement (ii) follows from Proposition \ref{notextclosed}.
The first part of this theorem points to the fact that distinguished 
abelian subcategories of $\modbar(A)$ tend to come in varieties - see 
Proposition  \ref{AmodIembedding2}.

 If a triangulated category $\CC$ carries a 
structure of a monoidal category and if $\CD$ is a distinguished
abelian subcategory of $\CC$ which is also a monoidal
subcategory of $\CC$, we call $\CD$ a {\it monoidal distinguished
abelian subcategory of} $\CC$.
Stable module categories of finite group algebras provide examples
of monoidal distinguished abelian subcategories which do not arise as heart of 
a $t$-structure, and which are not extension closed. 
For $G$ a finite group and $p$ a prime, we denote by $O^p(G)$ the 
smallest normal subgroup of $G$ such that $G/O^p(G)$ is a $p$-group. 

\begin{Theorem} \label{kGmodN}
Let $k$ be a field of prime characteristic $p$ and $G$ a finite group. 
Let $N$ be a normal subgroup of $G$ of order divisible by $p$. 

\begin{enumerate}
\item[{\rm (i)}]
Restriction along the canonical surjection $G\to$ $G/N$ induces a 
full embedding of $\mod(kG/N)$ as a 
symmetric monoidal distinguished abelian subcategory in $\modbar(kG)$.

\item[{\rm (ii)}]
There is no nontrivial $t$-structure on $\modbar(kG)$; that is, the heart of 
any $t$-structure on $\modbar(kG)$ is zero. In particular, $\mod(kG/N)$ is
not the heart of a $t$-structure on $\modbar(kG)$. 

\item[{\rm (iii)}]
If $O^p(N)$ is a proper subgroup of $N$, then the abelian subcategory
$\mod(kG/N)$ of $\modbar(kG)$ is not extension closed in $\modbar(kG)$.
\end{enumerate}
\end{Theorem}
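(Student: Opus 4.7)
The plan is to handle the three parts of Theorem \ref{kGmodN} in turn, using earlier results of the paper as building blocks.

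For (i), apply Theorem \ref{AmodIembedding-intro}(i) to $A = kG$ and $I = I(N) \cdot kG$, where $I(N)$ is the augmentation ideal of $kN$, so that $A/I \cong k[G/N]$. A direct computation shows that the right annihilator of $I$ equals $\hat N \cdot kG$ with $\hat N = \sum_{n \in N} n$, and $\varepsilon(\hat N) = |N| = 0$ in $k$ (using $p \mid |N|$) implies $\hat N \in I(N) \subseteq I$, giving $r(I) \subseteq I$. For the monoidal structure, $\otimes_k$ with diagonal action is symmetric monoidal on $\mod(kG)$, inflation along $G \to G/N$ is strict symmetric monoidal, and the structure descends to $\modbar(kG)$ because $kG \otimes_k V$ is free over $kG$ for every finite-dimensional $V$, so tensoring with a projective preserves projectivity.

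For (ii), suppose by contradiction that a $t$-structure on $\modbar(kG)$ has nonzero heart $\mathcal{H}$ and pick $0 \neq M \in \mathcal{H}$, necessarily a non-projective $kG$-module. The $t$-structure axiom forces $\Hom_{\modbar(kG)}(M, \Sigma^{-1} M) = \widehat{\Ext}^{-1}_{kG}(M, M) = 0$. Since $kG$ is a symmetric algebra, Tate duality provides a natural isomorphism $\widehat{\Ext}^{-1}_{kG}(M, M) \cong \Hombar_{kG}(M, M)^{*}$, and the right-hand side is nonzero because $\uId_M$ does not factor through a projective. This contradicts the vanishing, so $\mathcal{H} = 0$; the ``in particular'' clause follows since $\mod(k[G/N])$ is nonzero.

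For (iii), the plan is to exhibit a nonsplit short exact sequence $0 \to k[G/N] \to V \to k[G/N] \to 0$ in $\mod(kG)$ whose middle term $V$ is not stably isomorphic in $\modbar(kG)$ to any $k[G/N]$-module. Since $N/O^p(N)$ is a nontrivial $p$-group, pick a nonzero $\chi: N/O^p(N) \to k$; inflation produces a nonsplit $kN$-extension $0 \to k \to E_\chi \to k \to 0$. Set $V = \Ind_N^G E_\chi$; exactness of induction and $\Ind_N^G k = k[G/N]$ yield the desired sequence. The key step is the restriction $V|_N \cong \bigoplus_{g \in G/N} {}^g E_\chi$, a direct sum of two-dimensional nonsplit extensions of trivial $kN$-modules, whereas any $V' \in \mod(k[G/N])$ restricts to a trivial $kN$-module and any projective $kG$-module restricts to a free $kN$-module. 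Provided each ${}^g E_\chi$ is non-projective over $kN$, a Krull--Schmidt comparison in $\mod(kN)$ rules out a stable isomorphism of $V$ to any $k[G/N]$-module.

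The main obstacle is the degenerate case where $E_\chi$ happens to be projective as a $kN$-module; this can occur only when $|N|_p = 2$ and $p = 2$, and then $\Ind_N^G E_\chi$ is projective, so the naive construction becomes stably trivial. To handle this subcase I would replace $E_\chi$ by another nonzero class in $\widehat{\Ext}^{1}_{kG}(k[G/N], k[G/N]) \cong \widehat{H}^1(N, k)^{[G:N]}$, which is nonzero by Shapiro's lemma together with $\widehat{H}^1(N, k) \neq 0$, and detect non-closure by a finer invariant than indecomposable type---such as a count of projective $kN$-summands of the restricted middle term incompatible with the contributions coming from a hypothetical stable isomorphism to a $k[G/N]$-module. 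This subcase is where I expect the technical core of the argument to lie.
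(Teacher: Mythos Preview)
Your arguments for (i) and (ii) match the paper's: for (i) you verify $r(I)\subseteq I$ for $I=kG\cdot I(kN)$ via $\sum_{n\in N} n\in I(kN)$ when $p\mid |N|$, exactly as in Theorem~\ref{kGexample}; for (ii) your Tate-duality argument is the symmetric-algebra case of the Auslander--Reiten duality argument behind Proposition~\ref{selfinj-not} and Corollary~\ref{sym-not}.

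For (iii) the paper takes a different route from yours. It does not build an explicit extension; instead it invokes the abstract criterion of Theorem~\ref{Dextclosed2}: a distinguished abelian subcategory $\CD\cong\mod(D)$ of $\CC$ is extension closed if and only if $\Ext^1_\CC(U,Y)=0$ for every $U$ in $\CD$ and every injective object $Y$ of $\CD$. Since $kG/N$ is symmetric, the regular module $kG/N$ is itself injective in $\mod(kG/N)$, so it suffices to show $\Ext^1_{kG}(kG/N,kG/N)\neq 0$. By Lemma~\ref{notextclosed1}(ii) this equals $\Hom_{kG}(I,kG/N)$, and Frobenius reciprocity together with Lemma~\ref{H1specialcase} identifies it with a direct sum of copies of $\Hom(N,k)$, which is nonzero precisely when $O^p(N)\neq N$. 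No case distinction on $|N|_p$ enters, and no specific middle term is ever examined. Your route via $V=\Ind_N^G E_\chi$ and Krull--Schmidt on $\Res^G_N V$ is a legitimate and more hands-on alternative when it applies.

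The exception $p=2$, $|N|_2=2$ that you isolate is a genuine gap rather than a detail. In that regime $E_\chi$ is the projective cover of $k$ over $kN$, so $V=\Ind_N^G E_\chi$ is projective over $kG$ and your triangle collapses; moreover restriction to $N$ can no longer separate candidate middle terms from inflated $kG/N$-modules, since the only non-projective indecomposable $kN$-module with all composition factors trivial is $k$ itself. Your proposed remedy---choose another Tate class and an unspecified ``finer invariant''---is not an argument: you do not say which class or which invariant, and for instance when $G=N\cong C_2$ every extension of $k$ by $k$ over $kG$ already has stably trivial middle term, so no choice of class can help there. Closing the gap along your lines would require an idea genuinely beyond restriction to $N$; the paper sidesteps this by working entirely at the level of $\Ext^1$.
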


See Theorem \ref{kGexample}, Corollary \ref{sym-not}, Corollary 
\ref{kG-sym-not}, and Proposition  \ref{notextclosedProp} for more precise 
statements and proofs.
For $k$ a field of prime
characteristic $p$ and $P$ a finite $p$-group, the Auslander-Reiten 
conjecture is known to hold for $kP$ (cf. \cite[Theorem 3.4]{Listable}) . 
We use this to classify the distinguished
abelian subcategories of $\modbar(kP)$ which are equivalent to the
module categories of split finite-dimensional algebras in Theorem 
\ref{Pgroupdist}.

Section \ref{stablemoduleSection} describes some construction principles 
of distinguished abelian subcategories of stable module categories. 
Section \ref{simplestablemoduleSection} describes distinguished abelian
subcategories of $\modbar(A)$ whose simple objects are the
simple $A$-modules, where $A$ is a finite-dimensional selfinjective
algebra. Section  \ref{groupSection} specialises previous results to 
distinguished abelian subcategories in finite group algebras over a field 
of prime characteristic $p$, and includes a proof of the first statement
of Theorem \ref{kGmodN}. 
The section \ref{basicpropertySection} contains some general facts
on distinguished abelian subcategories. In particular, it is shown in 
Proposition \ref{hunique} that the morphism $h$ in Definition
\ref{distDef} is unique. Section \ref{exactSection} contains technicalities,
needed for the proof of Theorem \ref{ellA}, 
on the interplay between short exact sequences in $\mod(A)$ and
short exact sequences in a distinguished abelian subcategory $\CD$
of $\modbar(A)$.
The main result of Section \ref{ExtSection}  is a criterion on extension
closure of distinguished abelian subcategories, needed for the last part
of Theorem \ref{kGmodN}.
 Section \ref{CabSection2} relates embeddings of
module categories of selfinjective algebras to a result of Cabanes.
Section \ref{ExamplesSection} contains
examples and further remarks.

\begin{Remark} \label{motivationRemark}
The present paper, investigating  abelian subcategories of 
triangulated categories in situations where there are no nontrivial 
$t$-structures, started out as a speculation about  a possible
analogue of stability spaces (cf. \cite{Bridgeland}) for stable 
module categories of finite-dimensional selfinjective algebras. 
Another  interesting angle to pursue would be 
connections with abelian {\it quotient} categories of triangulated 
categories, which appear in numerous sources, for instance, in \cite{KZ}, 
\cite{IY}, \cite{GJ}, in the context of torsion and mutation pairs in 
triangulated categories. See also \cite{Dugas15}, which explores this
topic with a particular emphasis on stable module categories of 
finite-dimensional selfinjective algebras.
\end{Remark}

\begin{Notation}
Throughout this paper, $k$ is a field.
Modules are unital left modules, unless stated otherwise, and algebras
are nonzero unital associative.  Let $A$ be a
finite-dimensional $k$-algebra. We denote by $\mod(A)$ the abelian 
category of finitely generated $A$-modules. We denote by $\modbar(A)$ 
the stable module category of $\mod(A)$. That is, the objects of $\modbar(A)$
are the same as in $\mod(A)$, and for any two finitely generated 
$A$-modules $U$, $V$, the morphism space in $\modbar(A)$ from $U$ to $V$ 
is the $k$-space $\Hombar_A(U,V)=$ $\Hom_A(U,V)/\Hom_A^\pr(U,V)$, where 
$\Hom_A^\pr(U,V)$ is the space of all $A$-homomorphisms from $U$ to $V$ 
which factor through a projective $A$-module. Composition in 
$\modbar(A)$ is induced by that in $\mod(A)$. We write $\Endbar_A(U)=$ 
$\Hombar_A(U,U)$ and $\End^\pr_A(U)=$ $\Hom_A^\pr(U,U)$. See
\cite[\S 2.13]{LiBookI} for more details. The {\it Nakayama functor of} $A$ is
the functor $\nu=$ $A^\vee\tenA-$ on $\mod(A)$, where $A^\vee=$ 
$\Hom_k(A,k)$ is the $k$-dual of $A$ regarded as an $A$-$A$-bimodule. 

The algebra $A$ is called {\it selfinjective} if $A$ is injective as a 
left (or right) $A$-module. Equivalently, $A$ is selfinjective if the 
classes of finitely generated projective and injective $A$-modules 
coincide. By results of Happel \cite{Hap}, if $A$ is selfinjective, 
then $\modbar(A)$ is a triangulated category, with shift functor 
$\Sigma$ induced by the operator sending an $A$-module $U$  to the cokernel 
of an injective envelope $U\to$ $I_U$, and with exact triangles in 
$\modbar(A)$ induced by short exact sequences in $\mod(A)$. 
If $A$ is selfinjective, then the Nakayama functor $\nu$ on $\mod(A)$
is an equivalence and induces an equivalence on $\modbar(A)$, and
$\tau=$ $\Sigma^{-2}\circ\nu$ is the Auslander-Reiten translate. 
The algebra $A$ is a {\it Frobenius algebra} if $A$ is isomorphic to its
$k$-dual $A^\vee$ as a left or right $A$-module. A Frobenius algebra
is selfinjective. The algebra $A$ is called {\it symmetric} if $A$ is
isomorphic to its $k$-dual $A^\vee$ as an $A$-$A$-bimodule. The image
$s$ in $A^\vee$ of $1_A$ under some bimodule isomorphism $A\cong$ 
$A^\vee$ is called a {\it symmetrising form of} $A$. If $A$ is symmetric,
then $A$ is a Frobenius algebra, hence selfinjective, and the Nakayama
functor is isomorphic to the identity functor on $\mod(A)$. Finite group 
algebras, their blocks, and Iwahori-Hecke algebras are symmetric.
See for instance \cite[Ch. III]{SkYaI}, \cite[\S\S 2.11, 2.14]{LiBookI} and 
\cite[Appendix A.3]{LiBookII} for more background. 

For $I$ a left ideal in $A$, its right annihilator
$r(I)=$ $\{a\in A\ |\ Ia=0\}$ is a right ideal, and for $J$ a right
ideal in $A$, its left annihilator $l(J)=$ $\{a\in A\ |\ aJ=0\}$ is
a left ideal. If $I$ is an ideal in $A$, then so are $r(I)$ and $l(I)$.
By results of Nakayama in \cite{NakI}, \cite{NakII}, if $A$ is 
selfinjective, then the correspondencs $I\mapsto r(I)$ and 
$J\mapsto l(J)$ are inclusion 
reversing bijections between the sets of left and right ideals in $A$.
These bijections are inverse to each other and restrict to bijections
on the set of ideals in $A$. In particular, for any ideal $I$ in $A$ 
we have $r(l(I))=$ $I=$ $l(r(I))$. If $A$ is a Frobenius algebra,
then $\dim_k(I)+\dim_k(r(I))=$ $\dim_k(A)$, and if $A$ is symmetric, 
then  $r(I)=l(I)$ for any ideal $I$ in $A$. See 
\cite[Chapter IV, Section 6]{SkYaI} for details.

We will make use without further comment of the standard 
Tensor-Hom adjunction.
\end{Notation}

%%%%%%%%%%%%%%%%%%%%%%%%%%%%%%%%%%%%%%%%%%%%%%%%%%%%%%%%%%%%%%%%%%%%%%%
\section{Distinguished abelian subcategories in stable module
categories} \label{stablemoduleSection}

The stable module category of a finite-dimensional non-semisimple
selfinjective $k$-algebra $A$ need not have any 
$t$-structures (see Proposition \ref{selfinj-not} and Corollary 
\ref{sym-not}), but it  always has  distinguished abelian subcategories, 
and these tend to come in  varieties (see Proposition 
\ref{AmodIembedding2}). 
The first result in this section describes those distinguished abelian 
subcategories of $\modbar(A)$  which arise as image of a full abelian 
subcategory of $\mod(A)$ equivalent to $\mod(D)$, for some other
finite-dimensional $k$-algebra $D$. 

\begin{Theorem} \label{full-embeddings}
Let $A$ be a finite-dimensional selfinjective $k$-algebra, and let
$D$ be a finite-dimensional $k$-algebra. Let $Y$ be a finitely generated
$A$-$D$-bimodule. 
The functor $Y\tenD-$ is a full exact embedding of $\mod(D)$ into
$\mod(A)$ and induces an embedding of $\mod(D)$ as a distinguished 
abelian subcategory of $\modbar(A)$ if and only if the
following conditions hold. 
\begin{enumerate}
\item
$\End_A^\pr(Y) = \{0\}$,
\item
$Y$ is projective as a right $D$-module, and
\item
the Tensor-Hom adjunction unit maps $V \to \Hom_A(Y,Y\tenD V)$,
$v\mapsto (y\mapsto y\ten v))$, 
are isomorphisms, for all finitely generated $D$-modules $V$.
\end{enumerate}
\end{Theorem}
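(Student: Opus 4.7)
The plan is to prove each direction separately. For necessity, assume $F := Y\tenD-$ is a full exact embedding of $\mod(D)$ into $\mod(A)$ that remains an embedding as a distinguished abelian subcategory of $\modbar(A)$. Condition (2) is immediate from exactness: since $Y$ is finitely generated as a bimodule, hence finite-dimensional over $k$, flatness and projectivity coincide for $Y$ as a finitely generated module over the finite-dimensional algebra $D$. For (3), the standard Tensor-Hom adjunction yields a functorial identification $\Hom_A(FV, FW) \cong \Hom_D(V, \Hom_A(Y, FW))$ under which $F : \Hom_D(V, W) \to \Hom_A(FV, FW)$ corresponds to post-composition with the unit $\eta_W : W \to \Hom_A(Y, FW)$; fullness and faithfulness of $F$ then force $\eta_W$ to be an isomorphism, which is (3). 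Condition (1) is extracted from faithfulness of the induced functor $\bar F : \mod(D) \to \modbar(A)$ applied to $V = W = D$: using (3) to identify $\End_A(Y) \cong D$, injectivity of $D \to \Endbar_A(Y) = \End_A(Y)/\End_A^\pr(Y)$ forces $\End_A^\pr(Y) = 0$.

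For sufficiency, assume (1), (2), (3). Exactness of $F$ is immediate from (2), and the Tensor-Hom identification combined with (3) gives full faithfulness of $F$ in $\mod(A)$. The main obstacle is to upgrade this to faithfulness of $\bar F$, which amounts to showing $\Hom_A^\pr(FV, FW) = 0$ for all finitely generated $V$ and $W$; condition (1) only controls endomorphisms of $Y$, and we must propagate it throughout the image.

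I would do this in two moves. First, consider the subfunctor $T$ of the identity on $\mod(D)$ given by $T(W) := \Hom_A^\pr(Y, FW)$, viewed as a $D$-submodule of $W$ via the unit isomorphism (3); condition (1) reads $T(D) = 0$, and since $T$ commutes with finite direct sums one gets $T(D^n) = 0$ for all $n$. The crucial step is that $T$ preserves surjections: given $\pi : V \twoheadrightarrow W$ in $\mod(D)$ and $w \in T(W)$ realised by a factorisation $Y \xrightarrow{a} P \xrightarrow{b} FW$ through an $A$-projective $P$, projectivity of $P$ lifts $b$ along $F\pi : FV \twoheadrightarrow FW$ to some $\tilde b : P \to FV$; the composite $\tilde b \circ a$ lies in $T(V)$, and naturality of the unit identifies its image under $\pi$ with $w$. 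Since every finitely generated $W$ is a quotient of some $D^n$, this forces $T = 0$. Second, for a general $f : FV \to FW$ factoring through a projective, pre-composition with a surjection $Y^m \twoheadrightarrow FV$ coming from a presentation of $V$ produces a map in $\Hom_A^\pr(Y^m, FW) = T(W)^m = 0$, hence $f = 0$.

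Once $\bar F$ is fully faithful, its essential image $\CD \subseteq \modbar(A)$ is abelian and equivalent to $\mod(D)$. The distinguished property is then automatic: any short exact sequence in $\CD$ corresponds via the equivalence to one in $\mod(D)$, which $F$ sends to a short exact sequence in $\mod(A)$ by (2), and Happel's construction then yields the required exact triangle in $\modbar(A)$.
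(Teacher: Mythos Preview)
Your proof is correct and follows essentially the same route as the paper. The only cosmetic difference is in the key step showing $\Hom_A^\pr(FV,FW)=0$: the paper's Lemma~2.3 handles source and target simultaneously (lift $P\to FW$ through a surjection $Y^n\twoheadrightarrow FW$ and precompose with $Y^m\twoheadrightarrow FV$ in one sweep), whereas you split this into two moves (first kill $\Hom_A^\pr(Y,FW)$ via your functor $T$, then precompose with $Y^m\twoheadrightarrow FV$); the underlying lifting argument is identical.
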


We state some parts of the proof of Theorem \ref{full-embeddings} as 
separate lemmas in slightly greater generality. 

\begin{Lemma} \label{detectLemma}
Let $A$ be a finite-dimensional selfinjective $k$-algebra. Let 
$\CD$ be a full abelian subcategory of $\mod(A)$ such that
$\Hom_A^\pr(U,V)=\{0\}$ for all $A$-modules $U$, $V$ in $\CD$.
Then the image of $\CD$ in $\modbar(A)$ is a distinguished abelian
subcategory of $\modbar(A)$, which as an abelian category, is
equivalent to $\CD$.
\end{Lemma}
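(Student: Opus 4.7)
The plan is to split the verification into three steps: first, to show that the canonical functor $F : \CD \to \modbar(A)$, obtained by composing the full inclusion $\CD \hookrightarrow \mod(A)$ with the projection $\mod(A) \to \modbar(A)$, is fully faithful; second, to identify the full subcategory of $\modbar(A)$ on the objects of $\CD$ as a full abelian subcategory equivalent to $\CD$; and third, to verify the distinguished property using Happel's construction of the triangulated structure recalled in the Notation.

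The first step is immediate from the hypothesis. By definition of $\Hombar_A$, the assumption $\Hom_A^\pr(U,V) = 0$ for all $U$, $V$ in $\CD$ is precisely the statement that the canonical surjection $\Hom_A(U,V) \to \Hombar_A(U,V)$ is an isomorphism on $\CD$. Since the inclusion $\CD \hookrightarrow \mod(A)$ is full, this yields that $F$ induces a bijection on morphism spaces, hence is fully faithful. The full subcategory of $\modbar(A)$ on the objects of $\CD$ is then equivalent to $\CD$ via $F$, and so inherits an abelian structure from $\CD$; in particular it is additive because finite biproducts computed in $\CD$ agree with those computed in $\mod(A)$, hence with those in $\modbar(A)$.

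For the distinguished property, let $0 \to X \to Y \to Z \to 0$ be a short exact sequence in $\CD$. Reading ``full abelian subcategory of $\mod(A)$'' in the natural way, so that kernels and cokernels in $\CD$ are computed in $\mod(A)$, this is also a short exact sequence in $\mod(A)$. Happel's construction of the triangulated structure on $\modbar(A)$ then provides a morphism $h : Z \to \Sigma(X)$ such that $X \to Y \to Z \to \Sigma(X)$ is an exact triangle in $\modbar(A)$, giving exactly the condition of Definition \ref{distDef}.

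I do not expect any substantial obstacle. The only delicate point is really one of interpretation: the argument depends on reading ``full abelian subcategory of $\mod(A)$'' as meaning that the abelian structure on $\CD$ is induced from $\mod(A)$, so that short exact sequences in $\CD$ remain short exact in $\mod(A)$. With this (standard) convention fixed, the proof reduces to the observation that the projective-morphism hypothesis kills the only potential source of discrepancy between $\CD$ viewed in $\mod(A)$ and in $\modbar(A)$, combined with Happel's theorem.
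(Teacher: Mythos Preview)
Your proof is correct and follows essentially the same approach as the paper's: fullness plus the vanishing of $\Hom_A^\pr$ gives a fully faithful functor to $\modbar(A)$, and short exact sequences in $\CD$ remain exact in $\mod(A)$, hence yield exact triangles by Happel's construction. Your explicit discussion of the interpretation of ``full abelian subcategory'' is a helpful clarification that the paper leaves implicit.
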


\begin{proof}
The fact that $\CD$ is a full subcategory of $\mod(A)$, together
with the hypothesis $\Hom_A^\pr(U,V)=\{0\}$ for all $U$, $V$ in $\CD$,
implies that the image of $\CD$ in $\modbar(A)$ is a full subcategory
of $\modbar(A)$ which is equivalent to $\CD$. By the assumptions on
$\CD$, exact sequences in $\CD$ remain exact in $\mod(A)$. 
Since distinguished triangles in $\modbar(A)$ are induced by short 
exact sequences in $\mod(A)$, it follows that the image of $\CD$ in 
$\modbar(A)$ is a distinguished abelian subcategory.
\end{proof}

\begin{Lemma} \label{Homprzero1}
Let $A$ be a finite-dimensional $k$-algebra and $Y$ a finitely
generated $A$-module such that $\End_A^\pr(Y)=$ $\{0\}$. Set $D=$ 
$\End_A(Y)^\op$. The following hold.

\begin{enumerate}
\item[{\rm (i)}]
Let $m$, $n$ be positive integers and let $U$, $V$ be quotients of the
$A$-modules $Y^m$, $Y^n$, respectively. Then $\Hom_A^\pr(U,V)=$
$\{0\}$. 

\item[{\rm (ii)}]
For any two finitely generated $D$-modules $M$, $N$ we have
$\Hom_A^\pr(Y\ten_{D} M, Y\ten_{D} N) = \{0\}$.
\end{enumerate}
\end{Lemma}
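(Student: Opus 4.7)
The plan is to prove (i) by a projective-lifting argument that reduces factorisations $U \to V$ through a projective to factorisations $Y \to Y$ through a projective, and then to deduce (ii) as an immediate consequence by presenting any finitely generated $D$-module as a quotient of a free module $D^k$.

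For part (i), fix surjections $\pi_U \colon Y^m \twoheadrightarrow U$ and $\pi_V \colon Y^n \twoheadrightarrow V$ witnessing that $U$ and $V$ are quotients of the indicated powers of $Y$. Suppose $f \colon U \to V$ factors as $f = \beta \alpha$ with $\alpha \colon U \to P$ and $\beta \colon P \to V$ for some projective $A$-module $P$. Since $P$ is projective, I lift $\beta$ through $\pi_V$ to a morphism $\tilde\beta \colon P \to Y^n$ satisfying $\pi_V \tilde\beta = \beta$. Setting $\varphi = \tilde\beta \alpha \pi_U \colon Y^m \to Y^n$, one then has $\pi_V \varphi = f \pi_U$, and $\varphi$ factors through the projective $P$ by construction. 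Using the direct-sum decompositions $Y^m = \bigoplus_{i=1}^m Y$ and $Y^n = \bigoplus_{j=1}^n Y$, the morphism $\varphi$ is encoded by a matrix of endomorphisms $Y \to Y$, each of which still factors through $P$ (by pre- and post-composition with an inclusion and a projection), hence through a projective module. The hypothesis $\End_A^\pr(Y) = \{0\}$ forces every such entry to vanish, so $\varphi = 0$. Consequently $f \pi_U = 0$, and surjectivity of $\pi_U$ gives $f = 0$.

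For part (ii), given finitely generated $D$-modules $M$ and $N$, choose positive integers $m, n$ and surjections $D^m \twoheadrightarrow M$ and $D^n \twoheadrightarrow N$. Right exactness of $Y \tenD -$ then exhibits $Y \tenD M$ and $Y \tenD N$ as quotients of $Y \tenD D^m \cong Y^m$ and $Y \tenD D^n \cong Y^n$ respectively, so part (i) applies. The main technical point of the whole argument is the projective lift in (i); once factorisations through a projective are translated into matrix entries $Y \to Y$ that still factor through a projective, the assumption $\End_A^\pr(Y) = \{0\}$ closes the argument and the reduction in (ii) is formal.
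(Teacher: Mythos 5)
Your proof is correct and follows essentially the same route as the paper: lift the map $P \to V$ through the surjection $Y^n \twoheadrightarrow V$, precompose to get a morphism $Y^m \to Y^n$ factoring through $P$, kill it using $\End_A^{\mathrm{pr}}(Y) = \{0\}$ (you spell out the matrix-entry reduction which the paper leaves implicit), and conclude $f = 0$ from surjectivity of $Y^m \twoheadrightarrow U$. For (ii) you use a free $D$-presentation $D^m \twoheadrightarrow M$ and right exactness of $Y \otimes_D -$, whereas the paper uses the canonical surjection $Y \otimes_k M \twoheadrightarrow Y \otimes_D M$ with $Y \otimes_k M \cong Y^{\dim_k M}$; these are interchangeable devices for exhibiting $Y \otimes_D M$ as a quotient of a power of $Y$, so the two proofs agree in substance.
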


\begin{proof}
With the assumptions in (i), there are surjective 
$A$-homomorphisms $\alpha : Y^m\to$ $U$ and $\beta : Y^n\to$ $V$. Let 
$\psi : U\to$ $V$ be an $A$-homomorphism which factors through a 
projective $A$-module $P$. Let $\gamma : U\to$ $P$ and $\delta : P\to$ 
$V$ be $A$-homomorphism such that $\psi =$ $\delta\circ\gamma$. Since 
$P$ is projective and $\beta$ is surjective, there is an 
$A$-homomorphism $\epsilon : P\to$ $Y^n$ such that $\beta\circ\epsilon=$ 
$\delta$. Note that the homomorphism 
$\epsilon\circ\gamma\circ\alpha : Y^m\to$ $Y^n$ factors through $P$, 
hence is zero by the assumptions on $Y$. Thus $\psi\circ\alpha=$ 
$\delta\circ\gamma\circ\alpha=$ 
$\beta\circ\epsilon\circ\gamma\circ\alpha=$ $0$, and hence $\psi=0$ as 
$\alpha$ is surjective. This shows that $\Hom^\pr_A(U,V)=$ $\{0\}$ as 
stated. Let $M$, $N$ be as in (ii). As a left $A$-module, we have 
$Y\tenk M\cong$ $Y^m$, where $m=$ $\dim_k(M)$, and we have a canonical 
surjection of $A$-modules $Y\tenk M\to$ $Y\ten_{D} M$; similarly for 
$Y\tenk N$. Thus (ii) is a special case of (i). 
\end{proof}

The following observation is well-known (see the papers \cite{AlperinStatic} 
and \cite{NaumanStatic1} on static and adstatic modules). We include a 
short proof for convenience.

\begin{Lemma} \label{embedLemma1}
Let $A$, $D$ be finite-dimensional $k$-algebras, and let $Y$ be a 
finitely generated $A$-$D$-bimodule. 
The functor $Y\ten_D - : \mod(D)\to$ $\mod(A)$ is a full $k$-linear 
embedding  if and only if the adjunction unit $V\to$ 
$\Hom_A(Y,Y\tenD V)$, $v\mapsto (y\mapsto y\ten v)$ is an isomorphism, 
for every finitely generated $D$-module $V$.
\end{Lemma}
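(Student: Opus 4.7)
The plan is to recognise the lemma as a special case of the standard categorical fact that the left adjoint in an adjoint pair is fully faithful if and only if the unit of the adjunction is a natural isomorphism. In our situation, $F = Y \tenD -$ is left adjoint to $G = \Hom_A(Y, -)$ via the Tensor-Hom adjunction, and the map $V \to \Hom_A(Y, Y \tenD V)$, $v \mapsto (y \mapsto y \otimes v)$, described in the statement is precisely the unit $\eta_V$ of this adjunction at $V$.

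Rather than invoke the general principle, I would spell it out directly. The Tensor-Hom adjunction isomorphism
\[
\Phi_{V,M} : \Hom_A(Y \tenD V, M) \ \xrightarrow{\ \cong\ }\ \Hom_D(V, \Hom_A(Y, M)),
\]
sending $\phi$ to $v \mapsto (y \mapsto \phi(y \otimes v))$, is natural in both $V$ and $M$. Specialising to $M = Y \tenD W$, a direct computation from the definitions shows that $\Phi_{V,\, Y\tenD W}$ sends $\mathrm{id}_Y \otimes f$ to $\eta_W \circ f$, for every $D$-homomorphism $f : V \to W$. Thus the action of $F$ on $\Hom_D(V,W)$ factors as the inverse of the adjunction isomorphism applied to post-composition by $\eta_W$.

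Since $\Phi_{V,\,Y\tenD W}$ is bijective for all $V$, it follows that $F$ induces a bijection $\Hom_D(V, W) \to \Hom_A(FV, FW)$ for all $V$ if and only if post-composition with $\eta_W$ yields a bijection $\Hom_D(V, W) \to \Hom_D(V, \Hom_A(Y, Y \tenD W))$ for all $V$, which by the Yoneda lemma is equivalent to $\eta_W$ being an isomorphism; letting $W$ vary gives the desired equivalence. The only substantive step is the explicit compatibility $\Phi(\mathrm{id}_Y \otimes f) = \eta_W \circ f$, which is a line of diagram chasing. Everything else is categorical formality, and the proof uses nothing specific to the selfinjectivity of $A$ or the finite-dimensionality of $D$ beyond ensuring that the categories $\mod(A)$ and $\mod(D)$ make sense.
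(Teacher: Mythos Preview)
Your proof is correct and takes essentially the same approach as the paper: both use the Tensor-Hom adjunction to identify the action of $Y\tenD-$ on $\Hom_D(U,V)$ with post-composition by the unit $\eta_V$, and then conclude via Yoneda (the paper phrases this last step as specialising to $U=D$, which amounts to the same thing). Your write-up is slightly more explicit about the compatibility $\Phi(\Id_Y\otimes f)=\eta_W\circ f$, but the argument is the same.
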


\begin{proof}
The functor $Y\tenD-$ is a full embedding 
if and only if for any two finitely generated $D$-modules $U$, $V$, the map 
$\Hom_D(U,V) \to \Hom_A(Y\tenD U, Y\tenD V)$
induced by $Y\tenD-$ is an isomorphism, hence if and only if the canonical
map $\Hom_D(U,V)\to$ $\Hom_D(U,\Hom_A(Y,Y\tenD V))$ is an isomorphism.
 By considering the case $U=D$, one sees that this is the
case if and only if the adjunction map $V\to\Hom_A(Y,Y\tenD V)$ itself
is an isomorphism, whence the result.
\end{proof}

\begin{proof}[{Proof of Theorem \ref{full-embeddings}}]
By Lemma \ref{embedLemma1}, the functor 
$Y\tenD- : \mod(D)\to \mod(A)$ is a full embedding, if and only if 
the condition (3) holds. This embedding is exact if and only
if $Y$ is flat as a right $D$-module. Since $Y$ is finitely generated,
this is equivalent with requiring condition (2). 
It follows from the Lemmas \ref{detectLemma} and \ref{Homprzero1}
that the composition with the canonical functor $\mod(A)\to$ 
$\modbar(A)$ yields an embedding of $\mod(D)$ as a distinguished
abelian subcategory in $\modbar(A)$ if and only if (1) holds as well.
This concludes the proof of Theorem \ref{full-embeddings}.
\end{proof}

If both $A$ and $D$ are selfinjective, then Theorem 
\ref{full-embeddings} yields the following result.

\begin{Theorem} \label{modDmodA}
Let $A$ be a finite-dimensional selfinjective $k$-algebra. 
Let $Y$ be a finitely generated $A$-module. Suppose that
$\End_A(Y)$ is selfinjective. Set $D=$ $\End_A(Y)^\op$, and regard
$Y$ as an $A$-$D$-bimodule. The following are equivalent.

\begin{enumerate}
\item[{\rm (i)}]
The functor $Y\tenD- : \mod(D)\to$ $\mod(A)$ is a full exact embedding 
and induces an embedding of $\mod(D)$ as a distinguished abelian 
subcategory in $\modbar(A)$.

\item[{\rm (ii)}]
We have $\End^\pr_A(Y)=$ $\{0\}$, and $Y$ is projective as an
$\End_A(Y)$-module.
\end{enumerate}

\end{Theorem}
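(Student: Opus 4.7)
The plan is to apply Theorem \ref{full-embeddings}. Since $D=\End_A(Y)^\op$, a right $D$-module is the same thing as a left $\End_A(Y)$-module, so condition (ii) of the present statement is exactly the pair of conditions (1) and (2) in Theorem \ref{full-embeddings}. The implication (i) $\Rightarrow$ (ii) is therefore immediate, and the substance is to prove that under the standing assumption that $\End_A(Y)$, and hence $D$, is selfinjective, condition (3) of Theorem \ref{full-embeddings} follows automatically from (ii).

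To verify (3), I would first confirm that the adjunction unit $\eta_V\colon V\to\Hom_A(Y,Y\tenD V)$ is an isomorphism when $V=D$: explicitly, $\eta_D(d)$ is right multiplication by $d$ on $Y$, which under the antiisomorphism $\End_A(Y)^\op\cong D$ recovers $d$ itself. By additivity, $\eta_{D^n}$ is an isomorphism for every $n$, and a standard summand argument then yields that $\eta_P$ is an isomorphism for every finitely generated projective $D$-module $P$.

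For an arbitrary finitely generated $D$-module $V$, selfinjectivity of $D$ allows us to form an exact sequence $0\to V\to I^0\to I^1$ with $I^0$, $I^1$ finitely generated injective, and hence projective, $D$-modules (embed $V$ into its injective envelope and iterate on the cokernel). Applying the exact functor $Y\tenD-$ (exact by (2)) followed by the left exact functor $\Hom_A(Y,-)$ yields a commutative diagram
\[
\xymatrix{
0 \ar[r] & V \ar[r] \ar[d]^{\eta_V} & I^0 \ar[r] \ar[d]^{\eta_{I^0}} & I^1 \ar[d]^{\eta_{I^1}} \\
0 \ar[r] & \Hom_A(Y,Y\tenD V) \ar[r] & \Hom_A(Y,Y\tenD I^0) \ar[r] & \Hom_A(Y,Y\tenD I^1)
}
\]
with left exact rows, in which $\eta_{I^0}$ and $\eta_{I^1}$ are isomorphisms by the previous paragraph. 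A short diagram chase (injectivity and surjectivity of $\eta_V$ in turn) then forces $\eta_V$ to be an isomorphism, so (3) holds and (i) follows.

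The main obstacle is condition (3). While $\eta_V$ is an isomorphism for $V$ free (and hence projective) by direct computation, extending this to arbitrary finitely generated $V$ requires an injective copresentation whose terms are themselves projective, i.e., modules on which the unit is already known to be iso. This coincidence is exactly what the selfinjectivity hypothesis on $\End_A(Y)$ delivers, and is what makes the diagram chase go through.
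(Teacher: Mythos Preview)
Your proof is correct and follows essentially the same approach as the paper: both directions reduce to Theorem \ref{full-embeddings}, and for (ii) $\Rightarrow$ (i) the paper verifies condition (3) via the same diagram chase, embedding $V$ into a free module $D^n$ (using selfinjectivity of $D$) to get an exact sequence $0\to V\to D^n\to D^m$ rather than your injective copresentation $0\to V\to I^0\to I^1$. The difference is purely cosmetic.
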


\begin{proof}
If (i) holds, then (ii) holds by Theorem \ref{full-embeddings}.
Suppose that (ii) holds. Note that the hypotheses imply that $D=$ 
$\End_A(Y)^\op$ is selfinjective and that $Y$ is projective as a right 
$D$-module.
Thus the conditions (1) and (2) in Theorem \ref{full-embeddings} are
satisfied. We need to show that condition (3) in that Theorem holds
as well. That is, given a finitely generated $D$-module $V$, we need
to show that the adjunction map $V\to$ $\Hom_A(Y,Y\tenD V)$ is an
isomorphism. Note that this is clear if $V=D$ as a consequence of
the assumption $D=$ $\End_A(Y)^\op$. Thus this is the case for $V$
any free $D$-module of finite rank. In general, since $D$ is 
selfinjective, $V$ is isomorphic to a submodule of a free $D$-module
of finite rank. Thus there is an exact sequence of $D$-modules of the
form
$$\xymatrix{0 \ar[r] & V\ar[r] & D^n \ar[r] & D^m }$$
for some positive integers $n$ and $m$. 
By the hypotheses in (ii), the functor $Y\tenD-$ is exact, and hence
we have an exact sequence of $A$-modules of the form
$$\xymatrix{0 \ar[r] & Y\tenD V\ar[r] & Y\tenD D^n \ar[r] 
& Y\tenD D^m }$$
Since the functor $\Hom_A(Y,-)$ is left exact, this yields an exact
sequence of $D$-modules of the form 
$$\xymatrix{0 \ar[r] & \Hom_A(Y, Y\tenD V)\ar[r] 
& \Hom_A(Y,Y\tenD D^n) \ar[r] & \Hom_A(Y, Y\tenD D^m) }$$
By naturality of the adjunction maps, we get a commutative diagram
of $D$-modules with exact rows
$$\xymatrix{0 \ar[r] & V\ar[r] \ar[d]_{\alpha} 
& D^n \ar[r] \ar[d]^{\beta} & D^m \ar[d]^{\gamma} \\ 
0 \ar[r] & \Hom_A(Y, Y\tenD V)\ar[r] 
& \Hom_A(Y,Y\tenD D^n) \ar[r] & \Hom_A(Y, Y\tenD D^m) }$$
where $\alpha$, $\beta$, $\gamma$ are the adjunction maps. By the
above remarks, $\beta$ and $\gamma$ are isomorphisms. The exactness
of the rows implies that $\alpha$ is an isomorphism as well.
This shows that condition (3) in Theorem \ref{full-embeddings} holds
as well, and hence the result follows from Theorem
\ref{full-embeddings}.
\end{proof}

Using a Theorem of Cabanes \cite[Theorem 2]{CabAst} one can identify 
the image of the functor $Y\tenD-$ in Theorem \ref{modDmodA}
more precisely; see Section \ref{CabSection2}. 

If $A$ is a Frobenius algebra over an algebraically closed field, then 
the ideals $I$ containing their right annihilators form subvarieties of 
certain Grassmannians.

\begin{Proposition} \label{AmodIembedding2}
Let $A$ be a finite-dimensional Frobenius algebra over $k$. Suppose that 
$k$ is algebraically closed. The set of proper ideals $I$ in $A$
satisfying $r(I)\subseteq$ $I$ is a projective variety whose connected 
components are subvarieties of the Grassmannians $\Gr(n,A)$, where 
$\frac{\dim_k(A)}{2}\leq n < \dim_k(A)$. 
\end{Proposition}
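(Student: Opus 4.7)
The plan is to realise the given set as a finite disjoint union of closed subvarieties of Grassmannians. Since $A$ is Frobenius, $\dim_k(I) + \dim_k(r(I)) = \dim_k(A)$ for every ideal $I$, so the inclusion $r(I) \subseteq I$ immediately forces $\dim_k(I) \geq \frac{\dim_k(A)}{2}$; combined with $I$ being proper, the admissible dimensions are exactly the integers $n$ with $\frac{\dim_k(A)}{2} \leq n < \dim_k(A)$. For each such $n$ I will exhibit the set of those $V \in \Gr(n, A)$ which are two-sided ideals with $r(V) \subseteq V$ as a closed subvariety of $\Gr(n, A)$. The set in the statement is then the disjoint, hence clopen, union of these pieces over admissible $n$, so it is projective, and every connected component lies in a single $\Gr(n, A)$.

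The requirement that $V \in \Gr(n, A)$ be a two-sided ideal is standard: for each $a$ in a fixed basis of $A$, the vanishing of the composition $V \hookrightarrow A \xrightarrow{a \cdot} A \twoheadrightarrow A/V$, regarded on the tautological subbundle over $\Gr(n,A)$, defines a closed subvariety, and similarly for right multiplication. Intersecting the finitely many resulting loci yields a closed subvariety $\CI_n \subseteq \Gr(n, A)$ consisting of the $n$-dimensional ideals of $A$.

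To encode the condition $r(V) \subseteq V$ geometrically, fix a Frobenius form $\lambda : A \to k$ and put $\langle a, b \rangle := \lambda(ab)$; this is a non-degenerate $k$-bilinear form on $A$, and for a subspace $V \subseteq A$ I write $V^\perp := \{ a \in A \mid \lambda(Va) = 0 \}$. When $V$ is a left ideal and $a \in V^\perp$, the subspace $Va$ is a left ideal contained in $\ker(\lambda)$; since $\ker(\lambda)$ contains no nonzero one-sided ideal of $A$, this forces $Va = 0$, so $a \in r(V)$. The reverse inclusion $r(V) \subseteq V^\perp$ is trivial, hence $r(V) = V^\perp$. The assignment $V \mapsto V^\perp$ is a morphism $\Gr(n, A) \to \Gr(\dim_k(A) - n, A)$ by non-degeneracy, and the incidence locus $\{(V, W) \mid W \subseteq V\}$ is closed in the corresponding product of Grassmannians; hence the condition $V^\perp \subseteq V$, equivalently $r(V) \subseteq V$, cuts out a closed subvariety inside $\CI_n$.

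Assembling the pieces produces the desired projective variety together with the claimed description of its connected components. The only non-routine ingredient is the identification $r(V) = V^\perp$, which translates the annihilator condition into a pairing condition on the Grassmannian; the closedness of the remaining conditions and the dimension bound on $n$ are immediate.
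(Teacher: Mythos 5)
Your proof is correct and follows the same basic route as the paper: use the Frobenius dimension formula to bound $\dim_k(I)$ from below, then show that each dimension class cuts out a closed subvariety of $\Gr(n,A)$. Where the paper merely asserts that ``being an ideal with an annihilator of a fixed dimension is obviously a polynomial condition,'' you make this precise by observing that $r(I)=I^\perp$ with respect to the non-degenerate pairing $\langle a,b\rangle=\lambda(ab)$ attached to a Frobenius form $\lambda$, so that $V\mapsto V^\perp$ is a morphism of Grassmannians and $r(V)\subseteq V$ becomes the pullback of a closed incidence condition. That identification is a genuine improvement in transparency over the paper's terse justification, and the rest (closedness of the ideal locus via the tautological bundle, assembling the disjoint union over admissible $n$) is carried out cleanly.
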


\begin{proof}
If $A$ is a Frobenius algebra, then
$\dim_k(A)=$ $\dim_k(I)+\dim_k(r(I))$. Since $r(I)\subseteq$ $I$, it
follows that $\dim_k(A)\leq$ $2\dim_k(I)$. Thus the ideals satisfying 
$r(I)\subseteq$ $I$ satisfy $\frac{\dim_k(A)}{2}\leq$ $\dim_k(I)$. In 
each dimension, they form subvarieties of the Grassmannians, since being 
an ideal with an annihilator of a fixed dimension is obviously a 
polynomial condition (obtained by fixing a $k$-basis of $A$).
\end{proof}

If $A$ is symmetric, then $A$ is selfinjective, but none of the 
distinguished abelian subcategories constructed above arises as the 
heart of a $t$-structure. More precisely, we have the following result.

\begin{Proposition} \label{selfinj-not}
Let $A$ be a finite-dimensional selfinjective $k$-algebra. Denote by
$\nu$ the Nakayama functor on $\modbar(A)$. Then $\modbar(A)$ 
has no nontrivial $t$-structure which is stable under $\nu$; that is, the 
heart of any $\nu$-stable $t$-structure  on $\modbar(A)$ is zero. 
\end{Proposition}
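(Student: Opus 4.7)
The plan is to assume the heart $\CH$ of a $\nu$-stable $t$-structure $(\CC^{\leq 0},\CC^{\geq 0})$ on $\modbar(A)$ is nonzero and to derive a contradiction from the Auslander-Reiten triangle of any nonzero indecomposable object of $\CH$, using the identity $\tau=\Sigma^{-2}\circ\nu$ recalled in the Notation.

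To begin I would pick a nonzero indecomposable object $X\in\CH$. The heart of any $t$-structure is closed under direct summands in the ambient triangulated category: if $X=Y\oplus Z$ in $\modbar(A)$, then the projection idempotent is an endomorphism of $X$ in the full subcategory $\CH$, and its image in the abelian category $\CH$ recovers the summand $Y$. Such an $X$ is nonzero in $\modbar(A)$, hence non-projective as an $A$-module, so it has an Auslander-Reiten triangle
$$\xymatrix{\tau X\ar[r] & E\ar[r] & X\ar[r]^{\eta} & \Sigma\tau X}$$
in $\modbar(A)$ whose connecting morphism $\eta$ is nonzero, since AR triangles are non-split.

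Next I would exploit the $\nu$-stability. Since $\nu$ preserves both $\CC^{\leq 0}$ and $\CC^{\geq 0}$, it preserves their intersection $\CH$, so $\nu X\in\CH$. The Nakayama functor $\nu=A^{\vee}\tenA-$ is exact on $\mod(A)$ because $A^{\vee}$ is projective as a right $A$-module (by selfinjectivity of $A$), so $\nu$ commutes with the shift $\Sigma$ on $\modbar(A)$. Therefore $\Sigma\tau X=\Sigma\cdot\Sigma^{-2}\nu X=\Sigma^{-1}\nu X$ lies in $\Sigma^{-1}\CC^{\geq 0}=\CC^{\geq 1}$. Combined with $X\in\CC^{\leq 0}$, the Hom-vanishing axiom of the $t$-structure yields $\Hombar_A(X,\Sigma\tau X)=0$, contradicting $\eta\neq 0$.

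I do not expect substantial obstacles; the argument rests on three standard ingredients that need to be checked cleanly: the $\nu$-stability of the heart, the exactness of $\nu$ (which makes $\nu$ and $\Sigma$ commute on $\modbar(A)$), and the non-vanishing of the connecting morphism in an Auslander-Reiten triangle. The main conceptual content is that the two copies of $\Sigma^{-1}$ built into $\tau=\Sigma^{-2}\nu$ place $\Sigma\tau X$ one step into the strictly positive part of the $t$-structure, cutting it off from $X$.
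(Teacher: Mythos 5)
Your proof is correct, and it reaches the same conclusion by a genuinely different route. The paper argues globally: it invokes Auslander--Reiten (Tate) duality $\Hombar_A(U,\Sigma^{-1}V)^{\ast}\cong\Hombar_A(V,\nu U)$ to show that $\Hombar_A(V,\nu U)=0$ for every $U\in\CC^{\leq 0}$, $V\in\CC^{\geq 0}$, and then uses $\nu$-stability of the heart to conclude that \emph{every} morphism in the heart (in particular every identity) vanishes. Your argument instead localises the contradiction at a single indecomposable $X$ in the heart, using the Krull--Schmidt property and the existence of an Auslander--Reiten triangle $\tau X\to E\to X\xrightarrow{\eta}\Sigma\tau X$ with $\eta\neq 0$, and then shows $\Hombar_A(X,\Sigma\tau X)=0$ because $\Sigma\tau X=\Sigma^{-1}\nu X\in\CC^{\geq 1}$. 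The two arguments rest on the same underlying phenomenon, since AR duality for selfinjective algebras is exactly what produces the AR triangles you use; the paper's version is marginally more economical because it does not need idempotent completeness of the heart, Krull--Schmidt, or the existence of AR triangles as separate ingredients, whereas yours makes the conceptual point more vivid by exhibiting a concrete non-split triangle that the $t$-structure axioms forbid. Your subsidiary claims (closure of the heart under summands, exactness of $\nu=A^{\vee}\tenA-$ and its compatibility with $\Sigma$, non-vanishing of the AR connecting map) are all correct as stated.
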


\begin{proof}
Let $(\CC^{\leq 0}, \CC^{\geq 0})$ be a $t$-structure on $\CC=$ 
$\modbar(A)$. Suppose that this $t$-structure is preserved by the
Nakayama functor $\nu$. For any $A$-module $U$ in $\CC^{\leq 0}$ and any 
$A$-module $V$ in $\CC^{\geq 0}$ we have $\Hombar_A(U,\Sigma^{-1}(V))=$ 
$\{0\}$. Auslander-Reiten duality for selfinjective algebras yields a duality  
between the space $\Hombar(U,\Sigma^{-1}(V))\cong$ 
$\Hombar_A(\nu(U), \nu(\Sigma^{-1}(V)))$ and
$\Hombar_A(V,\nu(U))$ (see e. g. \cite[Ch. III, Theorem 6.3]{SkYaI}). 
Thus $\Hombar_A(V,\nu(U))=$ $\{0\}$. Since 
the heart $\CC^{\leq 0}\cap\CC^{\geq 0}$ of 
the $t$-structure is $\nu$-stable, it follows that all morphisms in the
heart of this $t$-structure are zero. 
\end{proof}

\begin{Corollary} \label{sym-not}
Let $A$ be a finite-dimensional symmetric $k$-algebra. Then $\modbar(A)$ 
has no nontrivial $t$-structure; that is, the heart of any $t$-structure 
on $\modbar(A)$ is zero. 
\end{Corollary}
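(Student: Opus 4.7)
The plan is to deduce this immediately from Proposition \ref{selfinj-not} by observing that the hypothesis of $\nu$-stability is automatic when $A$ is symmetric. Recall from the Notation section that when $A$ is symmetric, the bimodule isomorphism $A \cong A^\vee$ implies that the Nakayama functor $\nu = A^\vee \tenA -$ is isomorphic to the identity functor on $\mod(A)$, and hence induces a functor isomorphic to the identity on the stable category $\modbar(A)$.

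Given this, the first step is simply to let $(\CC^{\leq 0}, \CC^{\geq 0})$ be an arbitrary $t$-structure on $\modbar(A)$. Since $\nu$ is isomorphic to the identity functor, the subcategories $\CC^{\leq 0}$ and $\CC^{\geq 0}$ are trivially stable under $\nu$ (stability under a functor isomorphic to the identity is equivalent to stability under the identity, which is vacuous). Hence the given $t$-structure is $\nu$-stable.

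The second and final step is to invoke Proposition \ref{selfinj-not}, which says precisely that every $\nu$-stable $t$-structure on $\modbar(A)$ has zero heart. Since this applies to an arbitrary $t$-structure on $\modbar(A)$, every $t$-structure on $\modbar(A)$ has zero heart, giving the claim.

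There is essentially no obstacle here; the entire content has been packaged into Proposition \ref{selfinj-not}, and the role of the symmetric hypothesis is only to guarantee that the Nakayama functor is isomorphic to the identity so that $\nu$-stability comes for free. The only thing to be slightly careful about is that the Nakayama functor is only defined up to natural isomorphism, but since $t$-structure subcategories are closed under isomorphism, this causes no issue.
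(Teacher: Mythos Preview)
Your proof is correct and follows exactly the same approach as the paper: observe that for a symmetric algebra the Nakayama functor is isomorphic to the identity, so every $t$-structure is automatically $\nu$-stable, and then apply Proposition \ref{selfinj-not}.
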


\begin{proof}
The Nakayama functor of a symmetric algebra is isomorphic to the identity
functor, and hence the statement is a special case of Proposition
\ref{selfinj-not}.
\end{proof}

Note that this Corollary follows also from more general results on negative Calabi-Yau
triangulated categories in \cite[\S 5.1]{HJY}, combined with Tate duality for symmetric
algebras (Tate duality for symmetric algebras  is the specialisation of the aforementioned 
Auslander-Reiten duality to the case where the Nakayama functor is isomorphic to the 
identity). 
Since finite group algebras are symmetric, this
implies in particular the second statement of Theorem \ref{kGmodN}:

\begin{Corollary} \label{kG-sym-not} 
Let $G$ be a finite group. Then $\modbar(kG)$ has no 
nontrivial $t$-structure; that is, the heart of any $t$-structure on
$\modbar(kG)$ is zero. 
\end{Corollary}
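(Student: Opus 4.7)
The plan is to observe that this corollary is an immediate specialisation of Corollary \ref{sym-not}, once one recalls that every finite group algebra is a symmetric algebra. More precisely, for any finite group $G$ and any field $k$, the $k$-linear map $s : kG \to k$ defined by sending $\sum_{g\in G} \lambda_g g$ to the coefficient $\lambda_1$ of the identity element is a symmetrising form: the associated bilinear form $(x,y) \mapsto s(xy)$ is symmetric and nondegenerate, since the basis $\{g\}_{g\in G}$ is dual (up to reindexing by $g\mapsto g^{-1}$) to itself. This exhibits a bimodule isomorphism $kG \cong (kG)^\vee$, so $kG$ is symmetric in the sense recalled in the Notation.

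Given this, the Nakayama functor of $kG$ is isomorphic to the identity functor on $\mod(kG)$, hence also on $\modbar(kG)$, so every $t$-structure on $\modbar(kG)$ is automatically $\nu$-stable. Proposition \ref{selfinj-not} (or, equivalently, its Corollary \ref{sym-not}) then forces the heart of any such $t$-structure to be zero. There is no real obstacle here beyond invoking the fact that $kG$ is symmetric; if $\chr(k)\nmid |G|$ then $\modbar(kG)=0$ and the conclusion is vacuous, so the content of the statement lies entirely in the modular case, where it follows at once from Corollary \ref{sym-not}.
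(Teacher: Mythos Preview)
Your proof is correct and follows exactly the paper's approach: the paper simply remarks that finite group algebras are symmetric and invokes Corollary \ref{sym-not}. Your added detail about the explicit symmetrising form and the vacuous non-modular case is fine but not required.
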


\begin{Remark} \label{Pic-embedding-Remark}
With the notation of Theorem \ref{full-embeddings}, not every
embedding of $\mod(D)$ as a distinguished abelian subcategory
of $\modbar(A)$ lifts in general to a full embedding $\mod(D)\to$
$\mod(A)$. Suppose that $Y\tenD- : \mod(D)\to$ $\mod(A)$ is a full 
exact embedding and induces an embedding $\mod(D)\to$ $\modbar(A)$ as 
distinguished abelian subcategory. Let $M$ be an $A$-$A$-bimodule 
inducing a stable equivalence of Morita type on $A$. Then the functor 
$M\tenA Y\tenD- : \mod(D) \to$ $\mod(A)$ is exact but no longer
necessarily full. It induces still an embedding of $\mod(D)$ as a 
distinguished abelian subcategory, because the functor $M\tenA-$
induces a triangulated equivalence on $\modbar(A)$, hence permutes
distinguished abelian subcategories.
It is not clear whether an embedding $\mod(D)\to$ $\modbar(A)$ as a 
distinguished abelian subcategory is necessarily induced by tensoring 
with a suitable $A$-$D$-bimodule. 
\end{Remark} 

\begin{Remark} \label{derived-embed}
Let $A$ be a finite-dimensional selfinjective $k$-algebra, $D$ a 
finite-dimensional $k$-algebra, and $Y$ an $A$-$D$-bimodule which is 
finitely generated projective as a right $D$-module. Then the functor 
$\mod(D)\to$ $\modbar(A)$ induced by $Y\tenD-$ extends to a
functor of triangulated categories $D^b(\mod(D))\to$ $\modbar(A)$.
Indeed, since $Y$ is finitely generated projective as a right $D$-module, 
it follows that $Y\tenD-$ induces a functor $D^b(\mod(D))\to$ 
$D^b(\mod(A))$. Composed with the canonical functor
$D^b(\mod(A))\to$ $\modbar(A)$ from \cite[Theorem 2.1]{Rick} or 
\cite[Theorem 4.4.1]{Buch}, this yields a 
functor $D^b(\mod(D))\to$ $\modbar(A)$ as stated. 
\end{Remark}

%%%%%%%%%%%%%%%%%%%%%%%%%%%%%%%%%%%%%%%%%%%%%%%%%%%%%%%%%%%%%%%%%%%%%%%
\section{Simple modules in distinguished abelian subcategories} 
\label{simplestablemoduleSection}

We consider in this section distinguished abelian subcategories of 
$\modbar(A)$ whose simple objects are simple $A$-modules, where
$A$ is a finite-dimensional selfinjective $k$-algebra.
The first theorem is essentially a special case of Theorem 
\ref{full-embeddings}, and it implies Theorem 
\ref{AmodIembedding-intro} (i), describing those distinguished
abelian subcategories which arise from quotients of $A$. 

\begin{Theorem} \label{AmodIembedding1}
Let $A$ be a finite-dimensional selfinjective $k$-algebra
and let $I$ be a proper ideal in $A$. The following statements are
equivalent.

\begin{enumerate}
\item[{\rm (i)}]
The composition of canonical functors $\mod(A/I)\to \mod(A)\to 
\modbar(A)$ is an embedding of $\mod(A/I)$ as a distinguished abelian 
subcategory in $\modbar(A)$.

\item[{\rm (ii)}]
The ideal $I$ contains its right annihilator $r(I)$. 

\item[{\rm (iii)}]
We have $\End^\pr_A(A/I)=\{0\}$. 

\item[{\rm (iv)}]
For any two finitely generated $A/I$-modules $U$, $V$, we have 
$\Hom^\pr_A(U,V)=$ $\{0\}$.

\item[{\rm (v)}]
We have $r(I)^2=\{0\}$.
\end{enumerate}
\end{Theorem}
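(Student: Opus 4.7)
The plan is to separate the equivalences into two blocks: the categorical equivalences (i) $\Leftrightarrow$ (iv) $\Leftrightarrow$ (iii), which will follow directly from the lemmas already established, and the ring-theoretic equivalences (iii) $\Leftrightarrow$ (ii) $\Leftrightarrow$ (v), which I would extract by direct computation using Nakayama duality for selfinjective algebras.

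For the categorical block I would start by observing that every object in $\mod(A/I)$ is an $A$-module annihilated by $I$, so $\Hom_{A/I}(U,V) = \Hom_A(U,V)$ for any $U$, $V$ in $\mod(A/I)$. Consequently the canonical functor $\mod(A/I) \to \modbar(A)$ is fully faithful if and only if $\Hom_A^\pr(U,V) = \{0\}$ for all such $U$, $V$, giving (i) $\Rightarrow$ (iv); and the converse (iv) $\Rightarrow$ (i) is then Lemma \ref{detectLemma} applied to the image $\CD$ of $\mod(A/I)$ in $\mod(A)$. Next, (iv) $\Rightarrow$ (iii) is trivial by setting $U = V = A/I$, while (iii) $\Rightarrow$ (iv) is Lemma \ref{Homprzero1}(i) with $Y = A/I$, since every finitely generated $A/I$-module is a quotient of $(A/I)^n$ for some $n$. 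This block could equivalently be packaged by invoking Theorem \ref{full-embeddings} with $D = A/I$ and $Y = A/I$, in which conditions (2) and (3) are automatic because $A/I$ is free of rank one over itself.

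For (iii) $\Leftrightarrow$ (ii) I would compute $\End_A^\pr(A/I)$ explicitly. Any projective $A$-module is a summand of some $A^n$, so an arbitrary $\phi \in \End_A^\pr(A/I)$ factors as $A/I \to A^n \to A/I$; the first map is determined by its value $(a_1, \dots, a_n)$ on $\bar 1$, with each $a_j \in r(I)$, and the second by the images $b_j + I$ of the standard basis. The composite sends $\bar 1$ to $\sum_j a_j b_j + I$. Since $r(I)$ is a two-sided ideal in $A$, the set of all such sums equals $r(I)$, yielding an isomorphism $\End_A^\pr(A/I) \cong (r(I) + I)/I$ via $\phi \mapsto \phi(\bar 1)$, which vanishes precisely when $r(I) \subseteq I$.

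Finally, for (ii) $\Leftrightarrow$ (v): if $r(I) \subseteq I$ then $r(I)^2 \subseteq I \cdot r(I) = \{0\}$ by the very definition of $r(I)$; conversely $r(I)^2 = \{0\}$ says $r(I) \subseteq l(r(I))$, and since $A$ is selfinjective, Nakayama's duality gives $l(r(I)) = I$, hence $r(I) \subseteq I$. The step I expect to require the most care is the identification $\End_A^\pr(A/I) \cong (r(I) + I)/I$: one must verify that allowing arbitrary factorisation lengths $n$ does not produce composites outside $r(I) + I$, and this depends precisely on $r(I)$ being a two-sided ideal and hence absorbing right multiplication by $A$. Everything else reduces to a transparent application of Lemma \ref{detectLemma}, Lemma \ref{Homprzero1}, and the Nakayama correspondence.
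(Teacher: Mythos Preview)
Your proposal is correct and follows essentially the same route as the paper. The paper packages the equivalence (i) $\Leftrightarrow$ (iii) via Theorem~\ref{full-embeddings} (which you mention as an alternative), and it isolates the ring-theoretic equivalences (ii) $\Leftrightarrow$ (iii) and (ii) $\Leftrightarrow$ (v) as the separate Lemmas~\ref{Endprzero} and~\ref{IcontainsrI}; your inline computation $\End_A^{\pr}(A/I)\cong (r(I)+I)/I$ is a more explicit variant of the argument in Lemma~\ref{Endprzero}, and your (ii) $\Leftrightarrow$ (v) is exactly the (i) $\Leftrightarrow$ (iii) step of Lemma~\ref{IcontainsrI}.
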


Any full abelian subcategory of a distinguished abelian subcategory of a 
triangulated category is clearly again a distinguished abelian 
subcategory. In particular, if the canonical functor $\mod(A/I)\to$ 
$\modbar(A)$ is an embedding  as a distinguished abelian subcategory, 
then so is the canonical functor $\mod(A/J)\to$ $\modbar(A)$ for any 
ideal $J$ which contains $I$, because this factors through the 
embedding $\mod(A/J)\to$ $\mod(A/I)$ induced by the canonical 
surjection $A/I\to$ $A/J$.

Every ideal which squares to zero gives rise to a distinguished abelian
subcategory in the stable module category of a selfinjective algebra.

\begin{Corollary} \label{J2zeroCor}
Let $A$ be a finite-dimensional selfinjective $k$-algebra, and let $J$ be
an ideal in $A$ such that $J^2=\{0\}$. Set $I=$ $l(J)$. Then the canonical
surjection $A\to$ $A/I$ induces an embedding $\mod(A/I)\to$ $\modbar(A)$
of $\mod(A/I)$ as a distinguished abelian subcategory in $\modbar(A)$.
\end{Corollary}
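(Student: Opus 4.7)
The plan is to reduce this to Theorem \ref{AmodIembedding1} by checking one of the equivalent conditions (ii) or (v) for the ideal $I = l(J)$. The key ingredient is Nakayama's bijection for selfinjective algebras, recalled in the Notation: on a selfinjective algebra, the maps $I\mapsto r(I)$ and $J\mapsto l(J)$ are mutually inverse inclusion-reversing bijections on the lattice of ideals, so in particular $r(l(J))=J$ for any ideal $J$.

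First I would dispense with the degenerate case $J=\{0\}$, in which $I=l(J)=A$ is not a proper ideal and $A/I=0$, so the statement is vacuous. Assuming $J\ne\{0\}$, we have $1\cdot J = J \ne \{0\}$, so $1\notin l(J)$ and therefore $I=l(J)$ is a proper ideal of $A$, which is what Theorem \ref{AmodIembedding1} requires.

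Next I would compute $r(I)$: by Nakayama's bijection for the selfinjective algebra $A$, we have
$$r(I) = r(l(J)) = J.$$
The hypothesis $J^2 = \{0\}$ then gives $r(I)^2 = J^2 = \{0\}$, which is exactly condition (v) of Theorem \ref{AmodIembedding1}. (Equivalently, $J\cdot J=\{0\}$ means $J\subseteq l(J)=I$, so $r(I)=J\subseteq I$, which is condition (ii).) By Theorem \ref{AmodIembedding1}, the canonical functor $\mod(A/I)\to\modbar(A)$ is an embedding as a distinguished abelian subcategory, as claimed.

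There is essentially no obstacle: the only nontrivial input is the Nakayama bijection $r\circ l = \mathrm{id}$ on ideals of a selfinjective algebra, which is stated in the Notation section. The proof is a direct application of Theorem \ref{AmodIembedding1} once one observes that taking $I=l(J)$ is tailor-made so that its right annihilator is the nilpotent ideal $J$ itself.
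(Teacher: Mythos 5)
Your proof is correct and takes essentially the same approach as the paper: compute $r(I)=r(l(J))=J$ via the Nakayama bijection, deduce $r(I)^2=\{0\}$ from $J^2=\{0\}$, and invoke condition (v) of Theorem \ref{AmodIembedding1}. The extra care you take in noting that $I$ is proper when $J\neq\{0\}$ is a sensible check the paper leaves implicit.
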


\begin{proof}
We have $r(I)=$ $r(l(J))=J$, hence $r(I)^2=$ $\{0\}$ by the assumptions.
The result follows from the equivalence of the statements (i) and (v) in
Theorem \ref{AmodIembedding1}.
\end{proof}

If $A$ is a finite-dimensional Hopf algebra, then 
$\mod(A)$ is a monoidal abelian category, and $A$ is selfinjective, by 
a result of Larson and Sweedler \cite{LarSwe}. 
Given two $A$-modules $U$, $V$, if one of $U$, $V$ is projective, then 
so is $U\tenk V$ (see e. g. \cite[Proposition 3.1.5]{BenI}). Thus 
$\modbar(A)$ is a monoidal triangulated category. If $I$ is a proper 
Hopf ideal in $A$, then $A/I$ is a Hopf algebra, the canonical 
surjection $A\to A/I$ is a homomorphism of Hopf algebras, and hence 
induces a full embedding of monoidal categories $\mod(A/I)\to$ $\mod(A)$. 
Thus Theorem \ref{AmodIembedding1} implies immediately the following 
observation.

\begin{Corollary} \label{AmodIembeddingHopf}
Let $A$ be a finite-dimensional Hopf algebra over $k$ and let $I$ be a 
proper Hopf ideal in $A$ containing its right annihilator $r(I)$. Then 
the composition of canonical functors $\mod(A/I)\to \mod(A)\to 
\modbar(A)$ is an embedding of $\mod(A/I)$ as a monoidal distinguished 
abelian subcategory in the monoidal triangulated category $\modbar(A)$.
\end{Corollary}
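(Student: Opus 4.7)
The plan is to observe that essentially all the ingredients have already been lined up just before the corollary, so the task reduces to assembling them. First I would invoke the result of Larson--Sweedler to conclude that the finite-dimensional Hopf algebra $A$ is selfinjective, so that $\modbar(A)$ is a triangulated category and Theorem \ref{AmodIembedding1} applies. Since $I$ contains its right annihilator $r(I)$ by hypothesis, the equivalence (i)$\Leftrightarrow$(ii) in Theorem \ref{AmodIembedding1} immediately gives that the composition $\mod(A/I)\to\mod(A)\to\modbar(A)$ is an embedding of $\mod(A/I)$ as a distinguished abelian subcategory of $\modbar(A)$.

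Next I would upgrade this to a monoidal embedding. Because $I$ is a Hopf ideal, $A/I$ inherits a Hopf algebra structure and the surjection $A\to A/I$ is a morphism of Hopf algebras. The restriction functor $\mod(A/I)\to\mod(A)$ along this morphism is therefore a full embedding of monoidal categories with respect to the tensor product $\tenk$ (the $A$-action on $U\tenk V$ for $A/I$-modules $U$, $V$ is obtained via the comultiplication of $A$, which factors through that of $A/I$). On the other side, for any $A$-modules $U$, $V$ the module $U\tenk V$ is projective whenever one of $U$, $V$ is projective, as cited from \cite[Proposition 3.1.5]{BenI}, so the canonical functor $\mod(A)\to\modbar(A)$ is likewise monoidal, and hence so is the composition.

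Putting these two observations together, the image of $\mod(A/I)$ in $\modbar(A)$ is simultaneously a monoidal subcategory (by the previous paragraph) and a distinguished abelian subcategory (by the first paragraph), which is exactly what it means to be a monoidal distinguished abelian subcategory. Since neither step requires any genuinely new work beyond combining Theorem \ref{AmodIembedding1} with the standard facts about tensor products of Hopf modules, there is no real obstacle; if there is a subtlety to flag, it is merely to confirm that the monoidal structure on $\modbar(A)$ used here is the one induced by $\tenk$ with projectives serving as a tensor ideal, so that the quotient functor $\mod(A)\to\modbar(A)$ is strictly monoidal and the induced embedding of $\mod(A/I)$ is a monoidal subcategory in the sense of the paper.
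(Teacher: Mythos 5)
Your proposal is correct and follows essentially the same route as the paper: invoke Larson--Sweedler for selfinjectivity, apply Theorem \ref{AmodIembedding1} via the hypothesis $r(I)\subseteq I$, and use that $A\to A/I$ is a Hopf algebra homomorphism together with the fact that tensoring with a projective module yields a projective module to get the monoidal structure. The paper packs these observations into the paragraph preceding the corollary rather than a formal proof environment, but the content is the same.
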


Some of the implications in Theorem \ref{AmodIembedding1} hold in 
slightly greater generality. 

\begin{Lemma} \label{Endprzero}
Let $A$ be a finite-dimensional $k$-algebra, and let $J$ 
be a proper left ideal in $A$. We have $\End^\pr_A(A/J)=\{0\}$
if and only if $r(J)\subseteq$ $J$.
\end{Lemma}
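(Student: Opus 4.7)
The plan is to identify both sides of the claimed equivalence explicitly inside $A$, modulo $J$, and then compare them.

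First I would describe $\End_A(A/J)$ concretely. Any $A$-module homomorphism $\phi : A/J \to A/J$ is determined by $\phi(1+J)=a+J$, where $a\in A$ must satisfy $Ja\subseteq J$ (so that the formula $\phi(x+J)=xa+J$ is well defined), and $a$ is determined only modulo $J$. Thus
\[
\End_A(A/J) \;\cong\; \{a\in A \,:\, Ja\subseteq J\}/J
\]
via the map $a\mapsto (x+J\mapsto xa+J)$.

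Next I would compute $\End^\pr_A(A/J)$ inside this description. Since $A/J$ is finitely generated, any homomorphism that factors through a projective $A$-module factors through a finitely generated free module $A^n$, by splitting the projective off a free module and composing. So an element of $\End^\pr_A(A/J)$ can be written as $\delta\circ\gamma$ with $\gamma:A/J\to A^n$ and $\delta:A^n\to A/J$. A homomorphism $\gamma:A/J\to A^n$ is determined by $\gamma(1+J)=(a_1,\dots,a_n)$ with $Ja_i=0$, i.e.\ $a_i\in r(J)$; and $\delta$ is determined by $\delta(e_i)=b_i+J$ for $b_i\in A$. Hence $\delta\circ\gamma$ sends $1+J$ to $c+J$ where $c=\sum_i a_ib_i$. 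Since $r(J)$ is a right ideal of $A$, we have $r(J)\cdot A=r(J)$, so $c$ ranges over exactly $r(J)$ as the $a_i,b_i$ vary. Conversely, any $c\in r(J)$ produces a map that factors through $A$, namely $x+J \mapsto xc$ followed by the projection $A\to A/J$ (the first map is well defined because $Jc=0$). Therefore
\[
\End^\pr_A(A/J) \;=\; (r(J)+J)/J
\]
inside $\End_A(A/J)=\{a:Ja\subseteq J\}/J$ (noting $r(J)\subseteq\{a:Ja\subseteq J\}$ automatically).

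From this equality the lemma is immediate: $\End^\pr_A(A/J)=\{0\}$ if and only if $r(J)+J=J$, which happens if and only if $r(J)\subseteq J$.

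The only step requiring mild care is the factorisation argument, namely that a map factoring through a projective can be arranged to factor through a finitely generated free module and that tracking the images yields coefficients in $r(J)$; everything else is formal bookkeeping.
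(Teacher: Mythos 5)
Your proof is correct and follows essentially the same approach as the paper: both parametrise the endomorphisms of $A/J$ that factor through a projective by elements of $r(J)$ (you by tracking the image of $1+J$ through a free module $A^n$, the paper by reducing to factoring through the surjection $\pi : A\to A/J$ and observing $\beta\circ\pi$ is right multiplication by some $y\in r(J)$), and then observe that such an endomorphism vanishes precisely when the corresponding element lies in $J$. Your version records the slightly sharper identification $\End^\pr_A(A/J)\cong (r(J)+J)/J$ inside $\End_A(A/J)\cong\{a : Ja\subseteq J\}/J$, but the substance is the same.
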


\begin{proof}
Note first that if $\beta : A/J\to A$ is an $A$-homomorphism, then
$\beta\circ\pi$ is an $A$-endomorphism of $A$ with kernel containing
$J$, hence induced by right multiplication with an element $y\in$ $r(J)$.
Conversely, right multiplication with an element $y\in$ $r(J)$ factors
through $\pi$. Let $\alpha : A/J\to$ $A/J$ be an endomorphism
of $A/J$ as a left $A$-module such that $\alpha$ factors through
a projective $A$-module. Then $\alpha$ factors through the canonical
surjection $\pi : A\to$ $A/J$; that is, there is an $A$-homomorphism
$\beta : A/J\to$ $A$ such that $\alpha=$ $\pi\circ\beta$. By the above,
the endomorphism  $\beta\circ\pi$ of $A$ is induced by right 
multiplication with an element $y\in$ $r(J)$. Since $\pi$ is surjective,
we have $\alpha=0$ if and only if $\alpha\circ\pi=$ 
$\pi\circ\beta\circ\pi=0$, or equivalently, if and only if 
$\Im(\beta\circ\pi)\subseteq$ $\ker(\pi)=J$.  Since the image of 
$\beta\circ\pi$ is $Ay$, it follows that $\alpha=0$ if and only if 
$y\in$ $J$. The result follows.
\end{proof}

\begin{Lemma} \label{Homprzero}
Let $A$ be a finite-dimensional $k$-algebra and let $I$ be a proper
ideal in $A$. Suppose that $r(I)\subseteq$ $I$.
Then for any two $A/I$-modules $U$, $V$ we have 
$\Hom_A^\pr(U,V) = \{0\}\ .$
\end{Lemma}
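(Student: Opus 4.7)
The plan is to reduce this statement to the endomorphism case already handled by Lemma \ref{Endprzero} and then upgrade to arbitrary modules by the same trick used in Lemma \ref{Homprzero1}(i), taking $Y=A/I$. Since $r(I)\subseteq I$, Lemma \ref{Endprzero} gives $\End^\pr_A(A/I)=\{0\}$, which is the only input about $I$ that I will use; the rest is a purely formal lifting argument.

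Concretely, I would fix $A/I$-modules $U$, $V$ and pick surjective $A/I$-linear (hence $A$-linear) maps $\alpha:(A/I)^{(m)}\to U$ and $\beta:(A/I)^{(n)}\to V$ from free $A/I$-modules, allowing the index sets $m$, $n$ to be arbitrary in case $U$ or $V$ is not finitely generated. Given $\psi:U\to V$ factoring as $\psi=\delta\circ\gamma$ through a projective $A$-module $P$, I use projectivity of $P$ to lift $\delta$ through the surjection $\beta$, obtaining $\epsilon:P\to (A/I)^{(n)}$ with $\beta\circ\epsilon=\delta$. Then the composite $\epsilon\circ\gamma\circ\alpha:(A/I)^{(m)}\to(A/I)^{(n)}$ factors through the projective $A$-module $P$.

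The crux is then to deduce that this composite is zero. Writing it as a matrix of $A$-endomorphisms of $A/I$, each entry arises by precomposing with a summand inclusion $A/I\hookrightarrow(A/I)^{(m)}$ and postcomposing with a summand projection $(A/I)^{(n)}\to A/I$; since each such entry still factors through $P$, it lies in $\End^\pr_A(A/I)=\{0\}$ and is therefore zero. Hence $\epsilon\circ\gamma\circ\alpha=0$, so $\psi\circ\alpha=\delta\circ\gamma\circ\alpha=\beta\circ\epsilon\circ\gamma\circ\alpha=0$, and surjectivity of $\alpha$ forces $\psi=0$.

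There is no real obstacle here beyond bookkeeping; the main subtlety, if any, is making sure the matrix decomposition argument still works when $m$ or $n$ is infinite, but this requires nothing more than the universal properties of direct sums. In fact, the entire argument is essentially a verbatim generalisation of the proof of Lemma \ref{Homprzero1}(i) applied to $Y=A/I$, with the observation that quotients of free $A/I$-modules are arbitrary $A/I$-modules.
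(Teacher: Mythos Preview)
Your proof is correct and follows essentially the same route as the paper: invoke Lemma \ref{Endprzero} to obtain $\End_A^\pr(A/I)=\{0\}$, then apply the lifting argument of Lemma \ref{Homprzero1}(i) with $Y=A/I$. The only difference is that the paper simply cites Lemma \ref{Homprzero1} (via the identification $Y\ten_{A/I}U\cong U$), whereas you unpack that argument inline and extend it to possibly infinite index sets---which, incidentally, is needed since the statement does not assume $U$, $V$ finitely generated while Lemma \ref{Homprzero1} as written does.
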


\begin{proof}
Set $Y=$ $A/I$, regarded as an $A$-$A/I$-bimodule. 
Then $\End_A^\pr(Y)=\{0\}$ by Lemma \ref{Endprzero}, and we 
have $\End_A(Y)\cong$ $(A/I)^\op$, hence $D=\End_A(Y)^\op\cong$  $A/I$. 
Using this isomorphism, if $U$ is an $A/I$-module, then $Y\ten_{A/I} U=$ 
$A/I\ten_{A/I} U\cong$ $U$, regarded as an $A$-module via the canonical 
surjection $A\to A/I$. The result follows from Lemma \ref{Homprzero1}.
\end{proof}

\begin{Lemma} \label{IcontainsrI}
Let $A$ be a finite-dimensional selfinjective $k$-algebra, and let $I$ 
be an ideal in $A$. The following are equivalent.

\begin{enumerate}
\item[{\rm (i)}]
We have $r(I) \subseteq I$.
\item[{\rm (ii)}]
We have $l(I) \subseteq I$.
\item[{\rm (iii)}]
We have $r(I)^2=0$. 
\item[{\rm (iv)}]
We have $l(I)^2=0$.
\end{enumerate}
\end{Lemma}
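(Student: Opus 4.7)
The plan is to deduce all four equivalences from the Nakayama bijections recalled in the Notation: for $A$ selfinjective, $l$ and $r$ are mutually inverse, inclusion-reversing bijections on the set of two-sided ideals, so in particular $l(r(I))=I=r(l(I))$ for every ideal $I$.

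For the equivalence (i)\,$\Leftrightarrow$\,(ii), I would simply apply one of these bijections. If $r(I)\subseteq I$, then applying $l$ and reversing inclusions gives $l(I)\subseteq l(r(I))=I$, which is (ii). Conversely, if $l(I)\subseteq I$, applying $r$ yields $r(I)\subseteq r(l(I))=I$. So (i) and (ii) are formally interchangeable under $l\leftrightarrow r$.

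For (i)\,$\Leftrightarrow$\,(iii), I would use that $I\cdot r(I)=0$ holds by the very definition of $r(I)$. If $r(I)\subseteq I$, then $r(I)\cdot r(I)\subseteq I\cdot r(I)=0$, proving (iii). Conversely, if $r(I)^2=0$, then $r(I)\subseteq l(r(I))=I$, which is (i). The equivalence (ii)\,$\Leftrightarrow$\,(iv) is obtained in exactly the same way, swapping the roles of $l$ and $r$: from $l(I)\cdot I=0$ one gets $l(I)\subseteq I\Rightarrow l(I)^2=0$, and $l(I)^2=0\Rightarrow l(I)\subseteq r(l(I))=I$.

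I do not expect any real obstacle: every step is a one-line application of either the defining annihilator relation or the Nakayama bijection $l(r(I))=I=r(l(I))$. The only point worth a line of commentary is that $r(I)$ and $l(I)$ are genuinely two-sided ideals (since $I$ is), so that the Nakayama correspondence applies to them and the squares $r(I)^2$, $l(I)^2$ in (iii), (iv) make sense as ideals.
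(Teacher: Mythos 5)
Your proof is correct and follows essentially the same route as the paper: both use the inclusion-reversing Nakayama bijections $l$, $r$ with $l(r(I))=I=r(l(I))$ together with the defining relations $I\cdot r(I)=0$ and $l(I)\cdot I=0$. The only difference is cosmetic organization of the implications.
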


\begin{proof}
If $r(I)\subseteq$ $I$, then taking left annihilators yields
$I=$ $l(r(I))\supseteq l(I)$, so (i) implies (ii). A similar argument
shows that (ii) implies (i). Since $I\cdot r(I)=0$, it follows that
if $r(I)\subseteq$ $I$, then $r(I)^2=0$. Thus (i) implies (iii). 
A similar argument shows that (ii) implies (iv). If $r(I)^2=0$, then
$r(I)\subseteq$ $l(r(I))=I$, so (iii) implies (i), and a similar
argument shows that (iv) implies (ii). 
\end{proof}

\begin{proof}[{Proof of Theorem \ref{AmodIembedding1}}]
We are going to prove Theorem \ref{AmodIembedding1} as a special case 
of Theorem \ref{full-embeddings}. Set $Y=$ $A/I$, regarded as an 
$A$-$A/I$-bimodule. 
We have $\End_A(Y)=$ $\End_{A/I}(Y)\cong$ $(A/I)^\op$. Clearly $Y$
is projective as a right $A/I$-module. Given an $A/I$-module $V$,
the adjunction unit $V\to$ $\Hom_A(A/I, A/I\ten_{A/I} V)$ is
trivially an isomorphism. Thus the $A$-$A/I$-bimodule satisfies the
conditions (2) and (3) in Theorem \ref{full-embeddings}. Therefore
the composition of functors $\mod(A/I)\to$ $\mod(A)\to$ $\modbar(A)$
is an embedding of $\mod(A/I)$ as a distinguished abelian subcategory
if and only if (1) holds, that is, if and only if $\End_A^\pr(A/I)=$
$\{0\}$. This proves the equivalence of (i) and (iii).  
It follows from Lemma \ref{Endprzero} that the statements (ii) and 
(iii) are equivalent. The implication (iii)  $\Rightarrow$ (iv) follows 
from Lemma \ref{Homprzero}. The implication (iv) $\Rightarrow$ (i) 
follows from Lemma \ref{detectLemma}. The equivalence of (ii) and (v) 
holds by Lemma \ref{IcontainsrI}. 
\end{proof}

\begin{Remark} \label{leftrightRemark}
Lemma \ref{IcontainsrI} implies that working with left or right modules 
yields equivalent statements. To illustrate this point, by Theorem
\ref{AmodIembedding1}, we have $r(I)\subseteq$ $I$ if and only if we
have a full embedding $\mod(A/I)\to$ $\modbar(A)$. There is an
obvious right module analogue which states that $l(I)\subseteq$ $I$ if 
and only if we have a full embedding $\mod((A/I)^\op)\to$ 
$\modbar(A^\op)$. Thus Lemma \ref{IcontainsrI} implies that we have a 
full embedding $\mod(A/I)\to$ $\modbar(A)$ if and only if we have a 
full embedding $\mod((A/I)^\op)\to$ $\modbar(A^\op)$. In other words, 
the full distinguished abelian subcategories in $\modbar(A)$ and
$\modbar(A^\op)$ constructed in Theorem \ref{AmodIembedding1} and its 
right module analogue correspond bijectively to each other.
\end{Remark}

\begin{Theorem} \label{AmodIembeddings}
Let $A$ be a finite-dimensional selfinjective $k$-algebra such that all 
simple $A$-modules are nonprojective. Let $\CD$ be a distinguished 
abelian subcategory of $\modbar(A)$. Suppose that $\CD$ contains all 
simple $A$-modules.  
The simple $A$-modules are exactly all simple objects in $\CD$. 
In particular, we have $\ell(\CD)=\ell(A)$.
\end{Theorem}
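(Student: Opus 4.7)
The plan is to prove two complementary statements: (I) every simple $A$-module $S\in \CD$ is simple as an object of $\CD$, and (II) every simple object of $\CD$ is isomorphic in $\CD$ to a simple $A$-module. Together these identify the simple objects of $\CD$ with the simple $A$-modules and give $\ell(\CD)=\ell(A)$.

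For (I), fix a simple $A$-module $S\in \CD$, which is nonzero in $\modbar(A)$ by the nonprojectivity hypothesis. Consider a short exact sequence $0\to X\xrightarrow{f} S\xrightarrow{g} Y\to 0$ in $\CD$, with associated exact triangle $X\to S\to Y\to \Sigma X$ in $\modbar(A)$. I would split into cases according to the vanishing of $f$ and $g$ in $\modbar(A)$. If $f=0$ in $\modbar(A)$, then $f$ is a zero monomorphism in the abelian category $\CD$, which forces $X=0$. If $g=0$ in $\modbar(A)$, then $g$ is a zero epimorphism in $\CD$, which forces $Y=0$. The hard case is when both $f$ and $g$ are nonzero in $\modbar(A)$; here one appeals to the technical results of Section~\ref{exactSection} on the interplay between short exact sequences of $\mod(A)$ and of $\CD$ to realise the sequence in $\CD$ by a short exact sequence $0\to X'\to S\to Y'\to 0$ in $\mod(A)$ (with $X'\cong X$ and $Y'\cong Y$ in $\modbar(A)$, and with $S$ itself as middle term since $S$ has no projective summand). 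Then $X'$ is a submodule of the simple $A$-module $S$, so $X'=0$ or $X'=S$; the first gives $X=0$ and the second makes $f$ an isomorphism, forcing $Y=0$.

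For (II), let $T$ be a simple object of $\CD$, hence nonzero in $\modbar(A)$ and so nonprojective. Krull--Schmidt in $\mod(A)$ lets me replace $T$ by its unique nonprojective indecomposable summand, so I may assume $T$ is indecomposable nonprojective. Pick a simple $A$-quotient $\pi\colon T\twoheadrightarrow S$, for example a simple summand of $T/\rad(T)$; by hypothesis $S\in\CD$. I claim $\pi\neq 0$ in $\modbar(A)$: if it factored through a projective, then after replacing that projective by the projective cover $P_S$ of $S$ it would factor as $T\xrightarrow{\alpha} P_S\xrightarrow{\beta} S$, and surjectivity of $\pi=\beta\alpha$ together with the locality of $P_S$ (whose unique maximal submodule is $\rad(P_S)=\ker(\beta)$) would force $\alpha$ to be surjective, hence split, so that $T$ has $P_S$ as a direct summand --- contradicting that $T$ is indecomposable nonprojective. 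Thus $\pi$ represents a nonzero morphism in $\Hombar_A(T,S)$; by (I), $S$ is simple in $\CD$, so Schur's lemma applied to the two simple objects $T$, $S$ of the abelian category $\CD$ makes $\pi$ an isomorphism, giving $T\cong S$ in $\CD$.

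The main obstacle is the third case of (I): descending from the short exact sequence in $\CD$ to an honest short exact sequence of $A$-modules with the simple module $S$ itself as middle term, so that the elementary submodule structure of $S$ in $\mod(A)$ becomes available. This is precisely what the interplay results of Section~\ref{exactSection} are designed to produce; with that in hand, the remaining arguments are standard Krull--Schmidt, projective-cover and Schur-lemma manipulations.
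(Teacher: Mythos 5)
Your part (II) is correct and in fact fills a small point that the paper passes over quickly (namely that a surjection from an indecomposable nonprojective $U$ onto a simple module cannot factor through a projective, via the projective-cover argument you give). The genuine problem is in part (I), in the third case (both $f$, $g$ nonzero in $\modbar(A)$). You assert that the technical results of Section~\ref{exactSection} let you realise the sequence $0\to X\xrightarrow{f} S\xrightarrow{g} Y\to 0$ in $\CD$ by an honest short exact sequence $0\to X'\to S\to Y'\to 0$ of $A$-modules, ``with $S$ itself as middle term since $S$ has no projective summand.'' That is not what Proposition~\ref{exactAandD1} gives. Part (i) of that proposition produces a short exact sequence of $A$-modules of the form $0\to X\to S\oplus Q\to Y\to 0$: the projective correction $Q$ is added to the \emph{middle} term, and the fact that $S$ itself is projective-free says nothing about $Q$. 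Part (ii), which does allow you to conclude $Q=\{0\}$, requires one of the \emph{outer} terms $X$ or $Z$ to be simple --- precisely the thing you are trying to prove. So the step that turns $X$ into a genuine $A$-submodule of $S$ is unjustified, and with a possibly nonzero $Q$ in the middle the ``elementary submodule structure of $S$'' is not directly available. (One can still push an argument through --- for instance, the composite $a_1=p_S\circ a : X\to S$ must be surjective once $f\neq 0$, and one can then analyse $\ker(a_1)\hookrightarrow Q$ --- but this is a different and longer argument than the one you sketch, and it requires care because $\ker(a_1)$ being a submodule of the injective $Q$ does not by itself make it a direct summand of $X$.)

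The paper avoids this difficulty entirely by arguing directly in the abelian category $\CD$: for an indecomposable nonprojective $U\in\CD$ and a non-isomorphism $\psi: U\to S$ with $\underline\psi\neq 0$, the kernel of the (necessarily surjective) $\psi$ contains a simple submodule $T$, and $T$ lies in $\CD$ by hypothesis. The inclusion $T\hookrightarrow U$ is nonzero in $\modbar(A)$ (an injective map from a simple into a projective-free module cannot factor through a projective), while its composite with $\underline\psi$ is zero, so $\underline\psi$ is not a monomorphism in $\CD$. This immediately gives that every monomorphism $U\to S$ in $\CD$ is an isomorphism, hence $S$ is simple in $\CD$, with no need to descend the short exact sequence to $\mod(A)$ at all. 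I would recommend replacing your case~3 argument with this one, or else supplying the missing analysis of the projective correction $Q$.
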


\begin{proof}
Let $U$ be an indecomposable nonprojective $A$-module belonging to
$\CD$, and let $S$ be a simple $A$-module. Let $\psi : U\to$ $S$ be an 
$A$-homomorphism, and denote by $\underline\psi$ the image of $\psi$ 
in $\Hombar_A(U,S)$. Note that $U$, $S$ are both nonzero objects 
$\CD$, by the assumptions.

We are going to show first that if $\psi$ is not an isomorphism in
$\mod(A)$, then $\underline\psi$ is not a monomorphism in $\CD$. If 
$\underline\psi$ is zero, there is nothing to show. If 
$\underline\psi$ is nonzero, then $\psi$ is surjective because $S$ is 
a simple $A$-module. Since $\psi$ is not an isomorphism we have 
 $\ker(\psi)$ is nonzero. Let $T$ be a simple $A$-submodule
of $\ker(\psi)$. The inclusion $T\to$ $U$ is an injective
$A$-homomorphism, hence its image in $\modbar(A)$ is a nonzero 
morphism in $\modbar(A)$. By construction, the composition
$T\to$ $U\to$ $S$ is zero in $\modbar(A)$. 
Since also $T$ belongs to $\CD$, it follows that $\underline\psi$
is not a monomorphism in $\CD$. 

This argument shows that $S$ is a simple object of $\CD$.
Indeed, if not, there would have to be a monomorphism $U\to$ $S$
in $\CD$ which is not an isomorphism. But by the first paragraph, 
any such monomorphism is inducd by an isomorphism in $\mod(A)$,
so is an isomorphism in $\CD$ as well.
This argument also shows that $\CD$ contains no other simple
objects. Indeed, let $U$ be an indecomposable nonprojective 
$A$-module which is a simple object in $\CD$.  Consider a surjective
$A$-homomorphism $\psi : U\to$ $S$ onto some simple $A$-module $S$.
Then $S$ belongs to $\CD$, and the image $\underline\psi$ is a
monomorphism in $\CD$ because $U$ is simple in $\CD$. But then 
$\psi$ is an isomorphism by the first argument.
Thus the simple $A$-modules are exactly the simple
objects in $\CD$, whence the result.
\end{proof}

\begin{Corollary} \label{AmodIembeddings-Cor1}
Let $A$ be a finite-dimensional selfinjective $k$-algebra such that all 
simple $A$-modules are nonprojective. Let $I$ be an ideal such that 
$r(I)\subseteq$ $I\subseteq$ $J(A)$. Let $\CD$ be a distinguished 
abelian subcategory of $\modbar(A)$ containing $\mod(A/I)$. Then the 
simple $A$-modules are exactly the simple objects in $\CD$.
\end{Corollary}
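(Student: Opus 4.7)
The plan is to deduce this corollary directly from Theorem \ref{AmodIembeddings}. For that theorem to apply, I need to verify its single hypothesis: that the distinguished abelian subcategory $\CD$ contains every simple $A$-module. Once that is in place, Theorem \ref{AmodIembeddings} gives the conclusion verbatim.

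The key observation is the role of the hypothesis $I\subseteq J(A)$. Since $J(A)$ annihilates every simple $A$-module, and $I\subseteq J(A)$, every simple $A$-module $S$ satisfies $IS=\{0\}$. Hence each simple $A$-module is in a canonical way an $A/I$-module, so belongs to $\mod(A/I)$. The hypothesis $r(I)\subseteq I$ is what makes the inclusion of $\mod(A/I)$ into $\modbar(A)$ meaningful as a distinguished abelian subcategory, via Theorem \ref{AmodIembedding1}; combined with the assumption that $\CD$ contains $\mod(A/I)$, this shows that every simple $A$-module belongs to $\CD$.

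With that verified, Theorem \ref{AmodIembeddings} immediately yields that the simple $A$-modules are exactly the simple objects of $\CD$. There is no substantive obstacle: the corollary is a direct specialisation of the theorem, using only the trivial fact that $I\subseteq J(A)$ forces every simple $A$-module to lie in $\mod(A/I)$.
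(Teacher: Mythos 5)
Your proof is correct and follows the same route as the paper: use $I\subseteq J(A)$ to see that every simple $A$-module is an $A/I$-module and hence lies in $\CD$, then invoke Theorem \ref{AmodIembeddings}. This matches the paper's argument essentially verbatim.
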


\begin{proof}
The hypothesis $r(I)\subseteq$ $I$ implies, by Theorem 
\ref{AmodIembedding1}, that $\mod(A/I)$ is a distinguished abelian
subcategory of $\modbar(A)$. The hypothesis $I\subseteq$ $J(A)$
implies that $\mod(A/I)$ contains all simple $A$-modules. The result
follows from Theorem \ref{AmodIembeddings}.
\end{proof}

Removing the reference to simple $A$-modules yields the following
statement.

\begin{Corollary} \label{AmodIembeddings-Cor3}
Let $A$ be a finite-dimensional selfinjective $k$-algebra such that all 
simple $A$-modules are nonprojective. Then $\modbar(A)$ has  a
semisimple distinguished abelian subcategory $\CD$ such that $\ell(\CD)$
is finite and such that for any distinguished abelian subcategory $\CD'$ 
of $\modbar(A)$ containing $\CD$ we have $\ell(\CD')=$ $\ell(\CD)$.
\end{Corollary}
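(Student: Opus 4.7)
The plan is to take $\CD$ to be the essential image of $\mod(A/J(A))$ in $\modbar(A)$ under the functor induced by the canonical surjection $A\to A/J(A)$, and to show that this $\CD$ satisfies the required properties.

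First I would verify that $\CD$ is a distinguished abelian subcategory of $\modbar(A)$, by applying Theorem \ref{AmodIembedding1} with $I=J(A)$; this reduces to checking that $r(J(A))\subseteq J(A)$. The equality $r(J(A))=\soc({}_AA)$ holds in any finite-dimensional algebra, since an element $a$ satisfies $J(A)\cdot a=0$ if and only if $Aa$ is an $A/J(A)$-module, if and only if $Aa$ is semisimple. The hypothesis that no simple $A$-module is projective means that $A$ has no simple direct summand as a left module; thus in a decomposition $A=\bigoplus_i P_i$ into indecomposable projective summands, each $P_i$ is nonsimple. Since $P_i$ is local with unique maximal submodule $J(A)P_i$, the socle $\soc(P_i)$ is a proper submodule of $P_i$ and hence contained in $J(A)P_i\subseteq J(A)$. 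Summing yields $\soc({}_AA)\subseteq J(A)$, and therefore $r(J(A))\subseteq J(A)$.

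Next, $\CD$ is semisimple with $\ell(\CD)=\ell(A)$ finite, since $A/J(A)$ is a semisimple algebra whose simple modules coincide with the simple $A$-modules. Finally, since $\CD$ contains all simple $A$-modules, any distinguished abelian subcategory $\CD'$ of $\modbar(A)$ containing $\CD$ also contains all simple $A$-modules, so Theorem \ref{AmodIembeddings} applies to $\CD'$ and identifies its simple objects with the simple $A$-modules; in particular $\ell(\CD')=\ell(A)=\ell(\CD)$.

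There is no serious obstacle: the argument chains the preceding results with the elementary observation that the absence of simple projective summands forces $\soc({}_AA)\subseteq J(A)$. The only point to watch is that while Theorem \ref{AmodIembeddings} and Corollary \ref{AmodIembeddings-Cor1} deliver the equality $\ell(\CD')=\ell(\CD)$, they do not say that $\CD$ itself is maximal among distinguished abelian subcategories; the statement of the corollary only asserts maximality of the invariant $\ell$, which is exactly what this construction provides.
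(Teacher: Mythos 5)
Your proof is correct and takes essentially the same approach as the paper: the paper's proof simply takes $\CD$ to be the full subcategory of semisimple $A$-modules (i.e. the image of $\mod(A/J(A))$) and cites Theorem \ref{AmodIembeddings}, leaving implicit the verification that $r(J(A))=\soc(A)\subseteq J(A)$ when no simple module is projective, a point the paper makes separately in Example \ref{AJA-example} and which you have spelled out in detail.
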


\begin{proof}
Let $\CD$ be the full subcategory of $\modbar(A)$ consisting of all
semisimple modules in $\mod(A)$. The result follows from Theorem 
\ref{AmodIembeddings}.
\end{proof}

The distinguished abelian subcategories of $\modbar(A)$ of the form 
$\mod(A/I)$ in Theorem \ref{AmodIembedding1} have the property that
the simple objects in $\mod(A/I)$ remain simple in $\mod(A)$. 
The next result explores the question under what circumstances a
distinguished abelian subcategory of $\modbar(A)$ whose simple
objects correspond to simple $A$-modules is of the form $\mod(A/I)$
for some ideal $I$ in $A$.

\begin{Theorem} \label{modAmodDsimple}
Let $A$ be a finite-dimensional selfinjective $k$-algebra, and let
$D$ be a finite-dimensional $k$-algebra.
 Let $Y$ be a finitely
generated  $A$-$D$-bimodule such that $Y$ is projective as a 
right $D$-module and such that $Y$ has no nonzero projective
direct summand as a left $A$-module. 
Suppose that for any simple $D$-module $T$ the
$A$-module $Y\tenD T$ is simple and that the functor $Y\tenD -$ induces
a full embedding of $\mod(D)$ as a distinguished abelian subcategory
of $\modbar(A)$. Let $I$ be the annihilator in $A$ of $Y$ as a left
$A$-module. Then $r(I)\subseteq$ $I$, and $Y\tenD -$ induces
an equivalence $\mod(D)\cong \mod(A/I)$. 
\end{Theorem}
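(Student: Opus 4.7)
The plan is to prove that $\bar F := Y\tenD -: \mod(D) \to \mod(A/I)$ is an equivalence of abelian categories; then the assertion $r(I)\subseteq I$ will follow from Theorem~\ref{AmodIembedding1}, because the canonical functor $\mod(A/I) \to \modbar(A)$ becomes identified, via the equivalence $\bar F$, with the given distinguished abelian embedding $Y\tenD -$, hence is itself an embedding of $\mod(A/I)$ as a distinguished abelian subcategory of $\modbar(A)$. By Theorem~\ref{full-embeddings}, the hypotheses already supply that $D \cong \End_A(Y)^\op$, that $\bar F$ is exact and fully faithful, and that $Y$ is projective as a right $D$-module. Since $I=\ann_A(Y)$ annihilates $Y$, the functor $Y\tenD -$ genuinely factors through $\mod(A/I)$.

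The crucial technical step is to identify the simple objects of $\mod(A/I)$. Write $S_i := Y\tenD T_i$ as $T_i$ runs through the simple $D$-modules: each $S_i$ is simple over $A$ by hypothesis and annihilated by $I$, hence is a simple $A/I$-module, and full faithfulness of $\bar F$ ensures the $S_i$ are pairwise non-isomorphic. I would show that every simple $A/I$-module is one of the $S_i$ by a two-sided squeeze. On one hand, $Y$ is a faithful $A/I$-module by definition of $I$, so $A/I$ embeds as a left $A/I$-submodule of $\End_k(Y) \cong Y^{\dim_k Y}$, forcing every simple $A/I$-module to be a composition factor of $Y$. On the other hand, the $J(D)$-adic filtration $Y \supseteq YJ(D) \supseteq YJ(D)^2 \supseteq \cdots$ terminates by nilpotence of $J(D)$, and each subquotient $YJ(D)^k / YJ(D)^{k+1}$ is a quotient of $Y\tenD (J(D)^k/J(D)^{k+1})$. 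Since $J(D)^k/J(D)^{k+1}$ is a semisimple left $D$-module (being killed by $J(D)$), it is a direct sum of copies of the $T_i$, so the subquotient is a direct sum of copies of the $S_i$. Thus every composition factor of $Y$ is some $S_i$, and the simples of $\mod(A/I)$ are exactly $\{S_i\}$.

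Once the simples are under control, $Y$ is a generator for $\mod(A/I)$: for each primitive idempotent $e_i\in D$ with $De_i$ a projective cover of $T_i$, the direct summand $Ye_i = Y\tenD De_i$ of $Y$ surjects onto $S_i$, so every simple $A/I$-module is a quotient of $Y$. Essential surjectivity of $\bar F$ then follows by a standard presentation argument: any $U\in\mod(A/I)$ admits a finite presentation $Y^m \to Y^n \to U \to 0$ by the generator property together with noetherianity of $A/I$; by fullness of $\bar F$ the left map equals $\bar F(\phi)$ for some $\phi: D^m \to D^n$, and right exactness of $\bar F$ gives $U \cong \bar F(\coker\phi)$. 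Combined with full faithfulness, $\bar F$ is an equivalence, and the theorem follows.

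The main obstacle I expect is precisely the identification of $\Irr(A/I)$ with $\{S_i\}$: the hypothesis supplies only the inclusion $\{S_i\}\subseteq \Irr(A/I)$, while the reverse inclusion requires combining the $J(D)$-adic filtration argument with the faithfulness of $Y$ over $A/I$. Once the simples of $\mod(A/I)$ are pinned down, the generator property of $Y$ and the essential surjectivity of $\bar F$ are comparatively routine Morita-style manipulations.
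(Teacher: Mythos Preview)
Your proof has a genuine gap at the outset. You write that ``by Theorem~\ref{full-embeddings}, the hypotheses already supply that $D \cong \End_A(Y)^\op$ [and] that $\bar F$ is \ldots\ fully faithful.'' But Theorem~\ref{full-embeddings} is a biconditional whose left-hand side is the \emph{conjunction} of two statements: that $Y\tenD-$ is a full exact embedding into $\mod(A)$, \emph{and} that it induces a distinguished abelian embedding into $\modbar(A)$. The hypotheses of Theorem~\ref{modAmodDsimple} give you only the second conjunct. From a full embedding $\mod(D)\to\modbar(A)$ you obtain $V\cong\Hombar_A(Y,Y\tenD V)$ (this is Lemma~\ref{embedLemma2}), but \emph{not} $V\cong\Hom_A(Y,Y\tenD V)$: there could a priori be $A$-homomorphisms $Y\tenD U\to Y\tenD V$ not in the image of $\Hom_D(U,V)$ which happen to factor through projectives and hence vanish in $\modbar(A)$. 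In particular you do not yet know $\End_A^\pr(Y)=\{0\}$ or that the canonical map $D\to\End_A(Y)^\op$ is surjective.

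This matters for your presentation argument: given $Y^m\to Y^n\to U\to 0$ in $\mod(A/I)$, fullness only at the stable level lets you lift the first map to some $\phi\colon D^m\to D^n$ \emph{up to a map factoring through a projective}, so $\bar F(\phi)$ may differ from the original map by such a term, and $\coker\bar F(\phi)$ need not be $U$. The paper closes exactly this gap: it proves by induction on $\dim_k(V)$ that $\Hom_A^\pr(Y,Y\tenD V)=\{0\}$ (the base case uses that $Y\tenD T$ is a simple \emph{nonprojective} quotient of $Y$, forcing any map $Y\to P\to Y\tenD T$ through a projective $P$ either to miss the top of $P$ or to split off a projective summand of $Y$); taking $V=D$ gives $\End_A^\pr(Y)=\{0\}$ and hence condition (3) of Theorem~\ref{full-embeddings}, so $Y\tenD-$ really is full into $\mod(A)$. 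Only then does the paper invoke Lemma~\ref{modulefullembed} to get the equivalence with $\mod(A/I)$. Your identification of the simples of $A/I$ via the $J(D)$-adic filtration and the subsequent generator/presentation argument are a perfectly good substitute for Lemma~\ref{modulefullembed} \emph{once} full faithfulness at the level of $\mod(A)$ is in hand, but that is the step you have skipped, and it is where the real work lies.
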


For the proof, we will need the following elementary observation,
which  is a sufficient criterion for an epimorphism
in the category of $k$-algebras to be an isomorphism. 

\begin{Lemma} \label{full-embed-Lemma2}
Let $D$ be a finite-dimensional $k$-algebra and $A$ a unital subalgebra
of $D$. Suppose that the restriction functor $\Res^D_A : \mod(D)\to$ 
$\mod(A)$ is a full embedding which sends every simple $D$-module 
to a simple $A$-module. Then $A=D$.
\end{Lemma}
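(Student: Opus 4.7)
The plan is to argue by contradiction. Suppose $A\subsetneq D$, so that $D/A$ is a nonzero left $A$-module. Choose a simple quotient $T$ of $D/A$, and let $\phi\colon D\to T$ be the composition $D\twoheadrightarrow D/A\twoheadrightarrow T$. Then $\phi$ is a nonzero surjective $A$-homomorphism, and since $A\subseteq D$ maps to zero in $D/A$, we have $\phi|_A=0$; in particular $\phi(1_D)=\phi(1_A)=0$.

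The next step is to identify $T$ as the restriction of some simple $D$-module. Pick a composition series $0=D_0\subset D_1\subset\cdots\subset D_n=D$ of $D$ as a left $D$-module, so each factor $D_i/D_{i-1}$ is a simple $D$-module $S_i$. By the standing hypothesis, each $\Res^D_A(S_i)$ is a simple $A$-module, so the very same filtration is a composition series of $D$ as a left $A$-module. Hence the Jordan--H\"older factors of $D$ viewed as an $A$-module are precisely $\Res^D_A(S_1),\dots,\Res^D_A(S_n)$. Because $T$ is a composition factor of the $A$-quotient $D/A$ of $D$, Jordan--H\"older forces $T\cong \Res^D_A(S)$ for some simple $D$-module $S$.

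Now I invoke the fullness of $\Res^D_A$: the $A$-homomorphism $\phi\colon \Res^D_A(D)\to \Res^D_A(S)$ lifts to a $D$-homomorphism $\tilde\phi\colon D\to S$. As a $D$-linear map out of the free rank-one $D$-module $D$, the map $\tilde\phi$ is determined by $\tilde\phi(1_D)$; but $\tilde\phi(1_D)=\phi(1_D)=0$, so $\tilde\phi=0$. Faithfulness of $\Res^D_A$ then yields $\phi=0$, contradicting surjectivity of $\phi$ onto the nonzero module $T$. Thus $A=D$. The only non-formal step is the identification of $T$ with the restriction of a simple $D$-module; once Jordan--H\"older has been applied to the $D$-composition series of $D$, the remainder is a direct application of fully faithfulness together with the unital condition $1_A=1_D$.
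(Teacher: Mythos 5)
Your proof is correct and takes a genuinely different route from the paper's. The paper argues structurally: it first shows $J(A)=A\cap J(D)$, then that $\Res^D_A$ induces a bijection on isomorphism classes of simple modules with matching endomorphism rings, invokes Artin--Wedderburn to conclude $A/J(A)\cong D/J(D)$ (so $D=A+J(D)$), then shows $J(A)D=J(D)$, and finally applies Nakayama's Lemma to the $A$-module $D$. Your argument bypasses Artin--Wedderburn and Nakayama entirely: you observe that a $D$-composition series of the regular module $D$ is already an $A$-composition series of $\Res^D_A(D)$ (since restriction preserves simplicity), so by Jordan--H\"older any simple $A$-quotient $T$ of $D/A$ must be isomorphic to $\Res^D_A(S)$ for some simple $D$-module $S$; fullness then lets you lift the surjection $\phi\colon D\to T$ to a $D$-linear map $\tilde\phi\colon D\to S$, which must send $1_D=1_A$ to $0$ and hence vanish, a contradiction. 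This is shorter and more elementary, and it makes transparent exactly where the unital hypothesis $1_A=1_D$ enters. Two very minor remarks: the final appeal to faithfulness is unnecessary (once $\tilde\phi=0$, its restriction is $0$; faithfulness of restriction along an inclusion is automatic in any case), and you tacitly compose $\phi$ with the isomorphism $T\cong\Res^D_A(S)$ before lifting, which is fine but worth saying explicitly. Neither affects correctness.
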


\begin{proof}
We first show that $J(A)=$ $A\cap J(D)$. Since simple $D$-modules
restrict to simple $A$-modules,  it follows that $J(A)$ annihilates
every simple $D$-module, and hence $J(A)\subseteq$ $A\cap J(D)$.
Since $A\cap J(D)$ is a nilpotent ideal in $A$, we have the other
inclusion as well, whence the equality $J(A)=$ $A\cap J(D)$. Thus the
inclusion $A\subseteq$ $D$ induces an injective algebra homomorphism
$A/J(A)\to$ $D/J(D)$. Since $A/J(A)$ is semisimple, every $A/J(A)$-module
is injective, and hence $A/J(A)$ is isomorphic to a direct summand of
$D/J(A)$ as a right $A/J(A)$-module. Thus, for any simple $A$-module $S$,
the $D/J(D)$-module $D/J(D)\ten_{A/J(A)} S$ is nonzero. Regarded as a
left $D$-module, this is a quotient of $D\tenA S$. In particular $D\tenA S$ is
nonzero. Let $T$ be a simple quotient of $D\tenA S$ and let $D\tenA S\to$ $T$
be a nonzero $D$-homomorphism. The standard adjunction yields a nonzero
$A$-module homomorphism $S\to$ $\Res^D_A(T)$. Since $S$ and 
$\Res^D_A(T)$ are both simple $A$-modules, it follows that $S\cong$
$\Res^D_A(T)$. This shows that $\Res^D_A$ induces a bijection between 
the isomorphism classes of simple $D$-modules and simple $A$-modules.
Since $\Res^D_A$ is a full embedding, we also have $\End_D(T)=$ $\End_A(T)$.
Thus the simple modules for $D$ and $A$ which correspond to each other
through the bijection induced by $\Res^D_A$ have the same dimensions
and isomorphic endomorphism rings. The Artin-Wedderburn Theorem 
implies that $A/J(A)\cong$ $D/J(D)$, and hence $D=$ $A + J(D)$.

We show next that every maximal $A$-submodule of $D$ is
in fact a maximal $D$-submodule. Indeed, let $M$ be a maximal
$A$-submodule of $D$. Then $S=$ $D/M$ is a simple $A$-module.
By the previous argument, there is a simple $D$-module $T$ and an
$A$-module isomorphism $S\cong$ $\Res^D_A(T)$. The composition
of $A$-homomorphisms $D\to D/M=S\cong\Res^D_A(T)$ belongs to
$\Hom_A(D,T)=$ $\Hom_D(D,T)$, hence this composition is a 
$D$-homomorphism. The kernel of this $D$-homomorphism is $M$, and
hence $M$ is a maximal $D$-submodule of $D$. Conversely, any
maximal $D$-submodule $N$ of $D$ is a maximal $A$-submodule
since $\Res^D_A(D/N)$ remains simple. Taking the intersection
of all maximal submodules of $D$ as an $A$-module yields $J(A)D=J(D)$.

It follows that $D=$ $A + J(D)=$ $A + J(A)D$. Nakayama's Lemma,
applied to the $A$-module $D$, implies that $A=D$. 
\end{proof}

The converse of Lemma \ref{full-embed-Lemma2} holds trivially. One cannot
drop in this Lemma the hypothesis that $\Res^D_A$ sends simple modules 
to simple modules. Consider the subalgebra $A$ of upper triangular matrices 
in $D=M_2(k)$. The restriction from $D$ to $A$ of the unique (up to 
isomorphism) simple $D$-module is the unique (up to isomorphism) 
projective indecomposable $A$-module of dimension $2$. One verifies
easily that $\Res^D_A$ is a full embedding. (The inclusion $A\to D$ is 
thus an epimorphism in the category of rings; see 
Stenstr\"om  \cite[Chapter XI, Proposition 1.2]{Sten}.) 

\begin{Lemma} \label{embedLemma2}
Let $A$ be a finite-dimensional selfinjective algebra over a field $k$,
let $D$ be a finite-dimensional $k$-algebra, and let $Y$ be a finitely
generated $A$-$D$-bimodule. Suppose that the functor $Y\tenD-$ 
induces a full embedding $\mod(D)\to$ $\modbar(A)$. 
Then, for any finitely generated $D$-module $V$, the 
map $V\to$ $\Hombar_A(Y,Y\ten_D V)$ induced by the adjunction 
unit is an isomorphism of $D$-modules. 
In particular, the functor $\Hombar_A(Y,-) : \modbar(A)\to$ $\mod(D)$ is
a left inverse of the embedding $\mod(D)\to$ $\modbar(A)$ induced
by $Y\tenD-$. 
\end{Lemma}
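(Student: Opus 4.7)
The plan is to deduce the lemma almost directly from the full-embedding hypothesis by specialising the first argument to $U=D$. By assumption, the functor $Y\tenD-$ induces a $k$-linear isomorphism
$$\Hom_D(U,V) \xrightarrow{\ \cong\ } \Hombar_A(Y\tenD U, Y\tenD V)$$
for all finitely generated $D$-modules $U$ and $V$. Taking $U=D$ and using the standard isomorphism $\Hom_D(D,V)\cong V$ sending $g\mapsto g(1)$, together with the canonical identification $Y\tenD D\cong Y$ sending $y\ten d\mapsto yd$, a direct unwinding of the definitions shows that the resulting composite $V\to\Hombar_A(Y,Y\tenD V)$ sends $v$ to the class of the $A$-homomorphism $y\mapsto y\ten v$; this is exactly $\alpha_V$. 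Hence $\alpha_V$ is a $k$-linear isomorphism.

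To upgrade this to an isomorphism of $D$-modules, I would invoke naturality of the full-embedding isomorphism in its first argument. The left $D$-module structure on $\Hom_D(D,V)\cong V$ corresponds, via the identification above, to precomposition with the right multiplication map $\rho_d\colon D\to D$, $x\mapsto xd$, for $d\in D$. Under $Y\tenD-$ followed by the identification $Y\tenD D\cong Y$, this right multiplication on $D$ transforms into right multiplication $Y\to Y$, $y\mapsto yd$. Precomposition with this latter map is precisely the natural left $D$-action on $\Hombar_A(Y,Y\tenD V)$ induced by the right $D$-action on $Y$. Naturality of the full-embedding isomorphism in the first argument then shows that the two $D$-actions correspond under the bijection, and hence that $\alpha_V$ is $D$-linear.

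The final assertion follows at once, because naturality of $\alpha_V$ in $V$ assembles the family $(\alpha_V)_V$ into a natural isomorphism between the identity functor on $\mod(D)$ and the composite $\Hombar_A(Y,-)\circ(Y\tenD-)$. The main obstacle is nothing more than careful bookkeeping to check that the canonical isomorphisms $\Hom_D(D,V)\cong V$ and $Y\tenD D\cong Y$ are compatible with the $D$-actions on both sides; all substance of the lemma is already packaged into the Yoneda-type specialisation $U=D$ in the full-embedding hypothesis, with the $D$-module structure tracked by functoriality in $U$.
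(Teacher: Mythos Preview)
Your proof is correct and follows essentially the same approach as the paper: specialise the full-embedding isomorphism $\Hom_D(U,V)\cong\Hombar_A(Y\tenD U,Y\tenD V)$ to $U=D$ and combine with the canonical identification $V\cong\Hom_D(D,V)$. You supply more detail than the paper does on why the resulting map agrees with the adjunction unit and why it is $D$-linear (via naturality in the first variable), but the underlying idea is identical.
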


\begin{proof}
By the assumptions, for any two finitely generated $D$-modules $U$, 
$V$ the map $\Hom_D(U,V)\to$ $\Hombar_A(Y\tenD U, Y\tenD V)$ 
induced by the functor $Y\tenD-$ is an isomorphism. Specialising this 
isomorphism to $U=D$ and combining it with the canonical isomorphism 
$V\cong$ $\Hom_D(D,V)$  yields the result.
\end{proof}

The converse in Lemma \ref{embedLemma2} need not hold; the issue
is that the functor $\Hombar_A(Y,-)$ need not be right adjoint to the
functor induced by $Y\tenD-$. The Tensor-Hom adjunction 
induces a natural transformation between the induced bifunctors at
the level of the stable category $\modbar(A)$ (cf. Lemma 
\ref{stable-adj-Lemma} and Remark \ref{stable-adj-Remark}), but
this need not be an isomorphism.

\begin{Lemma} \label{modulefullembed}
Let $A$, $D$ be finite-dimensional $k$-algebras and let $\Phi :
\mod(D)\to \mod(A)$ be a full exact embedding sending simple
$D$-modules to simple $A$-modules. Then $\Phi$ is isomorphic to
a functor of the form $Y\tenD - : \mod(D)\to$ $\mod(A)$, where
$Y$ is an $A$-$D$-bimodule which is a progenerator as a right
$D$-module. Moreover, if  $I$ is the annihilator in $A$ of $Y$ as a left
$A$-module, then  $\Phi$ factors through an equivalence
$\Psi : \mod(D)\cong$ $\mod(A/I)$ and the inclusion functor
$\mod(A/I)\to$ $\mod(A)$.
\end{Lemma}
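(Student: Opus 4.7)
My plan is to mimic the classical Eilenberg--Watts argument. Set $Y := \Phi(D)$; since $\Phi$ is a $k$-linear full embedding, it induces a $k$-algebra isomorphism $D^{\op} = \End_D(D) \xrightarrow{\sim} \End_A(Y)$, which endows $Y$ with a right $D$-action commuting with its left $A$-action, hence the structure of an $A$-$D$-bimodule. As $Y$ lies in $\mod(A)$ and $A$ is a finite-dimensional $k$-algebra, $Y$ is finite-dimensional over $k$ and in particular finitely generated as a right $D$-module.

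For any finitely generated left $D$-module $V$ and each $v \in V$, let $\rho_v : D \to V$ denote the $D$-linear map $d \mapsto dv$. I would define a natural transformation
$$\eta_V : Y \tenD V \longrightarrow \Phi(V), \qquad y \otimes v \longmapsto \Phi(\rho_v)(y),$$
checking $D$-balancedness from the chosen bimodule structure on $Y$. The map $\eta_D$ is the identity of $Y$, so $\eta$ is an isomorphism on finitely generated free $D$-modules. Combining a finite $D$-presentation of $V$ with the exactness of $\Phi$ and the right exactness of $Y \tenD -$, the four-lemma propagates this to all finitely generated $V$, yielding $\Phi \cong Y \tenD -$.

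This isomorphism together with the exactness of $\Phi$ forces $Y$ to be flat, hence (being finite-dimensional over $k$) projective, as a right $D$-module. To upgrade $Y_D$ to a progenerator I would use the simples-to-simples hypothesis: for every simple left $D$-module $T$, $Y \tenD T = \Phi(T)$ is simple and in particular nonzero. Writing $D/J(D) \cong \prod_i M_{n_i}(k_i)$ and decomposing $Y/YJ(D)$ as a right $D/J(D)$-module, an elementary Wedderburn computation (the simple right module $S_j$ and the simple left module $T_i$ over $D/J(D)$ pair nontrivially via $\otimes$ iff $i = j$) shows that $Y \tenD T \neq 0$ for every simple left $D$-module $T$ is equivalent to every simple right $D/J(D)$-module appearing in $Y/YJ(D)$, i.e., to $Y_D$ being a generator. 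Setting $B := \End_D(Y)$, Morita theory then gives an equivalence $\Psi := Y \tenD - : \mod(D) \xrightarrow{\sim} \mod(B)$.

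Finally, left multiplication by $A$ on $Y$ provides a ring homomorphism $L : A \to B$ with $\ker(L) = I$, hence an injective homomorphism $A/I \hookrightarrow B$; and $\Phi$ factors as
$$\mod(D) \xrightarrow{\Psi} \mod(B) \xrightarrow{\Res^B_{A/I}} \mod(A/I) \hookrightarrow \mod(A),$$
where the last arrow is a full embedding. Since $\Psi$ is an equivalence and $\Phi$ is a full embedding, $\Res^B_{A/I}$ must also be a full embedding. Simple $B$-modules correspond under $\Psi$ to simple $D$-modules and are sent by $\Phi$ to simple $A$-modules, so their image in $\mod(A/I)$ consists of simple $A/I$-modules. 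Lemma \ref{full-embed-Lemma2} then forces $A/I = B$, giving the claimed equivalence $\Psi : \mod(D) \xrightarrow{\sim} \mod(A/I)$ and the factorization of $\Phi$ through it. I expect the main obstacle to be precisely this last identification $A/I = \End_D(Y)$: the full-embedding hypothesis on its own only supplies an inclusion $A/I \hookrightarrow \End_D(Y)$, and it is the simples-to-simples hypothesis, channelled through Lemma \ref{full-embed-Lemma2}, that upgrades this inclusion to an equality.
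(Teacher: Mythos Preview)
Your proposal is correct and follows essentially the same strategy as the paper's proof: realise $\Phi$ as $Y\tenD-$ via Eilenberg--Watts, show $Y_D$ is a progenerator, pass to the Morita-equivalent endomorphism ring $B=\End_{D^\op}(Y)$, observe that the $A$-action gives an injection $A/I\hookrightarrow B$, and then invoke Lemma~\ref{full-embed-Lemma2} to conclude $A/I=B$. The paper's proof is terser---it cites Eilenberg--Watts directly and asserts the progenerator property without justification---whereas you spell out the natural transformation $\eta$ explicitly and give a Wedderburn argument for the generator condition; both are fine and your added detail is correct.
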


\begin{proof}
The first statement is a special case of the Eilenberg-Watts Theorem:
since $\Phi$ is a full exact embedding, it is induced by tensoring with an
$A$-$D$-bimodule $Y$ which is flat as a right $D$-module. Since this is 
a functor between finite-dimensional module categories, preserving 
simple modules, it follows that $Y$ is a progenerator as a right 
$D$-module. Thus $D$ is Morita equivalent to $D'=$ 
$\End_{D^\op}(Y)$, via the functor from $\mod(D)$ to $\mod(D')$ 
 induced by $Y\tenD-$, with $Y$ here regarded as a $D'$-$D$-bimodule. 
The action of $A$ on $Y$ induces an algebra homomorphism $A\to D'$. 
Let $I$ be the annihilator 
of $Y$ in $A$. Then the algebra homomorphism $A/I\to$ $D'$ induced 
by the action of $A$ on $Y$ is injective. The functor $\Phi$ is the 
composition of the Morita equivalence $Y\tenD- : \mod(D)\to\mod(D')$ 
followed by the restriction functor along the injective algebra 
homomorphism $A/I\to D'$. By the assumptions, $\Phi$ preserves simple 
modules. Since $Y\tenD-$ induces an equivalence $\mod(D)\cong$ $\mod(D')$
it induces in particular a bijection between isomorphism classes of simple
$D$-modules and simple $D'$-modules.  It follows that simple 
$D'$-modules restrict to simple $A/I$-modules. 
Lemma \ref{full-embed-Lemma2} implies that $A/I\cong$ $D'$. Thus 
$\Phi$ factors through an equivalence $\mod(D)\to$ $\mod(A/I)$ as stated. 
\end{proof}

\begin{proof}[{Proof of Theorem \ref{modAmodDsimple}}]
Since $Y\tenD-$ induces a full embedding $\mod(D)\to$ $\modbar(A)$, it 
follows from Lemma \ref{embedLemma2}  that for any finitely generated 
$D$-modules $V$, we have an isomorphism 
$$V\cong \Hom_D(D,V) \cong \Hombar_A(Y,Y\tenD V)$$
sending $v\in$ $V$ to the image of the map $y\mapsto (y\ten v)$,
where $y\in$ $Y$. Arguing by induction over $\dim_k(V)$, we will show
that $\Hom_A^\pr(Y,Y\tenD V)=\{0\}$. If $V$ is simple, then by the
assumptions, $Y\tenD V$ is simple. Since $Y\tenD V$ is a quotient of $Y$
and since $Y$ has no nonzero projective summand as an $A$-module,
it follows that the simple $A$-module $Y\tenD V$ is nonprojective and
hence that $\Hom_A^\pr(Y,Y\tenD V)=\{0\}$. Let
$$\xymatrix{0\ar[r] & U \ar[r] & V \ar[r] & W\ar[r] & 0}$$
be a short exact sequence of nonzero $D$-modules. This sequence is 
isomorphic to the short exact sequence 
$$\xymatrix{0\ar[r] & \Hom_D(D,U) \ar[r] & \Hom_D(D,V) \ar[r] & 
\Hom_D(D,W) \ar[r] & 0}$$
Since $Y\tenD -$ induces a full embedding $mod(D)\to$ $\modbar(A)$, this
yields an exact sequence
$$\xymatrix{0\ar[r] & \Hombar_A(Y,Y\tenD U) \ar[r] & 
\Hombar_A(Y,Y\tenD V) \ar[r] &  \Hombar_A(Y,Y\tenD W) \ar[r] & 0}$$
Since $Y\tenD-$ is eaxct, the first exact sequence yields an exact sequence
$$\xymatrix{0\ar[r] & Y\tenD U \ar[r] & Y\tenD V \ar[r] & Y\tenD 
W\ar[r] & 0}$$
and applying the left exact functor $\Hom_A(Y,-)$ yields an exact sequence
$$\xymatrix{0\ar[r] & \Hom_A(Y,Y\tenD U) \ar[r] & \Hom_A(Y,Y\tenD V) 
\ar[r] & \Hom_A(Y,Y\tenD W)}$$
Thus we have a commutative exact diagram of the form
$$\xymatrix{0\ar[r] & \Hom_A(Y,Y\tenD U) \ar[r] \ar[d] & 
\Hom_A(Y,Y\tenD V)  \ar[r] \ar[d] & \Hom_A(Y,Y\tenD W) \ar[d] & \\
0\ar[r] & \Hombar_A(Y,Y\tenD U) \ar[r] & 
\Hombar_A(Y,Y\tenD V) \ar[r] &  \Hombar_A(Y,Y\tenD W) \ar[r] & 0
 }$$
where the vertical maps are the canonical surjections. Arguing by
induction, the left and right vertical maps are isomorphisms. Thus
the top right horizontal map is surjective, and comparing dimensions
implies that the middle vertical map is an isomorphism as well.
This shows that $\Hom_A^\pr(Y,Y\tenD V)=$ $\{0\}$ for all 
finitely generated $D$-modules $V$. Applied to $V=D$ this implies
that $\End_A^\pr(Y)=\{0\}$. By the first paragraph, this also implies
that the canonical map $V\to$ $\Hom_A(Y,Y\tenD V)$ is an 
isomorphism for all $V$. By Theorem \ref{full-embeddings},
the functor $Y\tenD - $ induces a full embedding $\mod(D)\to$ $\mod(A)$,
and by the assumptions, this embedding sends simple $D$-modules to
simple $A$-modules. Since $I$ is the annihilator in $A$ of $Y$, it follows
from Lemma \ref{modulefullembed}, that the full embedding
$Y\tenD - : \mod(D)\to$ $\mod(A)$ factors through an equivalence
$\mod(D)\cong$ $\mod(A/I)$. By the assumptions, the functor
$Y\tenD -$ induces a full embedding $\mod(D)\to$ $\modbar(A)$. Thus
the inclusion $\mod(A/I)\to$ $\mod(A)$ induces a full embedding
$\mod(A/I)\to$ $\modbar(A)$ as distinguished abelian subcategory.
The inclusion $r(I)\subseteq$ $I$ follows from  Theorem \ref{AmodIembedding1},
whence the result.
\end{proof}

\begin{Example} \label{AJA-example}
Let $A$ be a finite-dimensional selfinjective $k$-algebra. Suppose that 
the simple $A$-modules are non-projective. Then $J(A)$ contains its 
right annihilator $\soc(A)$ in $A$. Thus the subcategory of all 
semisimple $A$-modules, which is equivalent to
$\mod(A/J(A))$, is a distinguished abelian subcategory of $\modbar(A)$.
We have $\ell(A)=$ $\ell(A/J(A))$, so for trivial reasons, $\modbar(A)$
has distinguished abelian subcategories $\CD$ whose number of 
isomorphism classes $\ell(\CD)$ of simple objects in $\CD$ is equal to
the number $\ell(A)$ of isomorphism classes of simple $A$-modules.
\end{Example}

\begin{Example} \label{AJA2-example}
Let $A$ be a finite-dimensional selfinjective $k$-algebra.
Suppose that $\soc^2(A)\subseteq$ $J(A)^2$. Since $\soc^2(A)$ is the
right annihilator of $J(A)^2$, it follows from Theorem 
\ref{AmodIembedding1} that the composition of canonical functors 
$\mod(A/J(A)^2)\to \mod(A)\to \modbar(A)$ is an embedding of 
$\mod(A/J(A)^2)$ as a distinguished abelian subcategory in $\modbar(A)$.
If $A$ is indecomposable as an algebra, then so is $A/J(A)^2$, and both
have the same quiver. Therefore, in this situation, $\modbar(A)$ has a
{\it connected} distinguished abelian subcategory $\CD=$ 
$\mod(A/J(A)^2)$ satisfying $\ell(\CD)=$ $\ell(A)$ and $\Ext^1_\CD(S,T)
\cong$ $\Ext^1_\CC(S,T)$, for any two simple objects $S$, $T$ in $\CD$. 
\end{Example}

\begin{Remark} \label{soc2A-Remark}
The property $\soc^2(A)\subseteq$ $J(A)^2$ in the previous Example
is not invariant under stable equivalences, in fact, not even 
under derived equivalences. For instance, the Brauer tree algebra of a 
star with four edges has this property, but the Brauer tree algebra of a 
line with four edges (and no exceptional vertex) does not. 
\end{Remark}

%%%%%%%%%%%%%%%%%%%%%%%%%%%%%%%%%%%%%%%%%%%%%%%%%%%%%%%%%%%%%%%%%%%
\section{Distinguished abelian subcategories for finite group algebras}
\label{groupSection}

We describe special cases of the situation arising in Theorem 
\ref{AmodIembedding1} involving finite group algebras. We use without
further comments standard properties of finite $p$-group algebras in 
prime characteristic $p$; see e. g. \cite[Section 1.11]{LiBookI}.

\begin{Theorem} \label{AkZexample}
Let $k$ be a field of prime characteristic $p$ and $A$ a 
finite-dimensional selfinjective $k$-algebra. Suppose that $Z(A)^\times$ 
has a nontrivial finite $p$-subgroup $Z$ such that $A$ is projective as 
a $kZ$-module. Set $I=$ $I(kZ)\cdot A$, where $I(kZ)$ is the 
augmentation ideal of $kZ$. Then $I$ contains its right annihilator in 
$A$. In particular, restriction along the canonical surjection $A\to$ 
$A/I$ induces a full embedding $\mod(A/I)\to$ $\modbar(A)$ of 
$\mod(A/I)$ as a distinguished abelian  subcategory in $\modbar(A)$.
\end{Theorem}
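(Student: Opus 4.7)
The plan is to reduce the statement to Theorem \ref{AmodIembedding1} by establishing the inclusion $r(I)\subseteq I$, and to do so by computing $r(I)$ explicitly using the freeness of $A$ over $kZ$ together with the standard socle description of a finite $p$-group algebra in characteristic $p$.

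First I would observe that since $Z\subseteq Z(A)^\times$, we have $kZ\subseteq Z(A)$ and in particular $I(kZ)\subseteq Z(A)$, so $I=I(kZ)\cdot A=A\cdot I(kZ)$ is a two-sided ideal of $A$. Next, because $kZ$ is local (as $Z$ is a finite $p$-group and $\chr(k)=p$), the projectivity hypothesis implies that $A$ is free as a $kZ$-module, say $A\cong (kZ)^n$ as left (hence as $kZ$-$kZ$-bi-) module.

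The core step is to compute $r(I)=\{a\in A\,|\,I\cdot a=0\}$. Given $a\in A$, the condition $I\cdot a=0$ reads $I(kZ)\cdot A\cdot a=0$; applying this to $1_A\in A$ gives $I(kZ)\cdot a=0$, while conversely $I(kZ)\cdot a=0$ yields $I\cdot a=A\cdot I(kZ)\cdot a=0$ using centrality. Hence $r(I)$ equals the annihilator of $I(kZ)$ acting on $A$ viewed as a $kZ$-module. Since $kZ$ is a commutative local Frobenius algebra with $\soc(kZ)=k\hat{Z}$, where $\hat{Z}=\sum_{z\in Z}z$, and since the annihilator of $I(kZ)$ in a free $kZ$-module $(kZ)^n$ is exactly $\soc(kZ)^n=\soc(kZ)\cdot(kZ)^n$, I conclude that $r(I)=\soc(kZ)\cdot A=\hat{Z}\cdot A$.

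Finally, because $Z$ is nontrivial and of $p$-power order with $\chr(k)=p$, the augmentation of $\hat{Z}$ equals $|Z|=0$ in $k$, so $\hat{Z}\in I(kZ)$; therefore $r(I)=\hat{Z}\cdot A\subseteq I(kZ)\cdot A=I$, and the desired embedding as a distinguished abelian subcategory follows immediately from the equivalence (i)$\Leftrightarrow$(ii) in Theorem \ref{AmodIembedding1}. The only real subtlety is the computation of $r(I)$; everything else is a direct appeal to the structure theory of $p$-group algebras and to the already established Theorem \ref{AmodIembedding1}.
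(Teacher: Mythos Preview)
Your proof is correct and follows essentially the same approach as the paper: both arguments use freeness of $A$ over $kZ$ to identify $r(I)$ with $\soc(kZ)\cdot A=\hat{Z}\cdot A$, observe that $\hat{Z}\in I(kZ)$ since $|Z|\equiv 0$ in $k$, and then invoke Theorem~\ref{AmodIembedding1}. Your write-up is slightly more explicit about why projectivity implies freeness and why $\hat{Z}$ lies in the augmentation ideal, but the underlying argument is the same.
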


\begin{proof}
The right annihilator of $I(kZ)$ in $kZ$ is the 
$1$-dimensional ideal $\soc(kZ)=$ $(\sum_{z\in Z} z)\cdot kZ$, and we 
have $\soc(kZ)\subseteq$ $I(kZ)$. Since $A$ is a free left or right 
$kZ$-module, an easy argument shows that the right annihilator of 
$I(kZ)\cdot A=$ $A\cdot I(kZ)$ is therefore $\soc(kZ)\cdot A=$ 
$A\cdot \soc(kZ)$, which is contained in $I(kZ)\cdot A$. Thus the 
statement is the special case of Theorem \ref{AmodIembedding1} with 
$I=$ $I(kZ)\cdot A$.
\end{proof}

Let $G$ be a finite group. 
The module category $\mod(kG)$ of the finite group algebra $kG$ over
a field $k$ is a symmetric monoidal category with respect to the tensor 
product $-\ten_k-$ of $kG$-modules over $k$. It is well-known that if 
$U$, $V$ are finitely generated $kG$-modules with at least one of $U$, 
$V$ projective, then $U\tenk V$ is projective as well. Therefore the 
tensor product over $k$ induces a commutative monoidal structure on the 
triangulated category $\modbar(kG)$. If $N$  is a normal subgroup of 
$G$, then the canonical surjection $G\to$ $G/N$ induces an embedding of 
symmetric monoidal categories $\mod(kG/N)\to$ $\mod(kG)$. The
following result implies the first statement in Theorem \ref{kGmodN}. 

\begin{Theorem} \label{kGexample}
Let $k$ be a field of prime characteristic $p$ and $G$ a finite group. 
Let $N$ be a normal subgroup of $G$. Restriction along 
the canonical surjection $G\to$ $G/N$ induces a full embedding 
$\mod(kG/N)\to$ $\modbar(kG)$ of $\mod(kG/N)$ as a symmetric monoidal 
distinguished abelian subcategory in $\modbar(kG)$ if and only
if $p$ divides the order of $N$.
\end{Theorem}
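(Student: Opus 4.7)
The plan is to realise this as a special case of Theorem~\ref{AmodIembedding1}, applied to the ideal
\[
I\ =\ I(kN)\cdot kG\ \subseteq\ kG,
\]
where $I(kN)$ denotes the augmentation ideal of $kN$. Normality of $N$ in $G$ immediately gives $kG\cdot I(kN)=I(kN)\cdot kG$, so $I$ is a two-sided ideal, and in fact $I=\ker(kG\to k(G/N))$; hence $kG/I\cong k(G/N)$ and the canonical functor of the statement factors as $\mod(kG/N)\to\mod(kG)\to\modbar(kG)$. Since $I$ is the kernel of a Hopf algebra homomorphism, it is a Hopf ideal in the cocommutative Hopf algebra $kG$, so Corollary~\ref{AmodIembeddingHopf} combined with cocommutativity of $kG$ will upgrade any resulting distinguished abelian embedding to a symmetric monoidal one. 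Therefore the theorem reduces to proving that $r(I)\subseteq I$ if and only if $p\mid|N|$.

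The main step will be the computation of $r(I)$. I would fix a right transversal $T$ of $N$ in $G$ and expand each $x\in kG$ uniquely as $x=\sum_{t\in T}x_t t$ with $x_t\in kN$. Since $1\in kG$ and $I=kG\cdot I(kN)$, the condition $I\cdot x=0$ is equivalent to $I(kN)\cdot x=0$ in $kG$; the freeness of $kG$ as a right $kN$-module then separates this into the conditions $I(kN)\cdot x_t=0$ in $kN$ for every $t\in T$. A short argument, namely that any $y\in kN$ with $(n-1)y=0$ for all $n\in N$ is fixed under left multiplication by $N$ and hence a scalar multiple of $\sigma:=\sum_{n\in N}n$, identifies the right annihilator of $I(kN)$ in $kN$ as the line $k\sigma$. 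This should yield $r(I)=\sigma\cdot kG$.

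Given $r(I)=\sigma\cdot kG$, the final dichotomy is immediate. If $p\mid|N|$, then $\sigma=\sum_{n\in N}(n-1)\in I(kN)\subseteq I$, hence $r(I)\subseteq I$; if $p\nmid|N|$, then the image of $\sigma$ in $kG/I\cong k(G/N)$ equals $|N|\cdot 1_{G/N}\neq 0$, so $\sigma\notin I$ and $r(I)\not\subseteq I$. Theorem~\ref{AmodIembedding1} then delivers both implications. The only genuine technical point is the transversal-based calculation of $r(I)$; the symmetric monoidal enhancement is formal from the Hopf-ideal structure, and the result generalises Theorem~\ref{AkZexample}, which treats the special case where $N=Z$ is central and a $p$-group.
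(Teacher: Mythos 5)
Your proposal is correct, and the core of the argument is the same as the paper's: identify $I=kG\cdot I(kN)=\ker\bigl(kG\to k(G/N)\bigr)$, compute $r(I)=\bigl(\textstyle\sum_{n\in N}n\bigr)\cdot kG$ by decomposing $kG$ over a transversal of $N$, and reduce the dichotomy to whether $\sigma=\sum_{n\in N}n$ lies in $I$. Where you differ from the paper: for the ``only if'' direction ($p\nmid|N|$) you simply observe $\sigma\notin I$ (since $\sigma\mapsto|N|\cdot 1\neq 0$ in $k(G/N)$) and invoke the full equivalence in Theorem~\ref{AmodIembedding1}, which is clean and uniform. The paper instead argues directly that $kG/N$ is a projective $kG$-module when $p\nmid|N|$, so $\Endbar_{kG}(kG/N)=0$ and the functor cannot be faithful; it also sketches a second variant via Higman's criterion. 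The paper's version makes the obstruction more explicit and is independent of the annihilator computation, yours is shorter since you've already done the work. For the symmetric monoidal enhancement you route through Corollary~\ref{AmodIembeddingHopf} (Hopf ideal) plus cocommutativity of $kG$, whereas the paper just observes directly that restriction along $G\to G/N$ and the passage $\mod(kG)\to\modbar(kG)$ are both symmetric monoidal functors; both are valid and essentially the same observation in different clothing. One small slip: with $T$ a right transversal and $x=\sum_{t\in T}x_t\,t$, $x_t\in kN$, the decomposition $kG=\bigoplus_{t\in T}kN\cdot t$ exhibits $kG$ as a free \emph{left} $kN$-module, not right, which is what makes $I(kN)\cdot x=\sum_t\bigl(I(kN)x_t\bigr)t$ decompose componentwise; this is only a labeling error and does not affect the computation.
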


\begin{proof}
The fact that the functor $\mod(kG/N)\to$ $\modbar(kG)$ is a functor
of symmetric monoidal categories is obvious (see the remarks preceding
the Theorem). We need to show that this induces an embedding as a 
distinguished abelian subcategory in $\modbar(kG)$ if and only if
$|N|$ is divisible by $p$. 
The kernel of the canonical algebra homomorphism $kG\to$ $kG/N$ is equal 
to $I=$ $kG\cdot I(kN)$, where $I(kN)$ is the augmentation ideal of 
$kN$. Arguing as in the previous proof, the right annihilator of $I(kN)$ 
in $kN$ is the $1$-dimensional ideal $(\sum_{y\in N}\ y) kN$. This is 
contained in $I(kN)$ if and only if $p$ divides $|N|$. Indeed, if $p$ 
divides $|N|$, then $\sum_{y\in N} y = $ $\sum_{y\in N} (y-1)\in$ 
$I(kN)$. If $p$ does not divide $|N|$, then $(\sum_{y\in N}\ y) kN$ is 
a complement of $I(kN)$ in $kN$. Since $kG$ is free as a right 
$kN$-module of rank $|G:N|$, it follows that the right annihilator of 
$I$ is equal to $kG\cdot(\sum_{y\in N} y)$. Therefore, if $p$ divides 
$|N|$, then the right anihilator of $I$ is contained in $I$ by the 
previous argument. The result follows in that case from Theorem 
\ref{AmodIembedding1}. 
If $|N|$ is prime to $p$, then $kG/N$ is a projective $kG$-module, so 
$\Endbar_{kG}(kG/N)$ vanishes, and in particular, the canonical
functor $\mod(kG/N)\to$ $\modbar(kG)$ is not an embedding.

Alternatively, one can also show this using a special case of Higman's 
criterion. Let $U$, $V$ be $kG/N$-modules. When regarded as 
$kG$-modules, the elements of $N$ act as identity on $U$, $V$. Thus any 
$k$-linear map $\tau : U\to$ $V$ is a $kN$-homomorphism, and 
$\Tr^G_1(\tau)=$ $|N|\Tr^G_N(\tau)$. If $p$ divides $|N|$, then
this expression is zero in $k$. It follows from the special case 
\cite[Proposition 2.13.11]{LiBookI} of Higman's criterion that 
$\Hom_{kG}^\pr(U,V)=$ $\{0\}$. 
Equivalently, we have $\Hombar_{kG}(U,V)\cong$ $\Hom_{kG}(U,V)=$
$\Hom_{kG/N}(U,V)$. This shows that if $p$ divides $|N|$, then 
$\mod(kG/N)$ can indeed be identified canonically with a full 
subcategory of $\modbar(kG)$. 
Note that $kG/N$ is a projective $kG/N$-module. Thus every
$kG/N$-endomorphism of $kG/N$ is equal to $\Tr^{G/N}_1(\sigma)$ for
some linear endomorphism $\sigma$ of $kG/N$. Equivalently, every
$kG$-endomorphism of $kG/N$ is of the form $\Tr^G_N(\sigma)$ for some
$kN$-endomorphism $\sigma$ of $kG/N$. If $|N|$ is coprime to $p$, then 
$\sigma=$ $\frac{1}{|N|}\Tr^N_1(\sigma)$, hence $\tau=$ 
$\Tr^G_1(\frac{1}{|N|}\sigma)$, which shows that $\tau$ factors through
a projective $kG$-module. Equivalently, the canonical map
$\End_{kG/N}(kG/N)\to$ $\Endbar_{kG}(kG/N)$ is zero. This shows that 
if $|N|$ is coprime to $p$, then the canonical functor $\mod(kG/N)\to$ 
$\modbar(kG)$ is not an embedding.
\end{proof}

\begin{Remark} \label{normalpRemark}
Let $k$ be a field of prime characteristic $p$.
\begin{enumerate}
\item
Let $G$ be a finite group having a nontrivial normal $p$-subgroup $Q$. 
It is well-known that the kernel $I$ of the canonical algebra 
homomorpism $kG\to$ $kG/Q$ is contained in the radical $J(kG)$ and hence 
that $\ell(kG)=$ $\ell(kG/Q)$. Thus Theorem \ref{kGexample} illustrates
Theorem \ref{AmodIembeddings}, constructing explicitly 
the distinguished abelian  subcategory $\mod(kG/Q)$ of $\modbar(kG)$
whose number of isomorphism classes of simple  objects is equal to that 
of $\mod(kG)$.

\item
Theorem \ref{kGexample} implies that if $P$ is a nontrivial finite 
$p$-group, then any cyclic subgroup of $Z(P)$ yields a distinguished 
abelian subcategory of $\modbar(kP)$. But then so does any shifted 
cyclic subgroup of $Z(P)$, suggesting that distinguished abelian 
subcategories should form varieties which are related to cohomology 
support varieties. 
\end{enumerate}
\end{Remark}

Combining Theorem \ref{modAmodDsimple}, a result of J. F. Carlson 
\cite[Theorem 1]{Car98}, and \cite[Theorem 3.4]{Listable} yields the 
following classification of those distinguished abelian  subcategories 
of the stable module category of a finite $p$-group algebra in prime 
characteristic $p$ which are equivalent to module categories of 
finite-dimensional split $k$-algebras.
If $P$ is a finite $p$-group, then a finiteley generated $kP$-module
$V$ is called {\it endotrivial} if $V\tenk V^*\cong$ $k \oplus U$ for
some projective $kP$-module. If $V$ is endotrivial, then $V\tenk -$ and 
$V^*\tenk-$ induce inverse equivalences on $\modbar(kP)$. In particular,
$V\tenk-$ sends in that case any distinguished abelian subcategory
$\CD$ of $\modbar(kP)$ to a distinguished abelian subcategory,
denoted $V\tenk \CD$, of $\modbar(kP)$.

\begin{Theorem} \label{Pgroupdist}
Let $p$ be a prime, $P$ a nontrivial finite $p$-group and $k$ a field 
of characteristic $p$. Let $D$ be a finite-dimensional split basic
$k$-algebra such that there is an embedding $\Phi : \mod(D)\to$ 
$\modbar(kP)$ as distinguished abelian subcategory of $\modbar(kP)$.

\begin{enumerate}
\item[{\rm (i)}] 
We have $\ell(D)=1$; that is, $D$ is split local.

\item[{\rm (ii)}] 
Let $V$ is an indecomposable $kP$-module corresponding to a simple
$D$-module under the functor $\Phi$. Then $V$ is an endotrivial
$kP$-module.

\item[{\rm (iii)}]
If $\Phi$ is induced by a functor $Y\ten_D-$ for some finitely
generated $kP$-$D$-bimodule $Y$ which is projective as a right
$D$-module, then there is an ideal $I$ of $kP$ containing its right
annihilator in $kP$ such that $D\cong$ $(kP)/I$ and such that
$\Phi$ induces an equivalence between $\mod(D)$ and the distinguished
abelian subcategory $V\tenk \mod((kP)/I)$. 
\end{enumerate}
\end{Theorem}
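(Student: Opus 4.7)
The plan is to prove (ii) first, deduce (i) from it, and then settle (iii).  For (ii), let $T$ be a simple $D$-module and set $V=\Phi(T)$.  Fullness of $\Phi$ together with the split basic hypothesis on $D$ gives $\Endbar_{kP}(V)\cong\End_D(T)\cong k$.  After discarding a projective summand (invisible in $\modbar(kP)$), I may assume $V$ is indecomposable and non-projective in $\mod(kP)$.  By Carlson's theorem \cite[Theorem 1]{Car98}, an indecomposable non-projective $kP$-module with one-dimensional stable endomorphism ring is endotrivial, proving (ii).

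For (i), I pick any simple $D$-module $T_0$ and set $V=\Phi(T_0)$, endotrivial by (ii).  Then $V^*\tenk-$ is a triangulated self-equivalence of $\modbar(kP)$, and any triangulated auto-equivalence permutes distinguished abelian subcategories (compare Remark \ref{Pic-embedding-Remark}).  Hence $\Psi:=V^*\tenk\Phi$ is a full embedding of $\mod(D)$ as a distinguished abelian subcategory of $\modbar(kP)$, and $\Psi(T_0)\cong V^*\tenk V\cong k$ in $\modbar(kP)$.  Since $k$ is the unique (and non-projective) simple $kP$-module, the subcategory $\Psi(\mod(D))$ contains every simple $kP$-module, so Theorem \ref{AmodIembeddings} forces its simple objects to be exactly $\{k\}$.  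The equivalence $\Psi$ then gives $\ell(D)=1$; together with $D$ being split basic, $D$ is split local.

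For (iii), $D$ has a unique simple module $T$, and $V=\Phi(T)$ is endotrivial.  Set $\Psi=V^*\tenk\Phi$ as above.  The plan is to apply Theorem \ref{modAmodDsimple} to a bimodule inducing $\Psi$ in order to produce an ideal $I\subseteq kP$ with $r(I)\subseteq I$, an equivalence $\mod(D)\cong\mod((kP)/I)$ realising $\Psi$, and consequently an algebra isomorphism $D\cong(kP)/I$ (using that both $D$ and the quotient of the local algebra $kP$ are basic).  Tensoring back by $V$ then yields $\Phi(\mod(D))=V\tenk\mod((kP)/I)$.

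The natural candidate bimodule is $\tilde Y:=V^*\tenk Y$, with diagonal left $kP$-action and right $D$-action inherited from $Y$; it is finitely generated, projective as a right $D$-module, and induces $\Psi$ on $\modbar(kP)$.  However, $\tilde Y\tenD T=V^*\tenk V$ decomposes in $\mod(kP)$ as $k$ plus a projective $kP$-module, so the simplicity hypothesis of Theorem \ref{modAmodDsimple} fails for $\tilde Y$.  The main obstacle is therefore to replace $\tilde Y$ by a bimodule $\tilde Y^\flat$ having no nonzero projective direct summand as left $kP$-module, satisfying $\tilde Y^\flat\tenD T\cong k$ in $\mod(kP)$, and still inducing $\Psi$ on $\modbar(kP)$ up to natural isomorphism.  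This stripping of projective $kP$-summands is delicate because such summands of $\tilde Y$ need not be preserved by the right $D$-action; one route is to transport the $D$-action along a stable isomorphism with a projective-free representative via the adjunction of Lemma \ref{embedLemma2}.  Once such a $\tilde Y^\flat$ is produced, Theorem \ref{modAmodDsimple} applied to it yields $I=\ann_{kP}(\tilde Y^\flat)$ and completes the argument.
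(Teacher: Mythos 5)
Your proofs of (i) and (ii) are correct, and they take a genuinely different route from the paper. The paper proves (i) \emph{first}, via Lemma \ref{kPdistone} (whose force comes from \cite[Theorem 3.4]{Listable}, applied to two nonisomorphic simple objects of a split distinguished abelian subcategory), and only then deduces (ii) from Carlson's theorem applied to the now-unique simple object. You reverse this: you establish (ii) directly from fullness of $\Phi$, split-ness of $D$, and Carlson's characterisation, and then get (i) by twisting $\Phi$ with the self-equivalence $V^*\tenk -$ and invoking Theorem \ref{AmodIembeddings} once the trivial module $k$ lies in the twisted image. Both arguments are valid; yours avoids Lemma \ref{kPdistone} and trades it for Theorem \ref{AmodIembeddings}, which is arguably the more elementary input, at the cost of needing to twist.

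For (iii), however, what you have submitted is a proof \emph{plan}, not a proof. You correctly identify that the natural bimodule $\tilde Y=V^*\tenk Y$ inducing $\Psi=V^*\tenk\Phi$ fails the hypotheses of Theorem \ref{modAmodDsimple}: $\tilde Y\tenD T\cong V^*\tenk V\cong k\oplus(\text{projective})$ is not a simple $kP$-module unless $\dim_k V=1$, and $\tilde Y$ will in general carry projective left $kP$-summands that need not be $D$-stable, so the decomposition of $\tilde Y$ into a projective-free part plus a projective part cannot be performed naively at the bimodule level. This is a genuine obstruction. You sketch ``one route'' (transporting the $D$-action along the stable isomorphism via Lemma \ref{embedLemma2}) but do not execute it, and the details are exactly where the difficulty lies: a stable isomorphism $\tilde Y\to Y_0$ with $Y_0$ projective-free gives an algebra map $D^{\op}\to\Endbar_{kP}(Y_0)$, and one needs to lift this to $\End_{kP}(Y_0)$, which is not automatic. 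Until that step is carried out, your argument for (iii) is incomplete. It is worth noting that the paper's own proof of (iii) also passes over precisely this point (it asserts without further justification that after twisting ``the functor $Y\tenD-$ sends a simple $D$-module to the trivial $kP$-module''), so you have put your finger on a real subtlety; but from the standpoint of reviewing your proposal, (iii) is not yet proved.
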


The first statement of Theorem \ref{Pgroupdist} holds slightly more
generally, based on the following observation which is a consequence
of the proof of \cite[Theorem 3.4]{Listable}. Slightly extending 
standard terminology from finite-dimensional algebras, if $\CC$ is
a $k$-linear triangulated category, then a distinguished abelian 
subcategtory $\CD$ of $\CC$ is called {\it split} if for every object 
$X$ in $\CD$ which is simple as an object of $\CD$ we have 
$\End_\CD(X)\cong$ $k$. 

\begin{Lemma} \label{kPdistone}
Let $p$ be a prime, $P$ a nontrivial finite $p$-group, and suppose that
$\chr(k)=$ $p$. Let $\CD$ be a split distinguished abelian subcategory
of $\modbar(kP)$ such that $\CD$ has a simple object. Then $\ell(\CD)=1$.
\end{Lemma}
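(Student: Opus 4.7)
The plan is to fix a simple object $S$ of $\CD$ and show that every simple object of $\CD$ is isomorphic to it in $\modbar(kP)$. The starting observation is that $\CD$ is a full subcategory of $\modbar(kP)$, so the splitness hypothesis yields $\Endbar_{kP}(S) = \End_\CD(S) = k$ for every simple object $S$ of $\CD$.

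First I would invoke Carlson's theorem \cite[Theorem 1]{Car98}: for a nontrivial finite $p$-group $P$ in characteristic $p$, a finitely generated $kP$-module $V$ with $\Endbar_{kP}(V) = k$ is endotrivial. Applied to each simple object of $\CD$, this shows that every simple object of $\CD$ is endotrivial as a $kP$-module, and in particular invertible in $\modbar(kP)$ under $-\tenk -$.

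Next, suppose that $S_1$ and $S_2$ are two simple objects of $\CD$. Since $\CD$ is abelian and the $S_i$ are simple, $\Hom_\CD(S_1,S_2)=0$ whenever $S_1\not\cong S_2$; by fullness, $\Hombar_{kP}(S_1,S_2)=0$ in that case. Since $S_1$ is endotrivial, $S_1\tenk S_1^* \cong k$ in $\modbar(kP)$, so
$$\Hombar_{kP}(S_1,S_2) \;\cong\; \Hombar_{kP}\bigl(k,\, S_1^*\tenk S_2\bigr) \;=\; \hat H^0\bigl(P,\, S_1^*\tenk S_2\bigr).$$
The module $W = S_1^*\tenk S_2$ is again endotrivial, being a tensor product of endotrivials, and the aim is to deduce from the vanishing of $\hat H^0(P,W)$ that $W\cong k$ in $\modbar(kP)$, hence $S_1\cong S_2$. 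This last implication is precisely what must be extracted from the proof of \cite[Theorem 3.4]{Listable}, which addresses the interaction between endotriviality and the triviality of Tate cohomology in degree zero.

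The main obstacle is this final implication. Establishing it rigorously is not a formal consequence of what comes before; it requires either a direct Tate-cohomological calculation or an appeal to the structural classification of endotrivial $kP$-modules that underlies \cite[Theorem 3.4]{Listable}. Once it is in hand, every simple object of $\CD$ is isomorphic in $\modbar(kP)$ to a fixed representative, and so $\ell(\CD)=1$.
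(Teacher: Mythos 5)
Your proof follows the same route as the paper's: given two non-isomorphic simple objects $S_1$, $S_2$ of $\CD$, fullness and splitness give $\Endbar_{kP}(S_i)\cong k$ and $\Hombar_{kP}(S_1,S_2)=0=\Hombar_{kP}(S_2,S_1)$, and both you and the paper then defer the contradiction to the proof of \cite[Theorem 3.4]{Listable}; your proposal additionally unpacks the first part of that argument, namely Carlson's endotriviality theorem and the translation to Tate cohomology.

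What your write-up does not complete is exactly the step you flag, and your diagnosis of what is needed there is off. No structural classification of endotrivial modules is required, and in fact the second orthogonality $\Hombar_{kP}(S_2,S_1)=0$ is not needed either. The decisive fact is elementary: for a nontrivial finite $p$-group $P$ with $\chr(k)=p$ and \emph{any} nonzero finitely generated $kP$-module $W$ with no nonzero projective direct summand, one has $\Hombar_{kP}(k,W)\neq 0$. Indeed $\Hombar_{kP}(k,W)\cong W^P/NW$ where $N=\sum_{g\in P}g$; if $Nw\neq 0$ for some $w\in W$, then the $kP$-homomorphism $kP\to W$, $x\mapsto xw$, is nonzero on $\soc(kP)=Nk$, hence injective because $kP$ has simple socle, hence split because $kP$ is self-injective, giving $W$ a nonzero projective summand, a contradiction. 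Thus $NW=0$, so $\Hombar_{kP}(k,W)\cong W^P$, which is nonzero because every nonzero $kP$-module has nonzero $P$-fixed points. You only use endotriviality of $W=S_1^*\tenk S_2$ to know that $W$ is nonzero modulo projectives; once that is granted, $\Hombar_{kP}(S_1,S_2)\cong\Hombar_{kP}(k,W)\neq 0$ already contradicts the assumed orthogonality, with no appeal to the group of endotrivial modules. So your plan is sound and matches the paper's, but the step you leave open can be closed by this short direct argument rather than by the classification you mention.
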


\begin{proof}
By the assumptions on $\CD$ we have $\ell(\CD)\geq$ $1$ (we include
here by convention the case where $\CD$ has infinitely many isomorphism
classes of simple objects). Arguing  by contradiction, suppose that 
$\ell(\CD)\geq$ $2$. Thus $\CD$  has two nonisomorphic simple objects 
$S$, $T$. Since $\CD$ is a full subcategory of $\modbar(kP)$, the 
objects $S$, $T$ remain nonisomorphic in $\modbar(kP)$. Since $\CD$ is 
split, we have $\Endbar_{kP}(S)\cong$ $k\cong$ $\Endbar_{kP}(T)$, and 
we have $\Hombar_{kP}(S,T)=\{0\}=$ $\Hombar_{kP}(T,S)$. It is shown in 
the proof of \cite[Theorem 3.4]{Listable} that this is not possible.
\end{proof} 

\begin{proof}[{Proof of Theorem \ref{Pgroupdist}}]
Denote by $\CD$ the distinguished abelian subcategory of $\mod(kP)$
obtained from taking the closure under isomorphisms in $\modbar(kP)$
of the image of the embedding $\Phi : \mod(D)\to$ $\modbar(kP)$.
By Lemma \ref{kPdistone}, the category $\CD$ has a unique isomorphism 
class of simple objects, whence (i). 
Let $V$ be an indecomposable $kP$-module such 
that $V$ is simple as an object in $\CD$. Then in particular 
$\Endbar_{kP}(V)\cong$ $k$. A result of J. F. Carlson 
\cite[Theorem 1]{Car98} implies that $V$ is endotrivial, which shows 
(ii). The exact functor $V\tenk -$ on $\mod(kP)$
induces an equivalence on $\modbar(kP)$, with inverse induced by the
functor $V^*\tenk -$. Thus after replacing $\CD$ by the image
of $\CD$ under the functor $V^*\tenk-$, we may (and do) assume that
the trivial $kP$-module $k$ belongs to $\CD$, and is the - up to
isomorphism unique - simple object of $\CD$. In other words, with the
notation and hypotheses of statement (iii), the $kP$-$D$-bimodule
$Y$ is projective as a right $D$-module and the functor $Y\ten_D-$
sends a simple $D$-module to the trivial $kP$-module. Thus the
hypotheses of Theorem \ref{modAmodDsimple} are satisfied, implying
statement (iii).
\end{proof}

\begin{Example} \label{nonproperExample}
Let $p=2$, let $P$ be a finite $2$-group of order at least $4$, let $Z$ be a subgroup of
order $2$ of $Z(P)$, and set $Q=P/Z$. By Theorem \ref{kGexample}, the abelian category 
$\mod(kQ)$ can be identified with a distinguished abelian subcategory of $\modbar(kP)$. 
This subcategory is not proper in the sense of \cite[Def. 1.2]{Jorg20}. 
Tensoring the obvious short exact sequence of $kZ$-modules
$$\xymatrix{0 \ar[r] & k \ar[r] & kZ \ar[r] & k \ar[r] & 0}$$
by $kP\ten_{kZ}-$ yields a short exact sequence of $kP$-modules
$$\xymatrix{0 \ar[r] & kQ \ar[r]  & kP \ar[r]  & kQ \ar[r]  & 0}\ .$$
In other words, regarding $kQ$ as a $kP$-module via the canonical surjection $P\to Q$
implies that $\Sigma(kQ)\cong kQ$. Thus we have a distinguished exact triangle in
$\modbar(kP)$ of the form
$$\xymatrix{kQ \ar[r] & 0 \ar[r] &  kQ \ar@{=}[r] & kQ }$$
The first three terms of this triangle belong to $\mod(kQ)$ but do not form a short 
exact sequence in $\mod(kQ)$.
\end{Example}  

%%%%%%%%%%%%%%%%%%%%%%%%%%%%%%%%%%%%%%%%%%%%%%%%%%%%%%%%%%%%%%%%%%%%%
\section{Basic properties of distinguished abelian 
subcategories} \label{basicpropertySection}

We show that a short exact sequence in a distinguished abelian
subcategory $\CD$ of a triangulated category $\CC$ determines a 
unique exact triangle in $\CC$; that is, we show that the morphism 
$h$ in Definition \ref{distDef} is unique. If not stated otherwise, the
shift functor of a triangulated category is denoted by $\Sigma$.

\begin{Proposition} \label{hunique}
Let $\CC$ be a triangulated category and let $\CD$ be a
distinguished abelian subcategory of $\CC$. Let 
$$\xymatrix{0 \ar[r] & X\ar[r]^{f} & Y \ar[r]^{g} &Z\ar[r] & 0}$$
be a short exact sequence in $\CD$. There is a unique morphism
$h : Z\to \Sigma(X)$ in $\CC$ such that 
$$\xymatrix{X\ar[r]^{f} & Y \ar[r]^{g} &Z\ar[r]^{h} & \Sigma(X)}$$
is an exact triangle in $\CC$. 
\end{Proposition}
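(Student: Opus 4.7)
The plan is to deduce uniqueness of $h$ from axiom TR3 (the morphism-of-triangles axiom) combined with the full-subcategory assumption and the abelian structure on $\CD$. Suppose $h, h' \colon Z \to \Sigma(X)$ both complete $f$ and $g$ to an exact triangle in $\CC$. Applying TR3 to these two triangles with the identity morphisms on $X$ and on $Y$ (which commute trivially with $f$), one obtains a morphism $c \colon Z \to Z$ in $\CC$ satisfying $c \circ g = g$ and $h' \circ c = h$.

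The next step is to show that $c = \mathrm{Id}_Z$. Since $\CD$ is a full subcategory of $\CC$, we have $c \in \Hom_\CD(Z, Z)$. The fact that $0 \to X \to Y \to Z \to 0$ is a short exact sequence in the abelian category $\CD$ ensures that $g$ is an epimorphism in $\CD$. From $c \circ g = \mathrm{Id}_Z \circ g$ and the epi property of $g$, one deduces $c = \mathrm{Id}_Z$, and therefore $h = h' \circ c = h'$.

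The main obstacle, in my view, is conceptual rather than computational: TR3 alone does not produce a unique third morphism in a morphism of triangles, so the uniqueness of $h$ is invisible from within $\CC$. It only emerges because the fullness hypothesis transports $c$ from $\CC$ into $\CD$, where the abelian fact that $g$ is an epimorphism rigidifies $c$ to the identity. A parallel argument using the monomorphism $f$ together with the long exact $\Hom$-sequence obtained by applying $\Hom_\CC(-,\Sigma(X))$ to the triangle would also work, but the TR3-based route seems cleanest because it uses only the defining data of $\CD$ rather than any additional vanishing of $\Hom$-groups in $\CC$.
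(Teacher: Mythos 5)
Your argument is correct and is essentially identical to the paper's own proof: both apply TR3 to the pair $(\mathrm{Id}_X,\mathrm{Id}_Y)$ to obtain a morphism $c\colon Z\to Z$ with $c\circ g=g$ and $h'\circ c=h$, and then use fullness of $\CD$ together with the fact that $g$ is an epimorphism in the abelian category $\CD$ to conclude $c=\mathrm{Id}_Z$, hence $h=h'$. No substantive difference.
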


\begin{proof}
The existence of $h$ is clear by definition; we need to show the 
uniqueness. Let $h$, $h' : Z\to$ $\Sigma(X)$ be morphisms in $\CC$ 
such that the triangles 
$$\xymatrix{X\ar[r]^{f} & Y \ar[r]^{g} &Z\ar[r]^{h} & \Sigma(X)}$$
$$\xymatrix{X\ar[r]^{f} & Y \ar[r]^{g} &Z\ar[r]^{h'} & \Sigma(X)}$$
are exact. The pair of identity morphisms $(\Id_X, \Id_Y)$ can be
completed to a morphism of triangles $(\Id_X, \Id_Y, a)$. That is, 
there is a morphism $a : Z\to$ $Z$ satisfying $a\circ g=$ $g$ and
$h'\circ a=$ $h$. Since $Z$ belongs to $\CD$ and since $\CD$ is a 
full subcategory of $\CC$, it follows that $a$ is a morphism in the 
abelian category $\CD$. Since $g$ is an epimorphism in $\CD$, this 
forces $a=$ $\Id_Z$, whence $h'=h$. 
\end{proof}

\begin{Remark} \label{closureRemark}
The definition of a distinguished abelian subcategory $\CD$ of a 
triangulated category $\CC$ does not require $\CD$ to be closed under
isomorphisms in $\CC$. One easily checks that the closure of $\CD$
under isomorphisms in $\CC$ is again a distinguished abelian
subcategory of $\CC$ which is equivalent to $\CD$ as an abelian 
category. 
\end{Remark}

\begin{Proposition} \label{detectProp}
Let $\CC$ be a triangulated category and let $\CD$ be a
distinguished abelian subcategory of $\CC$. Let
$$\xymatrix{0 \ar[r] & X \ar[r]^{f} & Y \ar[r]^{g} & Z \ar[r] & 0}$$
be  a short exact sequence in $\CD$ and let $W$ be an object in $\CC$.
If $W$ belongs to $\CD$, then  the maps
$$\xymatrix{\Hom_\CC(\Sigma(W), Y) \ar[r] & \Hom_\CC(\Sigma(W), Z) }$$
$$\xymatrix{\Hom_\CC(\Sigma(Y), W) \ar[r] & \Hom_\CC(\Sigma(X), W) }$$
induced by composition with $g$ and precomposition with $\Sigma(f)$
are surjective.
\end{Proposition}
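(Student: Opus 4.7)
The plan is to exploit the exact triangle $X \xrightarrow{f} Y \xrightarrow{g} Z \xrightarrow{h} \Sigma(X)$ provided by Definition \ref{distDef} (with $h$ uniquely determined by Proposition \ref{hunique}) and apply the cohomological functors $\Hom_\CC(\Sigma(W), -)$ and $\Hom_\CC(-, W)$ to it. In each case, surjectivity of the stated map is equivalent, via the resulting long exact sequence, to the vanishing of the subsequent connecting morphism.

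For the first map, I would need to show that for every $\alpha \in \Hom_\CC(\Sigma(W), Z)$ the composite $h \circ \alpha \colon \Sigma(W) \to \Sigma(X)$ is zero. Since $\Sigma$ is an autoequivalence of $\CC$, there is a unique $\beta \colon W \to X$ in $\CC$ with $\Sigma(\beta) = h \circ \alpha$. From the rotated triangle one has $\Sigma(f) \circ h = 0$, hence $\Sigma(f \circ \beta) = \Sigma(f) \circ h \circ \alpha = 0$ and therefore $f \circ \beta = 0$ in $\CC$. At this point the hypothesis $W \in \CD$ enters: since $W$, $X$, $Y$ all lie in the full subcategory $\CD$, the morphism $\beta$ is a morphism of $\CD$, and $f$ is a monomorphism in the abelian category $\CD$, so $\beta = 0$, whence $h \circ \alpha = 0$.

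For the second map, I would show that for every $\gamma \in \Hom_\CC(\Sigma(X), W)$ the composite $\gamma \circ h \colon Z \to W$ vanishes. Precomposing with $g$ gives $(\gamma \circ h) \circ g = \gamma \circ (h \circ g) = 0$, since consecutive morphisms in an exact triangle compose to zero. Since $Y$, $Z$, $W$ all lie in $\CD$, the morphism $\gamma \circ h$ is a morphism of $\CD$, and $g$ is an epimorphism in the abelian category $\CD$; therefore $\gamma \circ h = 0$.

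There is no serious obstacle. The whole argument is a pair of parallel diagram chases: the essential point in both halves is that fullness of $\CD$ inside $\CC$ allows one to transfer the monic/epi property of $f$, respectively $g$, from $\CD$ to a cancellation statement inside $\CC$ — the first half uses that $f$ is monic, the second that $g$ is epi, and these are the only places where the hypothesis $W \in \CD$ (or the fact that $X$, $Y$, $Z$ all lie in $\CD$) is used.
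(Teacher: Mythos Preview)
Your proof is correct and takes essentially the same approach as the paper: both arguments apply the cohomological functors $\Hom_\CC(W,-)$ and $\Hom_\CC(-,W)$ to the exact triangle and use fullness of $\CD$ together with $f$ monic (resp.\ $g$ epi) in $\CD$ to force a map in the long exact sequence to vanish. The paper phrases this as injectivity of $\Hom_\CC(W,X)\to\Hom_\CC(W,Y)$ and then shifts, while you shift first and argue elementwise that the connecting map vanishes; these are the same computation.
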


\begin{proof}
Since $\CD$ is a distinguished abelian subcategory in $\CC$,  
there is a morphism $h : Z\to$ $\Sigma(X)$ such that the triangle
$$\xymatrix{X\ar[r]^{f} & Y \ar[r]^{g} &Z\ar[r]^{h} & \Sigma(X)}$$
in $\CC$ is exact. Applying the functor $\Hom_\CC(W,-)$ yields
a long exact sequence of  the form 
{\small{
$$\xymatrix{\cdots\ar[r] & \Hom_\CC(W,\Sigma^{-1}(Y)) \ar[r] 
&\Hom_\CC(W,\Sigma^{-1}(Z)) \ar[r] & \Hom_\CC(W,X) \ar[r] 
&\Hom_\CC(W,Y)\ar[r] & \cdots}$$
}}
The right map is induced by composing with the monomorphism $f$ in
$\CD$. Thus if $W$ belongs to $\CD$, then the right map is injective.
But then the map in the middle is zero, so the left map is surjective.
Since $\Sigma$ is an equivalence, it follows that the map
$\xymatrix{\Hom_\CC(\Sigma(W), Y) \ar[r] & \Hom_\CC(\Sigma(W), Z) }$
is surjective. Similarly, we have a long exact sequence
{\small{
$$\xymatrix{\cdots\ar[r] & \Hom_\CC(\Sigma(Y),W) \ar[r] 
&\Hom_\CC(\Sigma(X),W) \ar[r] & \Hom_\CC(Z,W) \ar[r] 
&\Hom_\CC(Y,W)\ar[r] & \cdots}$$
}}
The right map is induced by precomposing with the epimorphism $g$
in $\CD$. Thus if $W$ belongs to $\CD$, then the right map is 
injective, hence the map in the middle is zero, and therefore the
left map is surjective. This concludes the proof.
\end{proof}

Unlike hearts of $t$-structures, distinguished abelian subcategories 
need not be disjoint from their shifts - they may contain periodic 
objects. The following consequence of Proposition \ref{detectProp}
shows that if $\CD$ is a 
distinguished abelian subcategory in a triangulated category $\CC$, 
then $\CD\cap \Sigma(\CD)$ is a subcategory of the additive category 
$\proj(\CD)$ generated by the projective objects in $\CD$.

\begin{Corollary} \label{WSigmaW}
Let $\CC$ be a triangulated category and let $\CD$ be a 
distinguished abelian subcategory of $\CC$. Let $W$ be an object in 
$\CD$ such that $\Sigma(W)$ is an object in $\CD$. Then $W$ is 
injective in $\CD$ and $\Sigma(W)$ is projective in $\CD$.
\end{Corollary}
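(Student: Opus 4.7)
The plan is to deduce both statements directly from Proposition \ref{detectProp}, using that $\CD$ is a full subcategory of $\CC$ (so $\Hom_\CD(U,V) = \Hom_\CC(U,V)$ for $U,V \in \CD$) together with the fact that the shift $\Sigma$ is an autoequivalence of $\CC$.

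To show that $\Sigma(W)$ is projective in $\CD$, I would take an arbitrary short exact sequence $0\to X\to Y\xrightarrow{g} Z\to 0$ in $\CD$ and apply the first surjectivity statement of Proposition \ref{detectProp} to the object $W\in \CD$. This gives that the map $\Hom_\CC(\Sigma(W), Y)\to \Hom_\CC(\Sigma(W), Z)$ induced by composition with $g$ is surjective. Since $\Sigma(W)$, $Y$ and $Z$ all lie in the full subcategory $\CD$, this is exactly the statement that $\Hom_\CD(\Sigma(W), g)$ is surjective, which is the defining property for $\Sigma(W)$ to be projective in $\CD$.

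To show that $W$ is injective in $\CD$, the idea is the same but applied to $\Sigma(W)$ instead of $W$, together with the shift trick. Starting again from a short exact sequence $0\to X\xrightarrow{f} Y\to Z\to 0$ in $\CD$, the second surjectivity in Proposition \ref{detectProp}, applied with $\Sigma(W)$ (which by hypothesis also lies in $\CD$), yields that
\[
\Hom_\CC(\Sigma(Y), \Sigma(W))\longrightarrow \Hom_\CC(\Sigma(X), \Sigma(W))
\]
induced by precomposition with $\Sigma(f)$ is surjective. Since $\Sigma$ is an equivalence on $\CC$, this map is canonically identified with the map $\Hom_\CC(Y, W)\to \Hom_\CC(X, W)$ induced by precomposition with $f$. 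As $X$, $Y$, $W$ all lie in $\CD$, this gives surjectivity of $\Hom_\CD(f, W)$, establishing that $W$ is injective in $\CD$.

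There is no real obstacle here; both halves are formal consequences of Proposition \ref{detectProp}. The only small point to be careful about is the application of $\Sigma$ as an equivalence to pass from the statement about $\Sigma$-shifted Hom-spaces to the unshifted Hom-space needed for injectivity of $W$ itself, and to remember that fullness of $\CD\subseteq \CC$ is what converts the Hom-statements in $\CC$ into statements about morphisms in $\CD$.
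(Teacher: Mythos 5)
Your proof is correct and follows essentially the same route as the paper: it invokes Proposition \ref{detectProp} with $W$ to get projectivity of $\Sigma(W)$, then invokes it again with $\Sigma(W)$ in place of $W$ and uses that $\Sigma$ is an equivalence to undo the shift, giving injectivity of $W$. The added remarks about fullness of $\CD$ and the identification of Hom-spaces under $\Sigma$ just spell out steps the paper leaves implicit.
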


\begin{proof}
By Proposition \ref{detectProp}, if $g : Y\to Z$ is an epimorphism in $\CD$,
then every morphism $\Sigma(W)\to$ $Z$
lifts through $g$. Since $\Sigma(W)$ belongs to $\CD$, it follows that 
$\Sigma(W)$ is projective in $\CD$. Similarly, by Proposition \ref{detectProp} 
(applied with $\Sigma(W)$ instead of $W$), if $f : X\to Y$ is a 
monomorphism in $\CD$, then every morphism $X\to W$ factors through 
$f$, and hence $W$ is injective in $\CD$.
\end{proof}

\begin{Proposition} \label{monoepiProp}
Let $\CC$ be a triangulated category and let $\CD$ be a
distinguished abelian subcategory of $\CC$. Let 
$$\xymatrix{0 \ar[r] & X \ar[r]^{f} & Y \ar[r]^{g} & Z \ar[r] & 0}$$
be  a short exact sequence in $\CD$ and let $W$ be an object in $\CC$.
\begin{enumerate}
\item[{\rm (i)}]
If $W$ is a projective object in $\CD$, then the map
$$\Hom_\CC(W,\Sigma(X))\to \Hom_\CC(W,\Sigma(Y))$$
induced by composition with $\Sigma(f)$ is injective.
\item[{\rm (ii)}]
If $W$ is an injective object in $\CD$, then the map
$$\Hom_\CC(Z,\Sigma(W)) \to \Hom_\CC(Y,\Sigma(W))$$
induced by precomposition with $g$ is injective.
\end{enumerate}
\end{Proposition}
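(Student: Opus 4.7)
The plan is to mimic the proof of Proposition~\ref{detectProp}: extend the given short exact sequence to an exact triangle in $\CC$ via the distinguished property of $\CD$, apply $\Hom_\CC(W,-)$ or $\Hom_\CC(-,\Sigma(W))$ to obtain a long exact sequence, and then use the fullness of $\CD\subseteq\CC$ to transport (in)jectivity questions in $\CC$ back into the abelian category $\CD$ where projectivity or injectivity of $W$ can be invoked.

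More precisely, since $\CD$ is a distinguished abelian subcategory, there exists $h:Z\to\Sigma(X)$ such that
$$\xymatrix{X\ar[r]^{f} & Y \ar[r]^{g} & Z\ar[r]^{h} & \Sigma(X) \ar[r]^{\Sigma(f)} & \Sigma(Y)}$$
is exact in $\CC$. For (i), applying $\Hom_\CC(W,-)$ yields the exact sequence
$$\Hom_\CC(W,Y)\xrightarrow{g_*}\Hom_\CC(W,Z)\xrightarrow{h_*}\Hom_\CC(W,\Sigma(X))\xrightarrow{\Sigma(f)_*}\Hom_\CC(W,\Sigma(Y)),$$
so injectivity of $\Sigma(f)_*$ is equivalent to surjectivity of $g_*$. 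Let $\phi:W\to Z$ be a morphism in $\CC$. Because $\CD$ is a full subcategory of $\CC$ and both $W$ and $Z$ lie in $\CD$, the morphism $\phi$ is in fact a morphism in $\CD$. Since $g$ is an epimorphism in $\CD$ and $W$ is projective in $\CD$, there is $\psi:W\to Y$ in $\CD$ with $g\circ\psi=\phi$; viewed in $\CC$ this gives surjectivity of $g_*$, hence injectivity of $\Sigma(f)_*$.

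Statement (ii) is the formally dual argument. Applying $\Hom_\CC(-,\Sigma(W))$ to the same exact triangle gives the exact sequence
$$\Hom_\CC(\Sigma(Y),\Sigma(W))\xrightarrow{\Sigma(f)^*}\Hom_\CC(\Sigma(X),\Sigma(W))\xrightarrow{h^*}\Hom_\CC(Z,\Sigma(W))\xrightarrow{g^*}\Hom_\CC(Y,\Sigma(W)),$$
so the injectivity of $g^*$ is equivalent to the surjectivity of $\Sigma(f)^*$, and since $\Sigma$ is an equivalence, the latter is equivalent to saying that every morphism $X\to W$ in $\CC$ factors through $f:X\to Y$. Again, fullness of $\CD$ implies that any such morphism is already a morphism in $\CD$, and because $f$ is a monomorphism in $\CD$ with $W$ injective in $\CD$, it extends through $f$ in $\CD$, hence in $\CC$.

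There is no serious obstacle; the only point requiring care is the translation step in both parts, namely that fullness of $\CD$ in $\CC$ allows us to upgrade an arbitrary $\CC$-morphism with source and target in $\CD$ to a morphism of the abelian category $\CD$, at which point the hypotheses of projectivity (resp.\ injectivity) of $W$ become applicable. This is exactly the mechanism already used in Proposition~\ref{detectProp}, so the present proposition may equivalently be phrased as the ``projective/injective'' refinement of that result.
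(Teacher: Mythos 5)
Your proof is correct and follows the same route as the paper: extend the short exact sequence to an exact triangle, apply $\Hom_\CC(W,-)$ (resp.\ $\Hom_\CC(-,\Sigma(W))$) to get a long exact sequence, and use fullness of $\CD$ together with projectivity (resp.\ injectivity) of $W$ to show $g_*$ is surjective (resp.\ $\Sigma(f)^*$ is surjective), which forces $h_*$ (resp.\ $h^*$) to vanish. You merely spell out the fullness step more explicitly than the paper, which compresses (ii) into ``a dual argument''.
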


\begin{proof}
Since $\CD$ is a distinguished abelian subcategory in $\CC$,  
there is a morphism $h : Z\to$ $\Sigma(X)$ such that the triangle
$$\xymatrix{X\ar[r]^{f} & Y \ar[r]^{g} &Z\ar[r]^{h} & \Sigma(X)}$$
in $\CC$ is exact. Applying the functor $\Hom_\CC(W,-)$ yields a
long exact sequence of the form 
$$\xymatrix{\cdots\ar[r] & \Hom_\CC(W,Y)\ar[r]^{g_*} & \Hom_\CC(W,Z) \ar[r]^{h_*} 
&\Hom_\CC(W,\Sigma(X)) \ar[r]^{\Sigma(f)_*} 
& \Hom_\CC(W,\Sigma(Y)) \ar[r] &\cdots}$$
If $W$ is projective in $\CD$, then $g_*$ is surjective, hence  
$h_*$ is zero.  This implies that $\Sigma(f)_*$ is injective, proving 
(i). A dual argument, applying the functor $\Hom_\CC(-,W)$, and using 
the fact that $\Sigma$ is an equivalence, shows (ii).
\end{proof}

A category $\CD$ is called {\it split} if every morphism $f : X\to Y$
in $\CD$ is {\it split}; that is, if there exists a morphism 
$g : Y\to X$ such that $f=$ $f\circ g\circ f$.  If $\CD$ is an abelian
category, an easy verification shows that $\CD$ is split if and only if 
every monomorphism (resp. epimorphism) in $\CD$ is split. 
All epimorphisms and monomorphisms in a triangulated category are
split. 

\begin{Proposition} \label{no-adjoint}
Let $\CC$ be a triangulated category and $\CD$ a distinguished
abelian subcategory. If the inclusion functor $\CD\subseteq$ $\CC$ has
a left adjoint or a right adjoint as an additive functor, then $\CD$ is split.
\end{Proposition}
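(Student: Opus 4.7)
The plan is to show that if the inclusion $\iota : \CD \hookrightarrow \CC$ admits a left adjoint $L$, then every monomorphism in $\CD$ is split; the right-adjoint case is handled by the dual argument on epimorphisms. By the remark immediately preceding the proposition, either statement suffices to conclude that $\CD$ is split.

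Let $f : X \to Y$ be a monomorphism in $\CD$. The first and main step is to promote $f$ to a monomorphism in $\CC$. Given $W \in \CC$ and a morphism $a : W \to X$ in $\CC$ with $f \circ a = 0$, the adjunction isomorphisms
\[
\Hom_\CC(W, \iota(X)) \cong \Hom_\CD(L(W), X), \qquad
\Hom_\CC(W, \iota(Y)) \cong \Hom_\CD(L(W), Y),
\]
natural in the second argument, transport $a$ to some $\tilde a : L(W) \to X$ in $\CD$ with $f \circ \tilde a = 0$. Since $f$ is a monomorphism in $\CD$ and $L(W) \in \CD$, we obtain $\tilde a = 0$, hence $a = 0$. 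Thus $f$ is a monomorphism in $\CC$.

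The second step invokes the fact, recalled just before the proposition, that every monomorphism in a triangulated category is split. Concretely, completing $f$ to an exact triangle $X \xrightarrow{f} Y \xrightarrow{g} Z \xrightarrow{h} \Sigma(X)$ and rotating to the exact triangle $\Sigma^{-1}(Z) \xrightarrow{-\Sigma^{-1}(h)} X \xrightarrow{f} Y$ shows that $f \circ \Sigma^{-1}(h) = 0$, whence $h = 0$ since $f$ is a monomorphism in $\CC$; the original triangle then splits, yielding a retraction $r : Y \to X$ in $\CC$. Fullness of $\CD$ in $\CC$ gives $r \in \Hom_\CD(Y, X)$, so $f$ is split in $\CD$.

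The right-adjoint case is entirely dual: given a right adjoint $R$, any epimorphism $g$ in $\CD$ is an epimorphism in $\CC$ via the bijections $\Hom_\CC(\iota(-), W) \cong \Hom_\CD(-, R(W))$, hence a split epimorphism in $\CC$, hence in $\CD$. The only bookkeeping point is the naturality of the adjunction, which is needed to see that composition with $f$ on the $\CC$-side corresponds under the bijection to composition with $f$ on the $\CD$-side; this is automatic because $f$ lies in the full subcategory $\CD$, so no serious technical obstacle is expected.
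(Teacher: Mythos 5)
Your proof is correct and follows essentially the same route as the paper: use the adjunction to show a monomorphism in $\CD$ remains a monomorphism in $\CC$, invoke the fact that monomorphisms in a triangulated category split, and use fullness of $\CD$ to conclude the splitting lies in $\CD$. The only cosmetic differences are that you phrase the adjunction argument in terms of a single test morphism $a : W \to X$ rather than the hom-set map, and you spell out the rotation argument for why a mono in $\CC$ splits, which the paper simply cites as a known fact in the sentence just before the proposition.
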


\begin{proof}
Suppose that the inclusion functor $\CD\subseteq$ $\CC$ has a left
adjoint $\Phi$. That is, for any object $U$ in $\CD$ and any object
$X$ in $\CC$ we have a natural isomorphism $\Hom_\CD(\Phi(X),U)\cong$
$\Hom_\CC(X,U)$. Thus any monomorphism $U\to U'$ in $\CD$
induces an injective map $\Hom_\CC(X,U)\to$ $\Hom_\CC(X,U')$.
This shows that the morphism $U\to U'$ is a monomorphism in $\CC$,
hence split in $\CC$. Since $\CD$ is a full subcategory of $\CC$ it follows
that the monomorphism $U\to U'$ is split in $\CD$, and hence
$\CD$ is split. A similar argument
shows that if the inclusion functor $\CD\subseteq$ $\CC$ has a right adjoint,
then every epimorphism in $\CD$ is split, whence the result.
\end{proof}

If $D$ is a finite-dimensional $k$-algebra, then $D$
is semisimple if and only if $\mod(D)$ is split. Thus Proposition
\ref{no-adjoint} has the following immediate consequence.

\begin{Corollary} \label{no-adjoint-Cor}
Let $A$ be a finite-dimensional selfinjective $k$-algebra, $D$ a 
finite-dimensional $k$-algebra, and $\Phi : \mod(D) \to$
$\modbar(A)$ a full embedding of $\mod(D)$ as a distinguished
abelian subcategory in $\modbar(A)$. If $\Phi$ has a left adjoint
or a right adjoint, then $D$ is semisimple.
\end{Corollary}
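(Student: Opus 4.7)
The plan is to deduce Corollary \ref{no-adjoint-Cor} essentially as a formal consequence of Proposition \ref{no-adjoint}, after transporting the adjoint hypothesis along the equivalence induced by the fully faithful functor $\Phi$. First, I would let $\CD$ be the essential image of $\Phi$ in $\modbar(A)$, enlarged via Remark \ref{closureRemark} so as to be closed under isomorphisms. Then $\CD$ is a distinguished abelian subcategory of $\modbar(A)$, and $\Phi$ factors as the composition of an equivalence of abelian categories $\bar{\Phi} : \mod(D) \to \CD$ followed by the inclusion $\iota : \CD \hookrightarrow \modbar(A)$.

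Next, I would observe that if $\Phi$ admits a left adjoint $\Psi : \modbar(A) \to \mod(D)$, then $\bar{\Phi}\circ\Psi$ is a left adjoint to $\iota$: for $X$ in $\modbar(A)$ and $U$ in $\CD$, the natural isomorphism $\Hom_{\CD}(\bar{\Phi}\Psi(X), U) \cong \Hom_{\modbar(A)}(X, U)$ follows by applying a quasi-inverse of $\bar{\Phi}$ to the first argument and using the adjunction between $\Psi$ and $\Phi$ together with the fact that $\iota$ is fully faithful. The analogous argument transports a right adjoint of $\Phi$ to a right adjoint of $\iota$. Applying Proposition \ref{no-adjoint} then yields that $\CD$ is split as an abelian category, and transporting back along the equivalence $\bar{\Phi}$, so is $\mod(D)$.

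Finally, I would invoke the standard fact that a finite-dimensional $k$-algebra $D$ is semisimple if and only if every short exact sequence in $\mod(D)$ splits, which is equivalent to $\mod(D)$ being split in the sense of the paper: every monomorphism in $\mod(D)$ being split is precisely the Wedderburn-Artin characterization of semisimplicity (equivalently, every finitely generated $D$-module is projective). There is no real obstacle here: Proposition \ref{no-adjoint} does the substantive work, and the remaining steps are formal transport of structure along the equivalence $\bar{\Phi}$ together with a classical module-theoretic characterization of semisimplicity.
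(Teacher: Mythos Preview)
Your proposal is correct and follows essentially the same approach as the paper, which derives the corollary as an immediate consequence of Proposition~\ref{no-adjoint} together with the observation that $\mod(D)$ is split if and only if $D$ is semisimple. You are simply more explicit than the paper about the (routine) transport of the adjoint along the equivalence $\bar\Phi$ between $\mod(D)$ and its essential image $\CD$, which the paper leaves implicit.
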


This Corollary implies in particular that even if $\Phi$ is induced by 
tensoring with a suitable $A$-$D$-bimodule, the Tensor-Hom 
adjunction does not in general yield a right adjoint to $\Phi$; see
Lemma \ref{stable-adj-Lemma} and Remark \ref{stable-adj-Remark}
for some more comments.

\begin{Remark} \label{completionRemark}
By a result of Balmer and Schlichting \cite[Theorem 1.5]{BaSch}, the 
idempotent completion $\hat\CC$ of a triangulated category $\CC$ is 
triangulated in such a way that the canonical embedding $\CC\to$ 
$\hat\CC$ is an exact functor. Since this embedding is full, it follows 
that a distinguished abelian subcategory $\CD$ of $\CC$ remains a 
distinguished abelian subcategory of $\hat\CC$. Since any abelian
category is idempotent split, it follows that the indecomposable 
objects in $\CD$ remain indecomposable in $\hat\CC$.
\end{Remark}

\begin{Example} \label{progenExample}
Let $k$ be a field of characteristic $2$ and let $P$ be a finite
$2$-group of order at least $4$. Let $Z$ be a central subgroup of order 
$2$ of $P$. Set $Y=$ $kP/Z$ as left $kP$-module. Then $\End_{kP}^\pr(Y)=$
$\{0\}$ and $\End_{kP}(Y)\cong$ $(kP/Z)^\op$. Clearly $Y$ is a 
progenerator of the distinguished abelian subcategory $\mod(kP/Z)$ of 
$\modbar(kP)$, obtained from the restriction functor $\Phi$ given by the 
canonical surjection $kP\to$ $kP/Z$. The functor $\Phi$ is trivially 
isomorphic to $Y\ten_{kP/Z}-$.  As mentioned in Example \ref{nonproperExample},
we have $\Sigma(Y)\cong$ $Y$, where 
$\Sigma$ is the shift functor in $\modbar(kP)$. In other words, as a 
$kP$-module, $Y$ has period $1$. Therefore, $Y$ is also a progenerator 
of the distinguished abelian subcategory $\Sigma(\mod(kP/Z))$. The 
subcategories $\mod(kP/Z)$ and $\Sigma(\mod(kP/Z))$ are different; in 
fact, their intersection is $\add(Y)$ because of Proposition 
\ref{WSigmaW}. In particular, the embedding $\Sigma\circ\Phi : 
\mod(kP/Z)\to\modbar(kP)$ is not induced by the functor $Y\ten_{kP/Z}-$.
It is, though, still induced by tensoring with a bimodule, namely
the $kP$-$kP/Z$-bimodule $\Sigma_{P\times P/Z}(Y)$. This is because we
have composed the embedding $\Phi$ with the self-equivalence $\Sigma$ on
$\modbar(kP)$, which is a stable equivalence of Morita type, hence
induced by tensoring with a suitable bimodule. One should expect that
composing $\Phi$ with a stable equivalence on $\modbar(kP)$ which is not 
of Morita type would yield embeddings $\mod(kP/Z)\to$ $\modbar(kP)$ as
distinguished abelian subcategories which are not induced by tensoring 
with any bimodule.
\end{Example}

%%%%%%%%%%%%%%%%%%%%%%%%%%%%%%%%%%%%%%%%%%%%%%%%%%%%%%%
\section{Exact sequences in distinguished abelian subcategories}
\label{exactSection}

Any exact triangle $X\to Y\to Z\to \Sigma(X)$ in a triangulated category 
$\CC$ such that $X$, $Y$, $Z$ belong to the heart $\CA$ of a 
$t$-structure is in fact induced by a short exact sequence 
$0\to X\to Y\to Z\to 0$ in $\CA$. In an arbitrary distinguished abelian 
subcategory, this need not be the case. The case where a nonzero object 
$W$ and its shift $\Sigma(W)$ belong to a distinguished abelian 
subcategory $\CD$ of $\CC$ yields an exact triangle
$$\xymatrix{W \ar[r] & 0 \ar[r] & \Sigma(W) \ar@{=}[r] & \Sigma(W)}$$
which is not induced by an exact sequence in $\CD$. As noted in 
Corollary \ref{WSigmaW}, in that situation $W$ is injective in $\CD$
and $\Sigma(W)$ is projective in $\CD$. This situation arises in Example 
\ref{nonproperExample}.

The following result shows that these are essentially the only exact
triangles with three terms in $\CD$ which can arise besides
those induced by short exact sequences in $\CD$. As before, we 
denote the shift functor in a triangulated category by $\Sigma$.

\begin{Proposition} \label{DexactC}
Let $\CD$ be a distinguished abelian subcategory in a triangulated
category $\CC$, and let
$$\xymatrix{X \ar[r]^{f} & Y \ar[r]^{g} & Z \ar[r]^{h} & \Sigma(X)}$$
be an exact triangle in $\CC$ such that $X$, $Y$, $Z$ belong to $\CD$. 
Then this triangle is isomorphic to a direct sum of two exact triangles 
of the form
$$\xymatrix{X' \ar[r]^{f'} & Y \ar[r]^{g'} & Z' \ar[r]^{h'} 
& \Sigma(X')}$$
$$\xymatrix{W \ar[r] & 0 \ar[r] & \Sigma(W) \ar@{=}[r] & \Sigma(W)}$$
where $X'$, $Z'$, $W$, $\Sigma(W)$ are in $\CD$, and where the sequence
$$\xymatrix{0\ar[r] & X' \ar[r]^{f'} & Y \ar[r]^{g'} & Z' \ar[r] &0}$$
is exact in $\CD$. Moreover, $W$ is injective in $\CD$ and $\Sigma(W)$ 
is projective in $\CD$.
\end{Proposition}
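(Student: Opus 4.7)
The plan is to factor $f$ in the abelian category $\CD$, apply the octahedral axiom to isolate the obstruction to the triangle arising from a short exact sequence in $\CD$, and then show that this obstruction assembles into a direct summand of the form $W \to 0 \to \Sigma(W) \xrightarrow{=} \Sigma(W)$. Set $W = \ker(f)$ and $X' = \Im(f)$, both computed in the abelian category $\CD$, so that $f$ factors as $X \xrightarrow{\pi} X' \xrightarrow{f'} Y$ with $\pi$ an epimorphism and $f'$ a monomorphism in $\CD$. Let $Z' = Y/X'$ in $\CD$. The short exact sequence $0 \to X' \xrightarrow{f'} Y \xrightarrow{g'} Z' \to 0$ in $\CD$ produces, by the distinguished property, an exact triangle $X' \xrightarrow{f'} Y \xrightarrow{g'} Z' \to \Sigma X'$ in $\CC$; this will be the ``nontrivial'' summand.

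Next, apply the octahedral axiom to the factorization $f = f' \circ \pi$, using that the short exact sequence $0 \to W \to X \xrightarrow{\pi} X' \to 0$ extends to a triangle with third term $\Sigma W$. This yields an exact triangle $\Sigma W \to Z \xrightarrow{v} Z' \to \Sigma^2 W$ in $\CC$ with $v \circ g = g'$. Since $g'$ is epi in $\CD$, so is $v$. Comparing this with the exact triangle $\ker_\CD(v) \to Z \xrightarrow{v} Z' \to \Sigma \ker_\CD(v)$ attached to the short exact sequence $0 \to \ker_\CD(v) \to Z \xrightarrow{v} Z' \to 0$ in $\CD$ --- both triangles have $v$ as middle map --- the standard uniqueness-of-cone argument (TR3 together with the triangulated five lemma) gives an isomorphism $\Sigma W \cong \ker_\CD(v)$ in $\CC$. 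This is the key step: it forces $\Sigma W$ to be isomorphic to an object of $\CD$, so that after passing to the isomorphism closure of $\CD$ (see Remark \ref{closureRemark}) one has $\Sigma W \in \CD$. Corollary \ref{WSigmaW} now yields that $W$ is injective in $\CD$ and $\Sigma W$ is projective in $\CD$.

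Finally, injectivity of $W$ splits $0 \to W \to X \xrightarrow{\pi} X' \to 0$, producing an isomorphism $X \cong W \oplus X'$ in $\CD$; under any such splitting $f$ becomes $(0, f') : W \oplus X' \to Y$, since $\pi$ vanishes on $W$ and restricts to the identity on the chosen section of $X'$. Direct sums of exact triangles are exact, so the direct sum of the trivial triangle $W \to 0 \to \Sigma W \xrightarrow{=} \Sigma W$ with $X' \xrightarrow{f'} Y \xrightarrow{g'} Z' \to \Sigma X'$ is an exact triangle in $\CC$ whose first morphism agrees with $f$ under the splitting. Since an exact triangle is determined up to isomorphism by its first morphism, this direct sum triangle is isomorphic to the original $X \to Y \to Z \to \Sigma X$, yielding the desired decomposition.
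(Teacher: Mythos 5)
Your proof is correct, and it takes a genuinely different route from the paper's. The paper sets $X' := \ker_\CD(g)$ rather than $\Im_\CD(f)$, and proceeds without the octahedral axiom: since $g\circ f' = 0$ the kernel map $f'$ lifts through the given triangle to some $v : X' \to X$ with $f\circ v = f'$, while the universal property of the kernel gives $w : X \to X'$ with $f = f'\circ w$; then $w\circ v = \Id_{X'}$ because $f'$ is a monomorphism, so the s.e.s. triangle $X' \to Y \to Z' \to \Sigma X'$ is realized as a \emph{retract} of $X \to Y \to Z \to \Sigma X$ via $(v,\Id_Y,a)$ and $(w,\Id_Y,b)$ (after normalizing so $b\circ a = \Id_{Z'}$), and the complementary summand $W = \ker(w)$ gives the trivial triangle. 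Your proof instead factors $f = f'\circ\pi$ through $X' := \Im_\CD(f)$, invokes TR4 on the two s.e.s.-triangles and the given triangle to produce $\Sigma W \to Z \xrightarrow{v} Z' \to \Sigma^2 W$ with $v\circ g = g'$, uses that $v$ is an epimorphism in $\CD$ to realize $\Sigma W$ as $\ker_\CD(v)$, and only then deduces injectivity of $W$ (via Corollary~\ref{WSigmaW}) to split $X \cong W \oplus X'$ and assemble the direct-sum decomposition by uniqueness of cones. Both arguments are valid. The paper's is shorter and requires only TR1--TR3 plus the abelian structure of $\CD$; it establishes $\Im(f) = \ker(g)$ as a byproduct rather than as a starting point. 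Your route uses TR4 and arrives at $\Sigma W \in \CD$ more explicitly (as $\ker_\CD(v)$), but the overall machinery is heavier than strictly necessary. One small point worth making explicit in your write-up: that the isomorphism $X \cong W \oplus X'$ transports $f$ to $(0,f')$ uses the specific splitting $x \mapsto (x - s\pi(x), \pi(x))$ for a chosen section $s$ of $\pi$; the claim is correct but should be verified since the proposition asserts an isomorphism of triangles, not merely of objects.
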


\begin{proof}
Since $\CD$ is abelian, the morphism $g$ has a kernel $f' : X'\to Y$ 
in $\CD$. Then in particular $g\circ f'=0$, and hence 
there is a morphism $v : X'\to X$ such that $f\circ v=f'$. Since $f'$ 
is the kernel of $g$ and since $g\circ f=0$, there is a unique 
morphism $w : X \to X'$ such that $f = f'\circ w$. Thus 
$$f'\circ w \circ v = f\circ v = f'$$
and since $f'$ is a monomorphism, this forces $w\circ v=$ $\Id_{X'}$.
Denote by $g' : Y\to Z'$ a cokernel of $f'$ in $\CD$, so that we get a 
short exact sequence
$$\xymatrix{0\ar[r] & X' \ar[r]^{f'} & Y \ar[r]^{g'} & Z' \ar[r] &0}$$
in $\CD$. Since $\CD$ is distinguished, this can be completed to an 
exact triangle in $\CC$ with a morphism $h' : Z'\to$ $\Sigma(X')$. 
The morphisms $v$ and $w$ yield morphisms of triangles
$$\xymatrix{X' \ar[r]^{f'} \ar[d]_{v} & Y\ar[r]^{g'} \ar@{=}[d] 
& Z'\ar[r]^{h'} \ar[d]^{a} & \Sigma(X') \ar[d]^{\Sigma(v)} \\
X \ar[r]^{f} \ar[d]_{w} & Y\ar[r]^{g} \ar@{=}[d] 
& Z\ar[r]^{h} \ar[d]^{b} & \Sigma(X) \ar[d]^{\Sigma(w)} \\
X' \ar[r]_{f'} & Y\ar[r]_{g'} & Z'\ar[r]^{h'} & \Sigma(X')}$$
Since $w\circ v=\Id_{X'}$, it follows that $b\circ a$ is an
automorphism of $Z'$. Therefore $(\Id_{X'}, \Id_Y, b\circ a)$ is an
automorphism of the third triangle, and hence so is its inverse. 
After replacing $b$ by $(b\circ a)^{-1}\circ b$, we therefore may 
choose $b$ in such a way that $b\circ a=$ $\Id_{Z'}$. 
It follows that the first triangle is a direct 
summand of the second, and that it has a complement isomorphic to
$$\xymatrix{W \ar[r] & 0 \ar[r] & \Sigma(W) \ar@{=}[r] & \Sigma(W)}$$
where $W$ is the complement of $X'$ in $X$ determined by $\ker(w)$.
The last statement on $W$ (resp. $\Sigma(W)$) being injective (resp. 
projective) in $\CD$ follows from Corollary \ref{WSigmaW}.
\end{proof}

\begin{Corollary} \label{triangsubcatCor}
Let $\CC$ be a triangulated category, $\CD$ a distinguished abelian
subcategory of $\CC$, and $\CT$ a thick subcategory of
$\CC$. Suppose that $\CT\subseteq$ $\CD$. Then all objects in $\CT$ 
are projective and injective in $\CD$, and $\CT$ is split. 
\end{Corollary}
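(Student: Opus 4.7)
The plan is to dispatch the two claims in sequence, each leaning on an earlier result of this section.

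The projectivity/injectivity claim is essentially immediate from Corollary \ref{WSigmaW}. Since $\CT$ is thick, it is closed under $\Sigma$ and $\Sigma^{-1}$, so for any $W\in\CT$ both $\Sigma(W)$ and $\Sigma^{-1}(W)$ lie in $\CT\subseteq\CD$. Corollary \ref{WSigmaW} applied to $W$ then gives that $W$ is injective in $\CD$; applied instead to $\Sigma^{-1}(W)$ (whose shift is $W$) it gives that $W$ is projective in $\CD$.

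For the splitness of $\CT$, I would start with an arbitrary morphism $f:X\to Y$ in $\CT$ and complete it to an exact triangle $X\to Y\to Z\to\Sigma(X)$ in $\CC$. Thickness of $\CT$ forces $Z\in\CT$, so all three terms lie in $\CD$. Proposition \ref{DexactC} then decomposes this triangle as the direct sum of a triangle coming from a short exact sequence $0\to X'\xrightarrow{f'}Y\xrightarrow{g'}Z'\to 0$ in $\CD$ and a trivial triangle $W\to 0\to\Sigma(W)=\Sigma(W)$, with $X\cong X'\oplus W$ and the original $f$ identified under this decomposition with $(f',0):X'\oplus W\to Y$. Since $\CT$ is closed under direct summands, $X'\in\CT$, so by the first paragraph $X'$ is injective in $\CD$. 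Therefore the short exact sequence $0\to X'\to Y\to Z'\to 0$ splits in $\CD$, producing a retraction $r:Y\to X'$ of $f'$.

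To finish, define $g:Y\to X$ as the composite $Y\xrightarrow{r}X'\hookrightarrow X'\oplus W\cong X$. Writing $f=(f',0)$ and $g=\binom{r}{0}$ relative to the decomposition $X\cong X'\oplus W$, a direct calculation gives $f\circ g\circ f=f'\circ r\circ f'=f'=f$, so $f$ is split in $\CT$. The one point that takes a moment to verify is that under the isomorphism of triangles supplied by Proposition \ref{DexactC} the morphism $f$ really does become $(f',0)$; but this is essentially forced by the fact that the second summand triangle has $0$ as its middle term, so one only has to trace through the decomposition constructed in the proof of that proposition. Everything else is bookkeeping.
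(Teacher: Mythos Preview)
Your proof is correct and follows essentially the same route as the paper's: both use Corollary \ref{WSigmaW} for the projectivity/injectivity claim, then complete an arbitrary morphism in $\CT$ to a triangle, invoke Proposition \ref{DexactC} to decompose it, and observe that the short exact sequence piece splits because its first term is injective in $\CD$ while the remaining piece $W\to 0\to\Sigma(W)$ is trivially split. Your version is slightly more explicit in writing down the splitting morphism $g$ and in verifying that $f$ corresponds to $(f',0)$ under the decomposition (which indeed follows from the relation $f=f'\circ w$ established in the proof of Proposition \ref{DexactC}), but the underlying argument is the same.
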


\begin{proof}
Since $\CT$ is closed under powers of $\Sigma$, it follows from 
Corollary \ref{WSigmaW} that all objects in $\CT$ are projective and injective
in $\CD$. Let $f : X\to$ $Y$ be a morphism in $\CT$. 
Complete $f$ to an exact triangle
$$\xymatrix{X\ar[r]^{f} & Y \ar[r]^{g} &Z\ar[r]^{h} & \Sigma(X)}$$
in $\CT$ (or equivalently, in $\CC$).  Since $\CT$ is contained in $\CD$, it
follows that (possibly after replacing $Z$ by an isomorphic object) 
$Z$ belongs to $\CD$, and hence the morphism $g$ belongs
to $\CD$. This triangle is the direct sum of two triangles as in 
Proposition \ref{DexactC}. All terms in these two triangles are
in $\CT$, hence projective and injective in $\CD$. The first of the two
triangles is induced by a short exact sequence in $\CD$, and therefore 
split. The second of the two triangles is trivially split.
The result follows.
\end{proof}

\begin{Remark} \label{triang-sym-Remark}
With the notation of Corollary \ref{triangsubcatCor}, suppose that 
$\CD$ is equivalent to $\mod(D)$ for some finite-dimensional symmetric
$k$-algebra $D$ and that $\CT$ is a thick subcategory of $\CC$ which is
contained in $\CD$. Then $\CT$ is generated, as an additive category, by 
indecomposable projective (or equivalently, injective) objects in $\CD$. By 
the assumption on $\CD$, each projective indecomposable object $U$ in 
$\CD$ has an endomorphism with image the socle of $U$. This 
endomorphism is split, hence an isomorphism, and thus $U$ is simple. 
Therefore, in this situation, $\CT$  consists of projective semisimple 
objects in $\CD$ which are permuted by $\Sigma$.
\end{Remark}

\begin{Proposition} \label{abelianinclusion}
Let $\CC$ be a triangulated category and let $\CD$, $\CD'$ be
distinguished abelian subcategories of $\CC$ such that $\CD\subseteq$
$\CD'$. Then $\CD$ is an abelian subcategory of $\CD'$; that is, the
inclusion functor $\CD\subseteq$ $\CD'$ is exact.
\end{Proposition}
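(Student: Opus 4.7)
The plan is to apply Proposition \ref{DexactC} to $\CC$ with respect to the distinguished abelian subcategory $\CD'$. Given a short exact sequence $0\to X\to Y\to Z\to 0$ in $\CD$, the distinguished property of $\CD$ first produces an exact triangle $X\stackrel{f}{\to} Y\stackrel{g}{\to} Z\stackrel{h}{\to}\Sigma X$ in $\CC$. Since $X$, $Y$, $Z$ lie in $\CD\subseteq\CD'$, Proposition \ref{DexactC} then decomposes this triangle as the direct sum of an exact triangle induced by a short exact sequence $0\to X'\to Y\to Z'\to 0$ in $\CD'$ and a contractible triangle whose only nonzero terms are $W$ and $\Sigma W$, both in $\CD'$. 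In particular this exhibits $X$ as $X'\oplus W$ in $\CC$ via an idempotent $e\in\End_\CC(X)$ projecting onto $W$, and $f$ vanishes on the $W$-summand.

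The second step is to show that $W=0$. Since $\CD$ is a full subcategory of $\CC$ containing $X$, the idempotent $e$ belongs to $\End_\CD(X)$; because $\CD$ is abelian and hence idempotent complete, $e$ splits in $\CD$, yielding $W_0\in\CD$ together with morphisms $\iota_0\colon W_0\to X$ and $\pi_0\colon X\to W_0$ in $\CD$ satisfying $\pi_0\iota_0=\Id_{W_0}$ and $\iota_0\pi_0=e$. By the standard uniqueness of splittings of idempotents, $(W_0,\iota_0,\pi_0)$ is canonically isomorphic in $\CC$ to the splitting $(W,\iota,\pi)$ coming from the triangle decomposition, and in particular $f\circ\iota_0=0$ in $\CC$, hence also in $\CD$ by fullness. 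The monomorphism property of $f$ in $\CD$ then forces $\iota_0=0$; precomposing with $\pi_0$ gives $\Id_{W_0}=0$, so $W_0=0$ and therefore $W=0$.

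Once $W=0$, the original triangle coincides with the first summand, and the induced isomorphisms $X'\cong X$ and $Z'\cong Z$ in $\CD'$ then exhibit $0\to X\to Y\to Z\to 0$ as a short exact sequence in $\CD'$. Since the inclusion $\CD\hookrightarrow\CD'$ is already fully faithful and additive, preserving short exact sequences is exactly the exactness to be shown. I expect the idempotent-splitting step to be the main obstacle: concretely, justifying that the summand $W$ provided by the decomposition in $\CD'$ can be represented, up to canonical isomorphism in $\CC$, by an object of $\CD$, so that the $\CD$-monomorphism property of $f$ becomes applicable and eliminates the contractible summand.
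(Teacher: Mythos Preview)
Your proposal is correct and follows essentially the same approach as the paper's proof: both use Proposition \ref{DexactC} to decompose the triangle, invoke idempotent completeness of the abelian category $\CD$ (together with fullness of $\CD$ in $\CC$) to realise the summand $W$ inside $\CD$, and then kill $W$ using that $f$ is a monomorphism in $\CD$. The paper phrases the last step slightly differently---writing $f$ as the direct sum in $\CD$ of $f'$ and the zero morphism $W\to 0$---but this is equivalent to your argument via $f\circ\iota_0=0$.
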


\begin{proof}
Let 
$$\xymatrix{0\ar[r] & X \ar[r]^{f} & Y \ar[r]^{g} & Z \ar[r] &0}$$
be a short exact sequence in $\CD$. We need to show that this sequence
remains exact in $\CD'$. Since $\CD$ is a distinguished abelian
subcategory of $\CC$, it follows that there is a morphism
$h : Z\to\Sigma(X)$ in $\CD$ such that the triangle 
$$\xymatrix{X \ar[r]^{f} & Y \ar[r]^{g} & Z \ar[r]^{h} & \Sigma(X)}$$
in $\CC$ is exact. The objects $X$, $Y$, $Z$ belong to $\CD$, hence to
$\CD'$. By Proposition \ref{DexactC}, this triangle is a
direct sum of exact triangle of the form 
$$\xymatrix{X' \ar[r]^{f'} & Y \ar[r]^{g'} & Z' \ar[r]^{h'} 
& \Sigma(X')}$$
$$\xymatrix{W \ar[r] & 0 \ar[r] & \Sigma(W) \ar@{=}[r] & \Sigma(W)}$$
where $X'$, $Z'$, $W$, $\Sigma(W)$ are in $\CD'$, and where the sequence
$$\xymatrix{0\ar[r] & X' \ar[r]^{f'} & Y \ar[r]^{g'} & Z' \ar[r] &0}$$
is exact in $\CD'$. Since $\CD$ is full in $\CC$, hence in $\CD'$ and
since any abelian category is idempotent complete, it follows that 
$X'$, $Z'$, $W$ belong to $\CD$ (up to isomorphism). Since $\CD$ is a 
full subcategory of $\CC$, hence of $\CD'$, it follows that the 
morphisms $f'$, $g'$ belong to $\CD$. Thus $f$ is  the direct sum in 
$\CD$ of $f'$ and the zero morphism $W\to$ $0$. But $f$ is also a 
monomorphism in $\CD$, and hence $W=0$. The result follows.
\end{proof}

\begin{Proposition} \label{abelianZorn}
Let $\CC$ be an essentially small triangulated category. Every 
distinguished abelian subcategory of $\CC$ is contained in a maximal 
distinguished abelian subcategory of $\CC$, with respect to the 
inclusion of subcategories.
\end{Proposition}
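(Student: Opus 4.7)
The plan is to apply Zorn's Lemma to the partially ordered collection $\mathcal{S}$ of all distinguished abelian subcategories of $\CC$ containing a given distinguished abelian subcategory $\CD_0$, ordered by inclusion of (full) subcategories. Since $\CC$ is essentially small, the collection of full additive subcategories (up to an obvious equivalence) is a set, so $\mathcal{S}$ is a set. It therefore suffices to show that every chain $(\CD_i)_{i\in I}$ in $\mathcal{S}$ admits an upper bound in $\mathcal{S}$.

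Given such a chain, define $\CD_\infty$ to be the full subcategory of $\CC$ whose objects are those lying in $\CD_i$ for some $i\in I$. Fullness in $\CC$ is automatic, and $\CD_\infty$ clearly contains $\CD_0$. The key observation is that because $(\CD_i)$ is a chain, any finite collection of objects of $\CD_\infty$ already lies in some common $\CD_j$, and any morphism in $\CD_\infty$ between such objects is a morphism in that $\CD_j$.

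Next I would check that $\CD_\infty$ is abelian. Given objects $X,Y$ of $\CD_\infty$ inside some $\CD_j$, their biproduct formed in $\CD_j$ is a biproduct in $\CD_\infty$, so $\CD_\infty$ is additive. Given a morphism $f:X\to Y$ in $\CD_\infty$, pick $j$ with $X,Y\in \CD_j$; then $f$ has a kernel $K\in \CD_j$ and cokernel $C\in \CD_j$. The content here is that $K$ and $C$ are also a kernel and a cokernel in $\CD_\infty$: if $Z\in \CD_\infty$ admits a morphism to $X$ annihilated by $f$, choose $k\geq j$ with $Z\in \CD_k$, and invoke Proposition \ref{abelianinclusion} to conclude that the inclusion $\CD_j\subseteq \CD_k$ is exact, so the kernel of $f$ computed in $\CD_k$ agrees with $K$; the universal property in $\CD_k$ then yields the desired factorisation in $\CD_\infty$. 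The analogous argument handles cokernels, and the epi–mono factorisation axiom follows from its validity in a single $\CD_j$ (again using that exactness is preserved under the chain inclusions).

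Finally I would verify that $\CD_\infty$ is distinguished. Let
$$\xymatrix{0\ar[r] & X\ar[r]^{f} & Y\ar[r]^{g} & Z\ar[r] & 0}$$
be a short exact sequence in $\CD_\infty$. Choose $j$ with $X,Y,Z$ and $f,g$ all in $\CD_j$; by the previous paragraph and Proposition \ref{abelianinclusion}, this sequence is exact already in $\CD_j$. Since $\CD_j$ is distinguished in $\CC$, there exists $h:Z\to \Sigma(X)$ completing the sequence to an exact triangle in $\CC$. Hence $\CD_\infty\in \mathcal{S}$ is an upper bound of the chain, and Zorn's Lemma furnishes a maximal distinguished abelian subcategory of $\CC$ containing $\CD_0$. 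The main subtlety, handled by Proposition \ref{abelianinclusion}, is checking that kernels, cokernels, and short exact sequences are computed consistently as one moves along the chain; once that is in place, the verification of the Zorn hypothesis is formal.
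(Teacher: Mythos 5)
Your proof is correct and follows essentially the same route as the paper: form the union $\CD_\infty$ of a chain, invoke Proposition \ref{abelianinclusion} to see that kernels, cokernels, and exactness are computed consistently across the chain, observe that every short exact sequence in $\CD_\infty$ lives in some single member and hence lifts to a triangle, and apply Zorn's Lemma. One small point worth making explicit is the uniqueness of the factorisation through a kernel (or cokernel) in $\CD_\infty$: the universal property in $\CD_k$ gives uniqueness only within $\CD_k$, so a competing factorisation in $\CD_\infty$ must first be pushed into a larger member of the chain where the kernel property and monicity again apply, exactly as the paper spells out.
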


\begin{proof}
We may assume that $\CC$ is small, so that the distinguished abelian
subcategories form a set. Let $\CT$ be a totally ordered set of 
distinguished abelian subcategories of $\CC$, where the order is by 
inclusion. In view of Zorn's Lmma, we need to show that $\CT$ has an
upper bound. We claim that $\CE = \cup_{\CD\in\CT}\ \CD$ is such an 
upper bound. We need to show that $\CE$ is a distinguished abelian 
subcategory. By construction, $\CE$ is a full subcategory of $\CC$.
We show next that $\CE$ is an abelian category.
Let $f : X\to$ $Y$ be a morphism in $\CE$. Then there is $\CD\in$ $\CT$
containing $X$, $Y$, and since $\CD$ is a full subcategory of $\CC$,
it follows that $f$ is a morphism in $\CD$. Thus $f$ has a kernel
$a:W\to$ $X$ in $\CD$. We are going to show that $a$ is a kernel of
$f$ in $\CE$. Let $g : Z\to$ $X$ a morphism in $\CE$ such that
$f\circ g=0$. We need to show that $g$ factors uniquely through $a$.
Since $\CT$ is totally ordered, there is $\CD'\in$ $\CT$ such that
$\CD\subseteq$ $\CD'$ and such that $g$ is a morphism in $\CD'$.
By Proposition \ref{abelianinclusion}, the morphism $a$ remains a
kernel of $f$ as a morphism in $\CD'$. Thus there is a unique
morphism $h : Z\to$ $W$ in $\CD'$ such that $a\circ h=$ $g$. 
We need to show that $h$ is unique in $\CE$ with this property.
Let $j : Z\to$ $W$ be a morphism in $\CE$ such that $a\circ j=$ $g$.
Then $j$ belongs to a category $\CD''\in$ $\CT$, which we may choose
such that $\CD'\subseteq$ $\CD''$. Again by Proposition 
\ref{abelianinclusion}, the morphism $a$ remains a monomorphism in
$\CD''$. Since $a\circ j=$ $g=$ $a\circ h$, it follows that
$j=h$. This shows that the kernel of $f$ in any subcategory $\CD\in$
$\CT$ containing $f$ is the kernel of $f$ in $\CE$. A similar argument
shows that the cokernel of $f$ in any subcategory $\CD\in$ $\CT$
containing $f$ is the cokernel of $f$ in $\CE$. This implies
also that the canonical map $\coker(\ker(f))\to$ $\ker(\coker(f))$
in $\CE$ is an isomorphism, since it is an isomorphism in any
subcategory $\CD\in$ $\CT$ containing the morphism $f$.
By the above arguments,
any short exact sequence in $\CE$ is a short exact sequence in
$\CD$ for some $\CD\in$ $\CT$, hence can be completed to an exact
triangle in $\CC$. This shows that $\CE$ is a distinguished
abelian subcategory in $\CC$. Thus $\CT$ has an upper bound in the set 
of distinguished abelian subcategories of $\CC$. 
Zorn's Lemma implies the result.
\end{proof}

\begin{proof}[{Proof of Theorem \ref{ellA}}]
Statement (i) is Theorem \ref{AmodIembeddings}. For statement (ii), 
let $A$ be a finite-dimensional selfinjective algebra over a field $k$  
such that all simple $A$-modules are nonprojective. Then $J(A)$ contains 
its annihilator $\soc(A)$. Thus $\mod(A/J(A))$ is a distinguished 
abelian subcategory of $\modbar(A)$ containing all simple $A$-modules 
such that $\ell(\mod(A/J(A)))=$ $\ell(A)$. By Proposition 
\ref{abelianZorn} there is a maximal distinguished abelian subcategory 
in $\CC$ which contains $\mod(A/J(A))$. By Corollary 
\ref{AmodIembeddings-Cor1} the simple $A$-modules are exactly the simple 
objects in $\CD$. Thus $\ell(\CD)=$ $\ell(A)$, whence the result.
\end{proof}

The next two Propositions are tools for passing between short exact 
sequences in $\mod(A)$, for some finite-dimensional selfinjective 
algebra $A$, and short exact sequences in a distinguished abelian 
subcategory of the stable category $\modbar(A)$.

\begin{Proposition} \label{exactAandD1}
Let $A$ be a finite-dimensional selfinjective algebra over a field
$k$, and let $\CD$ be a distinguished abelian subcategory of
$\modbar(A)$. Let
$$\xymatrix{0 \ar[r] & X \ar[r]^{f} & Y  \ar[r]^{g} 
& Z \ar[r] & 0}$$
be a short exact sequence in $\CD$. Suppose that $X$, $Y$, $Z$
have no nonzero projective direct summands as $A$-modules.
The following hold.

\begin{enumerate}
\item[\rm (i)]
There is a finitely generated projective $A$-module $Q$ and a short 
exact sequence of $A$-modules
$$\xymatrix{0 \ar[r] & X \ar[r]^{a} & Y\oplus Q  \ar[r]^{b} 
& Z \ar[r] & 0}$$
such that $f$ and $g$ are the images of $a$ and $b$ in $\modbar(A)$,
respectively.

\item[\rm (ii)]
In addition, if $X$ or $Z$ is simple as an $A$-module, then $Q=$
$\{0\}$ in the first statement.
\end{enumerate}
\end{Proposition}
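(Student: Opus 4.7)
For part (i) I will lift the short exact sequence $0 \to X \to Y \to Z \to 0$ in $\CD$ to a short exact sequence in $\mod(A)$ by first constructing an auxiliary s.e.s.\ that realises the associated triangle up to a projective error on the third term, then correcting this error by a pullback, and finally adjusting the resulting surjection by a module automorphism of $Z$. Concretely, I would pick a representative $\tilde f : X \to Y$ of $f$ in $\mod(A)$ and an injective envelope $i_X : X \to I_X$; because $A$ is selfinjective the module $I_X$ is projective. The map $\alpha = \binom{\tilde f}{i_X} : X \to Y \oplus I_X$ is a monomorphism, and setting $W = \coker \alpha$ yields a s.e.s.\ $0 \to X \to Y \oplus I_X \to W \to 0$ whose induced triangle in $\modbar(A)$ has first morphism $f$ under the canonical isomorphism $Y \oplus I_X \cong Y$ in $\modbar(A)$.

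\textbf{Stripping the extra projective.} Next, I would compare this new triangle with the original triangle $X \xrightarrow{f} Y \xrightarrow{g} Z \xrightarrow{h} \Sigma X$; since they share their first morphism, the axioms of a triangulated category (together with Proposition \ref{hunique}) yield an isomorphism $\phi : \bar W \to Z$ in $\modbar(A)$ compatible with the identities on $X$ and $Y$. Since $Z$ has no projective summand, Krull--Schmidt together with the standard fact that stable isomorphisms between modules without projective summands lift to module isomorphisms gives $W \cong Z \oplus P$ in $\mod(A)$ for some projective $P$. I would then take the pullback of $\beta : Y \oplus I_X \to Z \oplus P$ along the inclusion $\iota_Z : Z \hookrightarrow Z \oplus P$, producing a short exact sequence $0 \to X \to M \to Z \to 0$ in $\mod(A)$. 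The inclusion $M \hookrightarrow Y \oplus I_X$ has projective cokernel $P$, hence splits, so $M \oplus P \cong Y \oplus I_X$, and since $Y$ has no projective summand, Krull--Schmidt gives $M \cong Y \oplus Q$ for a projective $Q$ with $Q \oplus P \cong I_X$. A final adjustment is required because the surjection $Y \oplus Q \to Z$ obtained from the pullback may differ from $g$ by a $\modbar(A)$-auto\-mor\-phism of $Z$; but such automorphisms lift to honest automorphisms of $Z$ in $\mod(A)$, because $\End_A^\pr(Z)$ is contained in the Jacobson radical of $\End_A(Z)$ whenever $Z$ has no projective summand, and post-composing the surjection with such a lift supplies the desired sequence.

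\textbf{Part (ii) and the main obstacle.} For part (ii) assume first that $X$ is simple. Since $f$ is a monomorphism in $\CD$ and $X \neq 0$, any nonzero representative $\tilde f : X \to Y$ is automatically injective, so the injective summand $I_X$ used above is unnecessary: I would start directly from $0 \to X \xrightarrow{\tilde f} Y \to \coker \tilde f \to 0$. The cokernel is stably isomorphic to $Z$, hence of the form $Z \oplus P''$ with $P''$ projective; the induced surjection $Y \twoheadrightarrow P''$ splits, so $P''$ embeds as a projective summand of $Y$, forcing $P'' = 0$. The automorphism correction of the previous paragraph then yields the sequence with $Q = 0$. The case where $Z$ is simple is dual: every nonzero representative of $g$ is surjective, and the projective summand that would otherwise be stripped from $\ker \tilde g$ would embed as a summand of $Y$, hence vanishes. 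I expect the main obstacle to be the reconciliation, in the stripping step, of two a priori different isomorphisms $\bar W \cong Z$ in $\modbar(A)$: the one provided by the triangulated axioms, which is compatible with $g$, and the one coming from the projection $W = Z \oplus P \to Z$ in $\mod(A)$, which comes out of the pullback. Bridging them depends on the automorphism-lifting fact for modules without projective summands, which is the reason the hypothesis on projective summands is needed throughout.
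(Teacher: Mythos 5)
Your route to part (i) is genuinely different from the paper's. The paper simply invokes the known structure of exact triangles in $\modbar(A)$: the triangle attached to the short exact sequence in $\CD$ is induced by some short exact sequence $0 \to X\oplus P \to Y\oplus Q \to Z\oplus R \to 0$ in $\mod(A)$ whose maps reduce to $f,g$, and then it strips $P$ and $R$ directly using that projective $=$ injective. You instead re-derive that fact by hand: build the mapping-cone sequence $0 \to X \xrightarrow{\binom{\tilde f}{i_X}} Y\oplus I_X \to W \to 0$, compare triangles, pull back along $Z \hookrightarrow Z\oplus P$, and finish by Krull--Schmidt. Both work, and your version is more self-contained, at the price of more surgery. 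For part (ii) the approaches are also different: the paper works inside the exact sequence from (i), defining the submodule $Q' = \{(-t(x),x)\}$ of $Y\oplus Q$ and showing it must vanish, while you restart from a nonzero representative of $f$ (resp.\ $g$), which is automatically a monomorphism (resp.\ epimorphism) because $X$ (resp.\ $Z$) is simple, and strip the projective error from the cokernel (resp.\ kernel). Both arguments are sound.

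There is, however, a gap in your final adjustment in part (i). You take an arbitrary Krull--Schmidt isomorphism $\mu : M \xrightarrow{\sim} Y\oplus Q$ and claim that the resulting surjection $Y\oplus Q\to Z$ can only differ from $g$ by an automorphism of $Z$. That is not true for an arbitrary $\mu$: if $\bar\mu \circ \bar j^{-1} = \eta$ (where $j: M\hookrightarrow Y\oplus I_X$ is the inclusion, a stable isomorphism), then the resulting monomorphism $X\to Y\oplus Q$ reduces to $\eta\circ f$ and the surjection reduces to $\lambda\circ g\circ\eta^{-1}$; post-composing the surjection by a lift of $\lambda^{-1}$ does not repair the $\eta$-discrepancy on the monomorphism side, which you never check. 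The fix is to take the \emph{specific} isomorphism $\mu$ induced by the split surjection $\pi_Y\circ j : M\to Y$ (which has projective kernel since $\bar j$ is an isomorphism and $Y$ has no projective summand). With this choice the first component of $\mu\circ a_0$ is literally $\tilde f$, so the monomorphism side is correct on the nose, and then your lift of the remaining automorphism of $Z$ (using $\End^\pr_A(Z)\subseteq J(\End_A(Z))$, valid since $Z$ has no projective summand) completes the argument. You should make this choice of $\mu$ explicit; as written, the ``final adjustment'' step is insufficient.
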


\begin{proof}
Since $\CD$ is a distinguished abelian subcategory of $\modbar(A)$,
it follows that the given exact sequence in $\CD$ gives rise
to an exact triangle 
$$\xymatrix{X \ar[r]^{f} & Y \ar[r]^{g} & Z \ar[r]^{h} & \Sigma(X)}$$
in $\modbar(A)$, for some morphism $h$. By the construction of
exact triangles in $\modbar(A)$, this exact triangle is induced
by a short exact sequence of $A$-modules of the form 
$$\xymatrix{0 \ar[r] & X\oplus P \ar[r]^{a} & Y\oplus Q  \ar[r]^{b} 
& Z \oplus R\ar[r] & 0}$$
for some finitely generated projective $A$-modules $P$, $Q$, $R$, such 
that $f$ and $g$ are the images of $a$ and $b$ in $\modbar(A)$. Since 
$a$ is injective, and since the $A$-module $P$ is projective, hence 
also injective, it follows that $a(P)\cong$ $P$ splits off the middle 
term $Y\oplus Q$. Since $Y$ has no nonzero projective summand, it 
follows that we may assume $P=$ $\{0\}$. A similar argument shows that 
we may assume $R=$ $\{0\}$, whence the first statement. 

For the second statement, assume first that $Z$ is simple as an 
$A$-module. Write $b=$ 
$\begin{pmatrix} r \\ s \end{pmatrix}$, 
where $r=$ $b|_Y : Y\to Z$
and $s=b|_Q : Q\to Z$. Since $g\neq 0$ in $\modbar(A)$, it follows 
that $r\neq 0$, hence $r$ is surjective as $Z$ is simple. Since
$Q$ is projective as an $A$-module, it follows that $s$ factors
through $r$. Write $s = r\circ t$ for some $t : Q\to$ $Y$. Set
$Q'=$ $\{(-t(x),x)\ |\ x\in Q\}$. This is a submodule of $Y\oplus Q$,
isomorphic to $Q$, and contained in $\ker(b)$. Since $Q'$ is
also injective, it follows that $Q'$ is isomorphic to a direct summand 
of $X$, hence $Q'=\{0\}$ by the first statement, and so also $Q=$ 
$\{0\}$. Assume next that $X$ is simple. Writing $a = (u,v) : X\to$ 
$Y\oplus Q$, we have that $u\neq$ $0$, so $u$ is injective. Thus $a(X)$ 
is not contained in the summand $Q$, hence intersects this summand 
trivially since $X$ is simple. Thus $b$ sends $Q$ to a submodule of $Z$ 
isomorphic to $Q$. Since $Q$ is also injective as an $A$-module, it 
follows that $Q$ is isomorphic to a direct summand of $Z$, hence zero 
by the first statement. This proves the second statement.
\end{proof}

\begin{Proposition} \label{exactAandD2}
Let $A$ be a finite-dimensional selfinjective algebra over a field
$k$, and let $\CD$ be a distinguished abelian subcategory of
$\modbar(A)$. Let
$$\xymatrix{0 \ar[r] & X \ar[r]^{a} & Y\oplus Q  \ar[r]^{b} 
& Z \ar[r] & 0}$$
be a short exact sequence of $A$-modules such that $Q$ is a projective
$A$-module, and such that the $A$-modules $X$, $Y$, $Z$ belong to $\CD$.
Suppose that as an object in $\CD$, $X$ has no nonzero injective direct 
summand, or that as an object in $\CD$, $Z$ has no nonzero projective 
direct sumand. Then the sequence
$$\xymatrix{0 \ar[r] & X \ar[r]^{f} & Y  \ar[r]^{g} 
& Z \ar[r] & 0}$$
is exact in $\CD$, where $f=\underline{a}$ and $g=$ $\underline{b}$
are the images in $\modbar(A)$ of $a$ and $b$, respectively.
\end{Proposition}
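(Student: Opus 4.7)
The plan is to transport the given short exact sequence to an exact triangle in $\modbar(A)$ with all three terms in $\CD$, and then apply Proposition \ref{DexactC} to recover a short exact sequence in $\CD$. The sequence $0 \to X \to Y \oplus Q \to Z \to 0$ in $\mod(A)$ induces, by the construction of exact triangles in $\modbar(A)$, an exact triangle with terms $X$, $Y \oplus Q$, $Z$, $\Sigma(X)$ and first two morphisms $\underline{a}$, $\underline{b}$. Since $Q$ is projective, the canonical inclusion $Y \to Y \oplus Q$ and projection $Y \oplus Q \to Y$ are mutually inverse isomorphisms in $\modbar(A)$; composing with them I obtain an exact triangle
$$\xymatrix{X \ar[r]^{f} & Y \ar[r]^{g} & Z \ar[r]^{h} & \Sigma(X)}$$
in $\modbar(A)$ with $f = \underline{a}$ and $g = \underline{b}$ as in the statement.

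Because $X$, $Y$, $Z$ all lie in $\CD$, Proposition \ref{DexactC} applies and expresses this triangle as a direct sum of an exact triangle induced by a short exact sequence $0 \to X' \to Y \to Z' \to 0$ in $\CD$ together with a trivial triangle of the shape $W \to 0 \to \Sigma(W) \to \Sigma(W)$ (last arrow the identity), where $X'$, $Z'$, $W$, $\Sigma(W)$ all lie in $\CD$, $W$ is injective in $\CD$, and $\Sigma(W)$ is projective in $\CD$. In particular $X \cong X' \oplus W$ and $Z \cong Z' \oplus \Sigma(W)$ as decompositions in $\CD$, using that $\CD$ is full in $\modbar(A)$.

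The hypothesis now forces $W = 0$ in either case: if $X$ has no nonzero injective summand in $\CD$, the injective summand $W$ vanishes; if instead $Z$ has no nonzero projective summand in $\CD$, then $\Sigma(W)$ vanishes, and hence so does $W$. The trivial summand of the decomposition therefore disappears, and the original triangle is itself induced by a short exact sequence in $\CD$ whose first two morphisms are $f$ and $g$. This gives the desired exactness of $0 \to X \xrightarrow{f} Y \xrightarrow{g} Z \to 0$ in $\CD$.

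I do not anticipate a serious obstacle. The one subtle point is that Proposition \ref{DexactC} places the summand $W$ (together with its shift) inside $\CD$, so that the hypothesis on the absence of injective or projective summands in $\CD$ applies as stated; this is already built into the conclusion of Proposition \ref{DexactC}, and consequently the argument reduces to a direct application of that proposition once the initial triangle has been assembled.
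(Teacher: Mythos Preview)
Your proof is correct and follows essentially the same route as the paper's: pass to the exact triangle $X\to Y\to Z\to\Sigma(X)$ in $\modbar(A)$, invoke Proposition~\ref{DexactC} to split off the summand $W\to 0\to\Sigma(W)\to\Sigma(W)$ with $W$ injective and $\Sigma(W)$ projective in $\CD$, and use the hypothesis on $X$ or $Z$ to force $W=0$. The paper phrases the conclusion as ``$f$ is a monomorphism in $\CD$ if and only if $W=0$, equivalently $g$ is an epimorphism in $\CD$'', but this is the same argument.
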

 
\begin{proof}
By the construction of $\modbar(A)$ as a triangulated category, 
the given short exact sequence of $A$-modules determines an exact 
triangle in $\modbar(A)$ of the form
$$\xymatrix{X \ar[r]^{f} & Y \ar[r]^{g} & Z \ar[r]^{h} & \Sigma(X)}$$
for some morphism $h$ in $\modbar(A)$. By Proposition \ref{DexactC}, 
this triangle is isomorphic to a direct sum of two exact triangles of 
the form
$$\xymatrix{X' \ar[r]^{f'} & Y \ar[r]^{g'} & Z' \ar[r]^{h'} 
& \Sigma(X')}$$
$$\xymatrix{W \ar[r] & 0 \ar[r] & \Sigma(W) \ar@{=}[r] & \Sigma(W)}$$
such that the sequence
$$\xymatrix{0\ar[r] & X' \ar[r]^{f'} & Y \ar[r]^{g'} & Z' \ar[r] &0}$$
is exact in $\CD$, where $W$ is an injective object in $\CD$ such that
$\Sigma(W)$ is a projective object in $\CD$. Thus $f$ is a monomorphism
in $\CD$ if and only if $W=\{0\}$, which is equivalent to 
$\Sigma(W)=\{0\}$, hence to $g$ being an epimorphism in $\CD$. 
The result follows.
\end{proof}

\begin{Remark} \label{mor-unique}
Any commutative rectangle in a triangulated category $\CC$ 
$$\xymatrix{ X \ar[r]^{f} \ar[d]_{a} & Y \ar[d]^{b} \\
            X' \ar[r]_{f'} & Y' } $$
can be completed to a morphism of exact triangles 
$$\xymatrix{X \ar[r]^{f} \ar[d]_{a} & Y \ar[r]^{g} \ar[d]^{b} 
& Z \ar[r]^{h} \ar[d]^{c}  & \Sigma(X) \ar[d]^{\Sigma(a)} \\
X' \ar[r]_{f'} & Y' \ar[r]_{g'} & Z' \ar[r]_{h'}  & \Sigma(X')}$$
In general, $c$ is not uniquely determined by $(a,b)$. 
If, however, the two exact triangles are determined by short exact 
sequences 
$$\xymatrix{0\ar[r] & X \ar[r]^{f} & Y \ar[r]^{g} & Z \ar[r] &0}$$
$$\xymatrix{0\ar[r] & X' \ar[r]^{f'} & Y' \ar[r]^{g'} & Z' \ar[r] &0}$$
in a distinguished abelian subcategory $\CD$ of $\CC$, then $a$ and $c$ 
are both determined by $b$ alone. Indeed, $\CD$ is a full subcategory of 
$\CC$, so $a$, $b$, $c$ all are morphisms in $\CD$, and since $f'$ is a 
monomorphism and $g$ an epimorphism in $\CD$, it follows that $b$ 
determines both $a$ and $c$. 
In particular, any endomorphism $(a,b,c)$ 
of the exact triangle 
$$\xymatrix{ X\ar[r]^{f} & Y \ar[r]^{g} & Z\ar[r]^{h} & \Sigma(Z) }$$
is determined by the endomorphism $b$ of $Y$, or equivalently, the 
algebra homomorphism from the endomorphism algebra of this triangle to 
$\End_\CC(Y)$ sending $(a,b,c)$ to $b$ is injective. This is a necessary 
criterion for an exact triangle to have the property that its components 
belong to a distinguished abelian subcategory of $\CC$.
\end{Remark}

\begin{Remark} \label{mor-unique2}
Remark \ref{mor-unique} can be rephrased as stating 
that the inclusion functor of a distinguished abelian 
subcategory $\CD$ of a triangulated category $\CC$ sends 
morphisms of exact sequences in $\CD$ to morphisms of exact 
triangles in $\CC$. Indeed,  if 
$$\xymatrix{0 \ar[r] & X \ar[d]_{a} \ar[r]^{f} & Y \ar[d]_{b} \ar[r]^{g} 
& Z \ar[d]^{c} \ar[r] & 0\\
0 \ar[r] & X' \ar[r]_{f'} & Y' \ar[r]_{g'} & Z' \ar[r] & 0}$$
is a commutative exact diagram in $\CD$, then $c$ is uniquely
determined by $b$, and hence the diagram 
$$\xymatrix{X \ar[d]_{a} \ar[r]^{f} & Y \ar[d]_{b} \ar[r]^{g} 
& Z \ar[d]^{c} \ar[r]^{h} & \Sigma(X) \ar[d]^{\Sigma(a)} \\
X' \ar[r]_{f'} & Y' \ar[r]_{g'} & Z' \ar[r]_{h'} & \Sigma(X')}$$
in $\CC$ is commutative, where $h$, $h'$ are the unique 
morphisms such that the rows are exact triangles. 

In a similar vein, given two composable monomorphisms
$X\to Y$ and $Y\to Z$ in $\CD$, the obvious diagram in $\CD$
$$\xymatrix{ X \ar[r] \ar@{=}[d] & Y \ar[r] \ar[d] & Y/X \ar[d] \\
X \ar[r] & Z \ar[r] \ar[d] & Z/X \ar[d] \\
 & Z/Y \ar@{=}[r]  & Z/Y }$$
describing the third isomorphism theorem can be extended 
uniquely to an octahedral diagram in $\CC$ of the form
$$\xymatrix{ X \ar[r] \ar@{=}[d] & Y \ar[r] \ar[d] & Y/X \ar[r] \ar[d] 
& \Sigma(X) \ar@{=}[d] \\
X \ar[r] & Z \ar[r] \ar[d] & Z/X \ar[d] \ar[r] & \Sigma(X) \ar[d] \\
 & Z/Y \ar@{=}[r] \ar[d]  & Z/Y \ar[r] \ar[d] & \Sigma(Y) \\
 & \Sigma(Y) \ar[r] & \Sigma(Y/X) 
}$$
\end{Remark}

The kernel and cokernel of a morphism in a distinguished abelian 
subcategory are related with the third term of the exact triangle
determined by that morphism, via the octahedron in $\CC$ associated
with an epi-mono factorisation of the morphism.

\begin{Proposition} \label{DoctahedronC}
Let $\CD$ be a distinguished abelian subcategory in a triangulated
category $\CC$. Every morphism $f : X\to Y$ in $\CD$ gives rise to an
octahedron in $\CC$ of the form
$$\xymatrix{ X \ar[r]^{u} \ar@{=}[d]&  C \ar[r]^{} \ar[d]_{v} 
&  \Sigma(\ker(f)) \ar[r] \ar[d]^{} &  \Sigma(X)\ar@{=}[d] \\
X \ar[r]_{f} &  Y \ar[r]_{g} \ar[d] &V \ar[r]_{h} \ar[d]_{}
 & \Sigma(X)\ar[d]^{\Sigma(u)} \\
 & \coker(f)  \ar@{=}[r] \ar[d]_{w} &  \coker(f) \ar[r]_{w}\ar[d] 
& \Sigma(C) \\
 & \Sigma(C) \ar[r]_{} &  \Sigma^2(\ker(f)) & & }$$
where $C$ is an object in $\CD$, $u$ an epimorphism in $\CD$, $v$
a monomorphism in $\CD$, and where $\ker(f)$ and $\coker(f)$ denote the 
kernel and cokernel of $f$ in $\CD$, respectively.
\end{Proposition}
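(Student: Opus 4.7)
The plan is to apply the octahedral axiom to an epi-mono factorisation of $f$ in the abelian category $\CD$. First, since $\CD$ is abelian, the morphism $f : X \to Y$ factors canonically as $f = v \circ u$, where $C = \Im(f)$, $u : X \to C$ is the canonical epimorphism with kernel $\ker(f)$, and $v : C \to Y$ is the canonical monomorphism with cokernel $\coker(f)$. This yields two short exact sequences in $\CD$:
\begin{equation*}
\xymatrix{0 \ar[r] & \ker(f) \ar[r] & X \ar[r]^{u} & C \ar[r] & 0}
\end{equation*}
\begin{equation*}
\xymatrix{0 \ar[r] & C \ar[r]^{v} & Y \ar[r] & \coker(f) \ar[r] & 0}
\end{equation*}
Because $\CD$ is a distinguished abelian subcategory of $\CC$, each of these short exact sequences can be completed to an exact triangle in $\CC$. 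Rotating the first triangle gives an exact triangle
$\xymatrix@1{X \ar[r]^{u} & C \ar[r] & \Sigma(\ker(f)) \ar[r] & \Sigma(X)}$,
which forms the top row of the octahedron. The second short exact sequence yields the exact triangle
$\xymatrix@1{C \ar[r]^{v} & Y \ar[r] & \coker(f) \ar[r] & \Sigma(C)}$,
which appears as the second column.

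Next, I would invoke the octahedral axiom applied to the composable pair $X \xrightarrow{u} C \xrightarrow{v} Y$ (with composition $f = v \circ u$). This produces an exact triangle $\xymatrix@1{X \ar[r]^{f} & Y \ar[r]^{g} & V \ar[r]^{h} & \Sigma(X)}$ (the middle row of the diagram, where $V$ is the cone of $f$) together with a fourth exact triangle $\xymatrix@1{\Sigma(\ker(f)) \ar[r] & V \ar[r] & \coker(f) \ar[r]^-{w} & \Sigma^{2}(\ker(f))}$ (the third column, obtained by identifying the cone of $u$ with $\Sigma(\ker(f))$ via the rotation above and the cone of $v$ with $\coker(f)$). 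The octahedral axiom supplies exactly the commutativity relations displayed in the diagram, including the identifications $\coker(f) = \coker(f)$ on the third row and the equality of the composite $V \to \coker(f) \xrightarrow{w} \Sigma(C)$ with the connecting morphism of the second-column triangle.

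The last step is to note that $u$ is an epimorphism in $\CD$ and $v$ is a monomorphism in $\CD$ by construction of the epi-mono factorisation, and that $C = \Im(f)$ is by definition an object of $\CD$. So every object and morphism labelled by the statement has the required property.

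There is really no substantive obstacle here: everything follows by direct bookkeeping once one identifies the cones of $u$, $v$, and $f$ respectively with $\Sigma(\ker(f))$, $\coker(f)$, and $V$. The only thing that deserves care is the sign/shift bookkeeping, specifically the rotation needed to read the exact triangle coming from the short exact sequence $0 \to \ker(f) \to X \to C \to 0$ as an exact triangle starting at $X$, so that it becomes eligible as one of the three faces feeding into the octahedral axiom for $f = v \circ u$. Once that rotation is carried out, the octahedral axiom produces the stated diagram verbatim.
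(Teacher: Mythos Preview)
Your proposal is correct and follows essentially the same approach as the paper: factor $f$ through its image $C$ in $\CD$, complete the two resulting short exact sequences to exact triangles using that $\CD$ is distinguished, rotate the first, and apply the octahedral axiom to $f = v\circ u$. The only cosmetic difference is that the paper writes $C = \coker(\ker(f)) \cong \ker(\coker(f))$ where you write $C = \Im(f)$, but in an abelian category this is the same object.
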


\begin{proof}
Since $\CD$ is abelian, we have a canonical isomorphism  $C=$ 
$\coker(\ker(f))\cong$ $\ker(\coker(f))$ in $\CD$. Since $\CD$ is a 
distinguished abelian subcategory, the obvious short exact sequences
$$\xymatrix{0\ar[r] & \ker(f) \ar[r]  & X \ar[r]^{u}  & C \ar[r] &0}$$
$$\xymatrix{0\ar[r] & C \ar[r]^{v}  & Y \ar[r]  & \coker(f) \ar[r] &0}$$
can be completed to exact triangles
$$\xymatrix{\ker(f) \ar[r]  & X \ar[r]^{u}  & C \ar[r] & 
\Sigma(\ker(f)) }$$
$$\xymatrix{C \ar[r]^{v}  & Y \ar[r]  & \coker(f) \ar[r]^{w} & 
\Sigma(C) }$$
Turning the first of these two exact triangles yields an exact 
triangle 
$$\xymatrix{X \ar[r]^{u}   & C \ar[r] & \Sigma(\ker(f)) \ar[r] 
& \Sigma(X) }$$
Thus an octahedron associated with the factorisation 
$$\xymatrix{ X \ar[rr]^{f} \ar[dr]_{u} &               & Y \\
                                       & C \ar[ur]_{v} &    }$$
has the form as stated.
\end{proof}

\begin{Remark} \label{octahedronRem}
With the notation of Proposition \ref{DoctahedronC}, the kernel and 
cokernel of $f$ and the factorisation of $f$ via $C$ are unique up to 
unique isomorphism. Once fixed, they determine the morphisms in the
top horizontal and left vertical exact triangle uniquely, by Lemma 
\ref{hunique}. 
\end{Remark}

%%%%%%%%%%%%%%%%%%%%%%%%%%%%%%%%%%%%%%%%%%%%%%%%%%%%%%%%%%%%%%%%%%%
\section{Extension closed distinguished abelian subcategories}
\label{ExtSection}

As mentioned in the introduction, unlike
hearts of $t$-structures, distinguished abelian subcategories 
in a triangulated category need not be extension closed - see
Proposition \ref{notextclosed} below. We develop criteria for a 
distinguished abelian subcategory $\CD$ to be extension closed in a 
triangulated category $\CC$. Since we will compare the shift functor
on $\CC$ to the shift operator on $\CD$, we  specify in this section
shift functors of triangulated categories.

Let $(\CC, \Sigma)$ be a $k$-linear triangulated 
category and let $\CD$ be a distinguished abelian subcategory of $\CC$ 
such that $\CD\cong$ $\mod(D)$ for some finite-dimensional 
$k$-algebra $D$. This hypothesis ensures that $\CD$ has enough 
injective objects. We are going to compare $\Ext_\CC^n(U,V)=$ 
$\Hom_\CC(U,\Sigma^n(V))$ and $\Ext_\CD^n(U,V)$, where $n\geq 0$.
For $n=0$ these two spaces are equal since $\CD$ is full in $\CC$.
We investigate the case $n=1$.
In order to calculate $\Ext^1_\CD$, we will make use of the usual
shift operator $\Sigma_\CD$ on $\CD$, defined as follows.
For each object $U$ in $\CD$ choose a (minimal) injective envelope 
$\iota_U : U\to$ $I_U$ in $\CD$, and set $\Sigma_\CD(U)=$ 
$\coker(\iota_U)$. 
That is, we have a short exact sequence in $\CD$ of the form
$$\xymatrix{0\ar[r] & U\ar[r]^{\iota_U} & I_U\ar[r] & \Sigma_\CD(U)
\ar[r] & 0}$$

\begin{Definition} \label{SignaDSigmaC}
Let $(\CC, \Sigma)$ be a $k$-linear triangulated 
category and let $\CD$ be a distinguished abelian subcategory of $\CC$ 
such that $\CD\cong$ $\mod(D)$ for some finite-dimensional 
$k$-algebra $D$. For each object $U$ in $\CD$ and each 
nonnegative integer $n$, define a morphism 
$$\sigma_{n,U} : \Sigma_\CD^n(U) \to \Sigma^n(U)$$
in $\CC$ inductively as follows. We set $\sigma_{0,U}=\Id_U$, assuming 
implicitly that $\Sigma^0_\CD$ (resp. $\Sigma^0$) is the identity 
operator (resp. identity functor) on $\CD$ (resp. $\CC$). We define 
$$\sigma_{1,U} : \Sigma_\CD(U) \to \Sigma(U)$$
as the unique morphism such that the triangle 
$$\xymatrix{U \ar[r]^{u}   & I_U \ar[r] 
& \Sigma_\CD(U) \ar[r]^{\sigma_{1,U}} & \Sigma(U) }$$
is exact. For $n\geq$ $2$, we define
$$\sigma_{n,U} = 
\Sigma(\sigma_{n-1,U})\circ \sigma_{1,\Sigma^{n-1}_\CD(U)}\ ,$$
where we identify $\Sigma_\CD\circ\Sigma_\CD^{n-1}=\Sigma_\CD^n$ and
$\Sigma\circ\Sigma^{n-1}=\Sigma^n$. 
\end{Definition}

In the situation above, $\Ext^1_\CD(-,-)$ is
a subbifunctor of $\Ext^1_\CC(-,-)$ restricted to $\CD$.

\begin{Theorem} \label{extDextC}
Let $(\CC, \Sigma)$ be a $k$-linear triangulated 
category and let $\CD$ be a distinguished abelian subcategory of $\CC$ 
such that $\CD\cong$ $\mod(D)$ for some finite-dimensional $k$-algebra 
$D$. For any two objects $U$, $V$ in $\CD$, the morphism
$\sigma_{1,V} : \Sigma_\CD(V)\to$ $\Sigma(V)$ induces an
injective map $\Ext^1_\CD(U,V) \to \Ext^1_\CC(U,V)$
which is natural in $U$ and $V$.  
\end{Theorem}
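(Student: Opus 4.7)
The plan is to define the map explicitly via post-composition with $\sigma_{1,V}$, and to deduce well-definedness, injectivity, and naturality from the long exact sequence associated to the defining triangle of $\sigma_{1,V}$. Since $\CD\cong\mod(D)$ has enough injectives and $I_V$ is injective in $\CD$, the short exact sequence
$$\xymatrix{0\ar[r] & V\ar[r] & I_V \ar[r] & \Sigma_\CD(V)\ar[r] & 0}$$
from Definition \ref{SignaDSigmaC} yields, via $\Hom_\CD(U,-)$, a connecting map $\delta : \Hom_\CD(U,\Sigma_\CD(V))\to \Ext^1_\CD(U,V)$ which is surjective (because $\Ext^1_\CD(U,I_V)=\{0\}$) with kernel equal to the image of $\Hom_\CD(U,I_V)\to \Hom_\CD(U,\Sigma_\CD(V))$.

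By the construction of $\sigma_{1,V}$, the triangle $V\to I_V\to \Sigma_\CD(V)\xrightarrow{\sigma_{1,V}} \Sigma(V)$ is exact in $\CC$. Applying $\Hom_\CC(U,-)$ yields the exact sequence
$$\Hom_\CC(U,I_V) \to \Hom_\CC(U,\Sigma_\CD(V)) \xrightarrow{\sigma_{1,V}\circ -} \Hom_\CC(U,\Sigma(V))\ .$$
Since $\CD$ is a full subcategory of $\CC$, the first two spaces coincide with the corresponding $\Hom_\CD$-spaces. Consequently, the kernel of the map $\Phi : \varphi\mapsto \sigma_{1,V}\circ\varphi$ from $\Hom_\CD(U,\Sigma_\CD(V))$ to $\Hom_\CC(U,\Sigma(V))$ coincides with $\ker(\delta)$, and $\Phi$ descends to an injection $\Ext^1_\CD(U,V)\hookrightarrow \Ext^1_\CC(U,V)$, establishing both well-definedness and injectivity at once.

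For naturality, contravariance in $U$ is immediate, since both $\delta$ and $\Phi$ intertwine precomposition with any morphism $U'\to U$ in $\CD$. Covariance in $V$ is the only subtle point, as the injective envelope is not functorial on the nose: given $\beta : V\to V'$ in $\CD$, the injectivity of $I_{V'}$ provides some extension $\beta' : I_V\to I_{V'}$ of $\beta$, inducing a morphism $\beta'' : \Sigma_\CD(V)\to \Sigma_\CD(V')$ on cokernels. Remark \ref{mor-unique2} then ensures that this commutative diagram of short exact sequences in $\CD$ lifts uniquely to a morphism of exact triangles in $\CC$, giving the identity $\Sigma(\beta)\circ\sigma_{1,V}=\sigma_{1,V'}\circ\beta''$. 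The induced map on $\Ext^1$ is independent of the choice of $\beta'$ because the difference of two such choices factors through $\iota_{V'}$, and is therefore killed by $\delta$. The main obstacle is precisely this non-canonicity of the injective envelope; it is dispatched by the kernel computation already used for injectivity together with Remark \ref{mor-unique2}.
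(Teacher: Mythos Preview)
Your argument is correct and follows essentially the same route as the paper's proof: identify $\Ext^1_\CD(U,V)$ as the cokernel of $\Hom_\CD(U,I_V)\to\Hom_\CD(U,\Sigma_\CD(V))$, then read off injectivity from the long exact sequence of the defining triangle, with naturality in $V$ handled by lifting $\beta$ to injective envelopes and checking independence of the lift. One small slip in your final sentence: the difference of two lifts $\beta'_1-\beta'_2:I_V\to I_{V'}$ does not factor through $\iota_{V'}:V'\to I_{V'}$; rather, since it vanishes on $V$, it factors through the surjection $I_V\to\Sigma_\CD(V)$, and hence the induced difference $\beta''_1-\beta''_2:\Sigma_\CD(V)\to\Sigma_\CD(V')$ factors through the surjection $I_{V'}\to\Sigma_\CD(V')$, so its image lies in $\ker(\delta')$ (the connecting map for $V'$, not $\delta$). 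With that correction your argument matches the paper's.
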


\begin{proof}
We start with the standard description of
calculating $\Ext_\CD^1(U,V)$ using an injective resolution of $V$
$$\xymatrix{0\ar[r] & V \ar[r]^{\iota_V} & I^0 \ar[r]^{\delta^0}
& I^1 \ar[r]^{\delta^1} & I^2\ar[r] &\cdots}$$
with notation chosen such that $I^0=$ $I_V$, and $\Im(\delta^0)=$
$\ker(\delta^1)=$ $\Sigma_\CD(V)$. By definition, $\Ext^1_\CD(U,V)$ is
the degree $1$ cohomology of the cochain complex obtained from applying
$\Hom_\CD(U,-)$ to the above injective resolution of $V$, of the form
$$\xymatrix{\Hom_\CD(U,I^0) \ar[r] & \Hom_\CD(U, I^1) \ar[r] &
\Hom_\CD(U,I^2) \ar[r] & \cdots}$$
where the first two maps are induced by composing with $\delta^0$ and
$\delta^1$, respectively. A morphism $\varphi : U\to I^1$ is in the
kernel of the second map if and only if it factors through
$\ker(\delta^1)=$ $\Im(\delta^0)=$ $\Sigma_\CD(V)$. Thus the kernel of 
the map
$$\xymatrix{\Hom_\CD(U, I^1) \ar[r] & \Hom_\CD(U,I^2)}$$
can be identified with $\Hom_\CD(U,\Sigma_\CD(V))$, and hence
$\Ext^1_\CD(U,V)$ is the cokernel of the map
$$\xymatrix{\Hom_\CD(U, I^0) \ar[r] & \Hom_\CD(U,\Sigma_\CD(V))}$$
induced by composition with $\delta^0$.

Applying the functor $\Hom_\CC(U,-)$ to the exact triangle 
$$\xymatrix{V \ar[r]^{u}   & I_V \ar[r] 
& \Sigma_\CD(V) \ar[r]^{\sigma_{1,V}} & \Sigma(V) }$$
yields an exact sequence
$$\xymatrix{\cdots \ar[r] & \Hom_\CD(U, I_V) \ar[r] 
&\Hom_\CD(U, \Sigma_\CD(V)) \ar[r] & \Hom_\CC(U,\Sigma(V)) \ar[r]
&\cdots }$$
Taking the quotient of the middle term by the image of the left term
yields a monomorphism $\Ext_\CD(U,V)\to$ $\Hom_\CC(U,\Sigma(V))=$
$\Ext_\CC^1(U,V)$ as stated. We need to show the naturality. Since
this map is defined by applying the functor $\Hom_\CC(U,-)$ to the
above diagram, and since the Yoneda embedding $U\mapsto$ $\Hom_\CC(U,-)$
is contravariantly functorial in $U$, it follows immediately that the
map $\Ext_\CD(U,V)\to$ $\Hom_\CC(U,\Sigma(V))=$ $\Ext^1_\CC(U,V)$ is
contravariantly functorial in $U$. 

In order to show functoriality in
$V$, let $\psi : V\to$ $W$ be a morphism in $\CD$. Then $\psi$
extends to a morphism $I_V\to$ $I_W$, and hence there is a commutative
diagram of exact triangles
$$\xymatrix{V\ar[r] \ar[d]_{\psi} & I_V\ar[r]\ar[d] 
&\Sigma_\CD(V) \ar[r]^{\sigma_{1,V}} \ar[d]^{\tau} 
& \Sigma(V) \ar[d]^{\Sigma(\psi)} \\
W\ar[r]  & I_W\ar[r] &\Sigma_\CD(W) \ar[r]_{\sigma_{1,W}} 
& \Sigma(W) }$$
The morphism $\tau$ depends on the choice of an extension of
$\psi$ to $I_V\to$ $I_W$. If $\tau$, $\tau'$ are two morphisms
making the above diagram commutative, then
$\sigma_{1,W}\circ(\tau-\tau')=0$, and hence $\tau-\tau'$ factors
through the morphism $I_W\to \Sigma_\CD(V)$ in the diagram.
Thus applying $\Hom_\CD(U,-)$ to $\tau-\tau'$ induces the zero
map $\Ext^1_\CD(U,V)\to$ $\Ext^1_\CD(U,W)$, showing the
functoriality in $V$. This proves the result.
\end{proof}

The following immediate consequence of Theorem \ref{extDextC}
shows that $\CD$-mutation pairs (cf. \cite[Definition 2.5]{IY}), with $\CD$ 
a distinguished abelian subcategory equivalent to the module category 
of a finite-dimensional algebra, arise only if $\CD$ is semisimple.

\begin{Corollary} \label{extDextCCor}
Let $(\CC, \Sigma)$ be a $k$-linear triangulated 
category and let $\CD$ be a distinguished abelian subcategory of $\CC$ 
such that $\CD\cong$ $\mod(D)$ for some finite-dimensional $k$-algebra 
$D$. Suppose that $\Ext^1_\CC(X,Y)=0$ for all $X$, $Y$ in $\CD$.
Then the $k$-algebra $D$ is semisimple.
\end{Corollary}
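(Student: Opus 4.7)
The plan is to deduce this directly from Theorem \ref{extDextC}. By that theorem, for any two objects $U$, $V$ in $\CD$, the map
\[
\Ext^1_\CD(U,V) \longrightarrow \Ext^1_\CC(U,V)
\]
induced by $\sigma_{1,V}$ is injective. The hypothesis is that the target vanishes for all $U,V$ in $\CD$, so $\Ext^1_\CD(U,V) = 0$ for all $U,V$ in $\CD$.

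Next I would transfer this vanishing across the equivalence $\CD \cong \mod(D)$. Since $\Ext^1$ computed in an abelian category is an invariant of the abelian category, we obtain $\Ext^1_D(M,N)=0$ for all finitely generated $D$-modules $M$, $N$. In particular every short exact sequence of finitely generated $D$-modules splits, which is equivalent to $D$ being semisimple (for instance, every simple $D$-module is then projective, so $D/J(D)$ is projective as a $D$-module, which forces $J(D)=0$).

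There is essentially no obstacle here; the content of the corollary is already packaged into Theorem \ref{extDextC}. The only point that needs a line of justification is that $\Ext^1$ is preserved by the equivalence of abelian categories $\CD \cong \mod(D)$, which is a standard fact (both sides classify Yoneda extensions, and the equivalence preserves short exact sequences). The naturality asserted in Theorem \ref{extDextC} is not needed for this corollary — only the injectivity of the comparison map.
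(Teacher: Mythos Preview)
Your proof is correct and follows essentially the same route as the paper: invoke the injectivity from Theorem \ref{extDextC} to conclude $\Ext^1_\CD(U,V)=0$ for all $U,V$, transfer this to $\mod(D)$ via the equivalence, and deduce that $D$ is semisimple. The paper's proof is a one-line version of exactly this argument.
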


\begin{proof}
The hypotheses and Theorem \ref{extDextC} imply that $\Ext^1_D(U,V)=0$ for
any two finitely generated $D$-modules $U$, $V$, whence the result.
\end{proof}

See Dugas \cite{Dugas15} for torsion pairs and mutation in stable module
categories of selfinjective algebras, as well as the references therein.
The next result is a  criterion when the canonical maps
$\Ext^1_\CD(U,V)\to$ $\Ext^1_\CC(U,V)$ in the previous Theorem yield an 
isomorphism of bifunctors on $\CD$. 

\begin{Theorem} \label{Dextclosed1}
Let $(\CC, \Sigma)$ be a $k$-linear triangulated 
category and let $\CD$ be a distinguished abelian subcategory of $\CC$ 
such that $\CD\cong$ $\mod(D)$ for some finite-dimensional $k$-algebra 
$D$. The category $\CD$ is extension closed in $\CC$ if and only if the 
morphisms $\sigma_{1,V}$ induce isomorphisms $\Ext^1_\CD(U,V)\cong$ 
$\Ext^1_\CC(U,V)$ for all objects $U$, $V$ in $\CD$.
\end{Theorem}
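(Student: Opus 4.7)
The plan is to exploit Theorem~\ref{extDextC}, which already produces the natural injective map $\Ext^1_\CD(U,V)\hookrightarrow\Ext^1_\CC(U,V)$ given by composition with $\sigma_{1,V}$; what remains is to characterize its surjectivity in terms of extension closedness.

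For the direction \emph{isomorphism $\Rightarrow$ extension closed}, I would take an arbitrary exact triangle $V\xrightarrow{f}W\xrightarrow{g}U\xrightarrow{\alpha}\Sigma_\CC(V)$ in $\CC$ with $V,U\in\CD$, lift $\alpha\in\Ext^1_\CC(U,V)$ through the assumed isomorphism to a class in $\Ext^1_\CD(U,V)$, and represent the latter by a short exact sequence $0\to V\to W'\to U\to 0$ in $\CD$. The map in Theorem~\ref{extDextC} is built so that the connecting morphism of the triangle in $\CC$ associated to this SES is precisely $\alpha$; combining this with the uniqueness up to (non-canonical) isomorphism of the third vertex of a triangle determined by its first two terms and connecting map, together with Proposition~\ref{hunique}, yields $W\cong W'\in\CD$, hence extension closedness.

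For the converse, given $\alpha:U\to\Sigma_\CC(V)$, I would complete it to an exact triangle $V\xrightarrow{f}W\xrightarrow{g}U\xrightarrow{\alpha}\Sigma_\CC(V)$. Extension closedness gives $W\in\CD$, and Proposition~\ref{DexactC} decomposes the triangle as a direct sum of a triangle induced by a SES $0\to V'\to W\to U'\to 0$ in $\CD$ and a trivial triangle $W_0\to 0\to\Sigma_\CC(W_0)\xrightarrow{=}\Sigma_\CC(W_0)$, with $V\cong V'\oplus W_0$ and $U\cong U'\oplus\Sigma_\CC(W_0)$ in $\CD$. Granting that the trivial piece vanishes, $f$ becomes a monomorphism in $\CD$, the injective envelope map $\iota_V:V\to I_V$ extends along $f$ to a morphism $W\to I_V$ in $\CD$, and TR3 then furnishes the desired lift $\beta:U\to\Sigma_\CD(V)$ with $\sigma_{1,V}\circ\beta=\alpha$, exhibiting $\alpha$ as the image of an element of $\Ext^1_\CD(U,V)$.

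The hard part is to pin down that $W_0=0$ under extension closedness. By Corollary~\ref{WSigmaW}, a nonzero $W_0$ would lie in $\CD$ with $\Sigma_\CC(W_0)$ also in $\CD$, $W_0$ injective and $\Sigma_\CC(W_0)$ projective in $\CD$. The plan is to contradict this by picking a simple $S\in\CD$ embedded in the socle of $\Sigma_\CC(W_0)$; the resulting inclusion $S\hookrightarrow\Sigma_\CC(W_0)$ gives a nonzero class in $\Ext^1_\CC(S,W_0)$ which cannot come from $\Ext^1_\CD(S,W_0)=0$ (the latter vanishes because $W_0$ is injective in $\CD$), and analysing the Proposition~\ref{DexactC} decomposition of the completed triangle $W_0\to W\to S\to\Sigma_\CC(W_0)$ shows that for a suitable choice of $S$ (one not isomorphic to the $\Sigma_\CC$-shift of any direct summand of $W_0$) the middle term $W$ is forced to lie outside $\CD$, contradicting extension closedness.
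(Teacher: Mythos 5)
Your argument for the direction \emph{isomorphism $\Rightarrow$ extension closed} is correct, and it takes a genuinely different route from the paper's. You realize a preimage class in $\Ext^1_\CD(U,V)$ by a short exact sequence $0 \to V \to W' \to U \to 0$, observe (using Remark~\ref{mor-unique2} and the definition of $\sigma_{1,V}$) that the triangle in $\CC$ induced by this sequence has connecting morphism exactly $\alpha$, and then conclude $W \cong W'$ by uniqueness of cones. The paper instead lifts $\psi$ through $\sigma_{1,U}$ to a map $W \to \Sigma_\CD(U)$, forms the pullback of $I_U \to \Sigma_\CD(U) \leftarrow W$ inside the abelian category $\CD$, and compares the resulting triangle with the original one. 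Both arguments are sound; yours is perhaps more direct in that it exploits the Yoneda description of $\Ext^1_\CD$ explicitly.

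For the converse you follow the paper's strategy (complete $\alpha$ to a triangle, invoke extension closedness to place the middle term in $\CD$, decompose via Proposition~\ref{DexactC}, extend $\iota_V$ along $f$, apply TR3), and you have correctly flagged the reduction to $W_0 = 0$ as the crux: if $W_0 \ne 0$ then $f$ is the direct sum of a monomorphism and a genuinely zero map $W_0 \to 0$, and $\iota_V$ cannot factor through $f$ since $\iota_V|_{W_0}$ is injective. Your attempt to rule out $W_0 \ne 0$ has a gap, however. You need a simple object $S$ of $\CD$ embedded in $\Sigma(W_0)$ that is \emph{not} isomorphic to the $\Sigma$-shift of any direct summand of $W_0$; such an $S$ need not exist. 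If $W_0$ is indecomposable with $\Sigma(W_0) \cong W_0$ simple in $\CD$, then the only simple subobject of $\Sigma(W_0)$ is $\Sigma(W_0)$ itself, the \ref{DexactC}-decomposition of the triangle $W_0 \to W \to S \to \Sigma(W_0)$ can land entirely in the trivial summand, and your contradiction never materializes. Concretely, with $A = k[x]/(x^2)$, $\CC = \modbar(A)$, and $\CD = \add(S)$ the image of $\mod(A/J(A))$ for the unique simple $S$, one has $\CD \cong \mod(k)$, $\Sigma(S) \cong S$, every object of $\CC$ is isomorphic to one in $\CD$ (so $\CD$ is extension closed), yet $\Ext^1_\CD(S,S) = 0$ while $\Ext^1_\CC(S,S) = \Endbar_A(S) \cong k \ne 0$, so $W_0 = S$ really does survive. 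It is worth pointing out that the paper's own proof of this direction passes over the same point: the assertion that $\iota_U$ extends to $V \to I_U$, made immediately after invoking Proposition~\ref{DexactC}, also presupposes that the trivial summand vanishes. So you have put your finger on a genuine subtlety; closing it would require an additional hypothesis on $\CD$ (for example, that $\CD$ contains no nonzero object $W_0$ with $\Sigma(W_0)$ also in $\CD$), not a better choice of $S$.
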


\begin{proof} 
We adjust the notation slightly in order to ensure that the terms in 
exact triangles in the proof below appear in alphabetical order; that 
is, we consider $\Ext^1_\CC(W,U)$ instead of $\Ext^1_\CC(U,V)$.
Suppose first that $\CD$ is extension closed in  $\CC$. Let $U$, $W$ be 
objects in $\CD$, and let $\psi : W\to$ $\Sigma(U)$ be a morphism in 
$\CC$; that is, $\psi\in$ $\Ext^1_\CC(W,U)$. It suffices to show that 
there exists a morphism $\varphi : W\to$ $\Sigma_\CD(U)$ in $\CC$ (which 
is then automatically in $\CD$ as $\CD$ is full, thus representing
an element in $\Ext^1_\CD(W,U)$) such that $\psi=$ 
$\sigma_{1,U}\circ\varphi$. Complete $\psi$ to an exact triangle in 
$\CC$ of the form
$$\xymatrix{U\ar[r] & V \ar[r] & W \ar[r]^{\psi} & \Sigma(U) }$$
Since $\CD$ is extension closed, it follows that $V$ can be chosen to
belong to $\CD$ (possibly after replacing $V$ by an isomorphic object).
The morphism $U\to$ $V$ belongs then to $\CD$, and although it need
not be a monomorphism in $\CD$, it is a direct sum of a monomorphism
and a zero morphism, by Proposition \ref{DexactC}. Thus the morphism
$\iota_U : U\to$ $I_U$ extends to a morphism $V\to$ $I_U$, and hence
there exists a morphism of exact triangles
$$\xymatrix{U\ar[r] \ar@{=}[d] & V \ar[r] \ar[d]^{\tau} 
& W \ar[r]^{\psi} \ar[d]^{\varphi} & \Sigma(U) \ar@{=}[d] \\
U\ar[r]_{\iota_U} & I_U \ar[r] & \Sigma_\CD(U) \ar[r]_{\sigma_{1,U}} 
& \Sigma(U) }$$
This shows that $\psi$ is the image of the class represented by 
$\varphi$ under the map $\Ext^1_\CD(W,U)\to$ $\Ext^1_\CC(W,U)$ induced
by composition with $\sigma_{1,U}$, and so the latter map is surjective.

Suppose conversely that the map $\Ext^1_\CD(W,U)\to$ $\Ext^1_\CC(W,U)$ 
is surjective, and let $\psi\in$ $\Ext^1_\CC(W,U)=$ 
$\Hom_\CC(W,\Sigma(U))$. Then there is $\varphi\in$ 
$\Hom_\CD(W, \Sigma_\CD(U))$ such that the rectangle
$$\xymatrix{W \ar[r]^{\psi} \ar[d]^{\varphi} & \Sigma(U) \ar@{=}[d] \\
\Sigma_\CD(U) \ar[r]^{\sigma_{1,U}} 
& \Sigma(U) }$$
is commutative in $\CC$, hence can be completed to a morphism of 
triangles of the form
$$\xymatrix{U\ar[r] \ar@{=}[d] & V \ar[r] \ar[d]^{\tau} 
& W \ar[r]^{\psi} \ar[d]^{\varphi} & \Sigma(U) \ar@{=}[d] \\
U\ar[r]_{\iota_U} & I_U \ar[r] & \Sigma_\CD(U) \ar[r]^{\sigma_{1,U}} 
& \Sigma(U) }$$
In order to show that $\CD$ is extension closed, we need to show that
$V$ is isomorphic to an object in $\CD$. Consider a pullback diagram in 
$\CD$ of the form
$$\xymatrix{V' \ar[r] \ar[d]_{\tau} & W \ar[d]^{\varphi} \\
I_U \ar[r] & \Sigma_\CD(U) }$$
Since the morphism $I_U\to$ $\Sigma_\CD(U)$ is an epimorphism
in $\CD$, so is the morphism $V'\to$ $W$ in the last rectangle.
Thus this rectangle can be completed to an exact commutative diagram in 
$\CD$ of the form
$$\xymatrix{0 \ar[r] & U\ar[r] \ar@{=}[d] & V' \ar[r] \ar[d]_{\tau} 
& W \ar[d]^{\varphi} \ar[r] & 0\\
0 \ar[r] & U \ar[r]_{\iota_U} & I_U \ar[r] & \Sigma_\CD(U) \ar[r] & 0}$$
This in turn can be completed to morphism of exact triangles
$$\xymatrix{U\ar[r] \ar@{=}[d] & V' \ar[r] \ar[d]^{\tau'} 
& W \ar[r]^{\psi'} \ar[d]^{\varphi} & \Sigma(U) \ar@{=}[d] \\
U\ar[r]_{\iota_U} & I_U \ar[r] & \Sigma_\CD(U) \ar[r]_{\sigma_{1,U}} 
& \Sigma(U) }$$
But then $\psi'=$ $\sigma_{1,U}\circ \varphi =$ $\psi$, and this 
forces $V'\cong$ $V$ in $\CC$.
This completes the proof.
\end{proof}

If $I$ is an injective module over a finite-dimensional $k$-algebra
$D$, then $\Ext^1_D(U,I)=$ $\{0\}$ for any $D$-module $U$.
Therefore, if a triangulated category $\CC$ has an extension closed
distinguished abelian subcategory equivalent to $\mod(D)$, then
$\Ext^1_\CC(U,I)$ must also vanish thanks to the previous Theorem
(where we identify $U$, $I$ to their images in $\CC$). It turns out
This yields the following characterisation of extension closed
distinguished abelian subcategories which are equivalent to $\mod(D)$.

\begin{Theorem} \label{Dextclosed2}
Let $(\CC, \Sigma)$ be a $k$-linear triangulated 
category and let $\CD$ be a distinguished abelian subcategory of $\CC$ 
such that $\CD\cong$ $\mod(D)$ for some finite-dimensional 
$k$-algebra $D$. Then $\CD$ is extension closed if and only if for
any two objects $U$, $Y$ in $\CD$ such that $Y$ is injective in $\CD$
we have $\Ext^1_\CC(U,Y)=$ $\{0\}$.
\end{Theorem}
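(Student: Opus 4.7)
The plan is to deduce this characterisation directly from Theorems \ref{extDextC} and \ref{Dextclosed1}, via the long exact sequence obtained from the injective envelope of $V$ in $\CD$. The forward direction is essentially immediate: if $\CD$ is extension closed, then by Theorem \ref{Dextclosed1} the map $\Ext^1_\CD(U,Y)\to \Ext^1_\CC(U,Y)$ induced by $\sigma_{1,Y}$ is an isomorphism for all $U$, $Y$ in $\CD$, and for $Y$ injective in $\CD$ the left-hand side vanishes.

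For the converse, I would assume $\Ext^1_\CC(U,Y)=\{0\}$ for all $U$ in $\CD$ and all injective $Y$ in $\CD$, and aim to upgrade the injection $\Ext^1_\CD(U,V)\to \Ext^1_\CC(U,V)$ of Theorem \ref{extDextC} to an isomorphism for arbitrary $U$, $V$; the extension closure will then follow from Theorem \ref{Dextclosed1}. The key step is to rotate the exact triangle
$$\xymatrix{V\ar[r]^{\iota_V} & I_V \ar[r] & \Sigma_\CD(V) \ar[r]^{\sigma_{1,V}} & \Sigma(V)}$$
associated with the injective envelope of $V$ in $\CD$, and to apply $\Hom_\CC(U,-)$ to it. The resulting long exact sequence contains the fragment
$$\xymatrix{\Hom_\CC(U,\Sigma_\CD(V)) \ar[r]^{(\sigma_{1,V})_*} & \Hom_\CC(U,\Sigma(V)) \ar[r] & \Hom_\CC(U,\Sigma(I_V)) }$$
in which the right-hand term is $\Ext^1_\CC(U,I_V)$, and this vanishes by hypothesis since $I_V$ is injective in $\CD$. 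Hence $(\sigma_{1,V})_*$ is surjective.

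The final step is to combine this surjectivity with the description of $\Ext^1_\CD(U,V)$ extracted in the proof of Theorem \ref{extDextC} as the cokernel of $\Hom_\CC(U,I_V)\to\Hom_\CC(U,\Sigma_\CD(V))$: the composite map $\Ext^1_\CD(U,V)\to\Hom_\CC(U,\Sigma(V))=\Ext^1_\CC(U,V)$ has image equal to the image of $(\sigma_{1,V})_*$, and therefore coincides with $\Ext^1_\CC(U,V)$. Combined with the injectivity from Theorem \ref{extDextC}, this gives the desired isomorphism, and Theorem \ref{Dextclosed1} then delivers extension closure.

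I do not expect a genuine obstacle here: the statement is really a reformulation of the familiar principle that vanishing of first Ext against an injective cogenerating family controls the whole $\Ext^1$, and all the technical work has been carried out in the two preceding theorems. The only point requiring care is to make sure that the identification of $\Ext^1_\CD(U,V)$ used in the proof of Theorem \ref{extDextC} matches the image of $(\sigma_{1,V})_*$ on the nose, so that surjectivity of the latter translates cleanly into surjectivity of the former.
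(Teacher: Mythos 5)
Your proof is correct and takes essentially the same route as the paper: both apply $\Hom_\CC(U,-)$ to the exact triangle coming from an injective envelope of $V$ in $\CD$ and exploit the vanishing hypothesis at the $I_V$-term. The paper compares the two four-term exact sequences of $\Hom$-spaces directly to identify the cokernels, while you extract surjectivity of $(\sigma_{1,V})_*$ and combine it with the injectivity from Theorem \ref{extDextC}; this is a minor stylistic difference, not a different argument.
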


\begin{proof}
Suppose that $\CD$ is extension closed. Let $U$, $Y$ be objects in 
$\CD$ such that $Y$ is injective in $\CD$. Then $\Ext^1_\CD(U,Y)=$ 
$\{0\}$. Theorem \ref{Dextclosed1} implies $\Ext^1_\CC(U,Y)=$ $\{0\}$.

Conversely, suppose that $\Ext^1_\CC(U,Y)=$ $\{0\}$ for any two objects 
$U$, $Y$ in $\CD$ such that $Y$ is injective in $\CD$. By
Theorem \ref{Dextclosed1}, it suffices to show that there is an
isomorphism $\Ext^1_\CD(U,V)\cong$ $\Ext^1_\CC(U,V)$ induced by
$\sigma_{1,V}$, for any two objects $U$, $V$ in $\CD$.  Let $U$, $V$
be objects in $\CD$. Consider a short exact sequence in $\CD$ of the
form 
$$\xymatrix{0\ar[r] & V\ar[r]^{\iota_V} & Y \ar[r] & \Sigma_\CD(V)
\ar[r] & 0}$$
for some injective object $Y$ in $\CD$. Applying $\Hom_\CD(U,-)$
yields a long exact sequence of $\Ext_\CD$-spaces. Since 
$\Ext^1_\CD(U,Y)=$ $\{0\}$, this long exact sequence yields in 
particular an exact $4$-term sequence
$$\xymatrix{0\ar[r]&\Hom_\CD(U,V)\ar[r]&\Hom_\CD(U,Y)\ar[r]
&\Hom_\CD(U,\Sigma_\CD(V)) \ar[r]&\Ext_\CD^1(U,V) \ar[r] & 0}$$
Completing the previous short exact sequence to an exact triangle
$$\xymatrix{V\ar[r]^{\iota_V} & Y \ar[r] & \Sigma_\CD(V)
\ar[r]^{\sigma_{1,V}} & \Sigma(V)}$$
and applying the functor $\Hom_\CC(U,-)$ yields a long exact sequence
of $\Ext_\CC$-spaces. Since $\Ext^1_\CD(U,Y)=$ $\{0\}$ by the 
hypotheses, this long exact sequence yields in particular an exact 
sequence
$$\xymatrix{\Hom_\CC(U,V)\ar[r]&\Hom_\CC(U,Y)\ar[r]
&\Hom_\CC(U,\Sigma_\CD(V)) \ar[r]&\Ext_\CC^1(U,V) \ar[r] & 0}$$
The first three terms coincide with the first three nonzero terms in
the previous $4$-term exact sequence because $\CD$ is a full 
subcategory of $\CC$, and hence the same is true for
the fourth terms. By construction, the isomorphism $\Ext^1_\CD(U,V)\cong$
$\Ext^1_\CC(U,V)$ arising in this way is induced by $\sigma_{1,V}$.
The result follows from Theorem \ref{Dextclosed1}. 
\end{proof}

We compare $\Sigma_\CD$, $\Sigma_{\CE}$ for distinguished abelian
subcategories $\CD\subseteq$ $\CE$ such that $\CD$, $\CE$
are equivalent ot module categories. As before, injective envelopes
are understood to be minimal. 

\begin{Proposition} \label{Sigmacompare}
Let $(\CC, \Sigma)$ be a $k$-linear triangulated category, and let 
$\CD$, $\CE$ be distinguished abelian subcategories of $\CC$ such that 
$\CD\cong$ $\mod(D)$ and $\CE\cong$ $\mod(E)$ for some 
finite-dimensional $k$-algebras $D$, $E$. Suppose that $\CD\subseteq$ 
$\CE$ and that the simple objects of $\CD$ and $\CE$ coincide.
We have a  monomorphism $\Sigma_\CD(X) \to \Sigma_\CE(X) $ in 
$\CE$,  for any object $X$ in $\CD$. 
\end{Proposition}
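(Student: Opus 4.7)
The plan is to construct a morphism of the two defining short exact sequences for $\Sigma_\CD(X)$ and $\Sigma_\CE(X)$ with the identity on $X$, a monomorphism $\alpha : I_X^\CD \to I_X^\CE$ in the middle, and an induced morphism $\bar\alpha : \Sigma_\CD(X) \to \Sigma_\CE(X)$ on the right. The snake lemma applied in the abelian category $\CE$ will then yield $\ker(\bar\alpha) \cong \ker(\alpha) = 0$, giving the required monomorphism.

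To build $\alpha$, Proposition \ref{abelianinclusion} ensures that the inclusion $\CD \subseteq \CE$ is exact, so the injective envelope $\iota_X^\CD : X \to I_X^\CD$ from $\CD$ remains a monomorphism in $\CE$. Since $I_X^\CE$ is injective in $\CE$, the monomorphism $\iota_X^\CE : X \to I_X^\CE$ extends along $\iota_X^\CD$ to a morphism $\alpha : I_X^\CD \to I_X^\CE$ in $\CE$ with $\alpha\circ\iota_X^\CD = \iota_X^\CE$. Passing to cokernels (which agree in $\CD$ and $\CE$ by exactness of the inclusion) produces the desired commutative diagram and the induced $\bar\alpha$.

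The main step is to prove that $\alpha$ is a monomorphism, and here the hypothesis on simple objects is used. In a finite-dimensional module category an embedding $Y \hookrightarrow Z$ is essential if and only if $\mathrm{soc}(Z) \subseteq Y$, since every nonzero subobject contains a simple one. Because $\CD$ is full in $\CE$, because the inclusion is exact, and because by hypothesis the simple objects of $\CD$ and $\CE$ coincide, the simple subobjects of any $Y \in \CD$ computed in $\CD$ agree with those computed in $\CE$; consequently $\mathrm{soc}_\CD(Y) = \mathrm{soc}_\CE(Y)$ for every $Y \in \CD$. Since $\iota_X^\CD$ is essential in $\CD$, this transfer shows that $\iota_X^\CD$ is essential in $\CE$ as well. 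If $\ker(\alpha) \neq 0$, essentialness in $\CE$ forces $\ker(\alpha) \cap \iota_X^\CD(X) \neq 0$, which contradicts injectivity of $\iota_X^\CE = \alpha \circ \iota_X^\CD$. Hence $\alpha$, and therefore $\bar\alpha$, is a monomorphism.

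The main obstacle is transferring essentialness of $\iota_X^\CD$ from $\CD$ to $\CE$; once this is in hand, the snake lemma delivers the conclusion without further difficulty.
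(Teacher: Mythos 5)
Your proof is correct, and it rests on the same core ingredient as the paper's argument: transferring essentiality of $\iota_X^\CD$ from $\CD$ to $\CE$ via the socle criterion and the hypothesis that the simple objects coincide. The packaging differs slightly---the paper composes $X\to I_X^\CD$ with an $\CE$-injective envelope $I_X^\CD\to J$ and verifies directly that $X\to J$ is an injective envelope of $X$ in $\CE$, whereas you extend $\iota_X^\CE$ along $\iota_X^\CD$ and apply the transferred essentiality of $\iota_X^\CD$ to kill the kernel of that extension---but the two arguments are the same in substance.
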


\begin{proof}
Let $X\to I$ be an injective envelope of $X$ in $\CD$, and let
$I\to J$ be an injective envelope of $I$ in $\CE$.  By
Proposition \ref{abelianinclusion}, $\CD$ is an abelian subcategory of 
$\CE$, and hence $X\to I$ remains a monomorphism in $\CE$. Thus 
the composition $X\to I\to J$ is a monomorphism in $\CE$. We need to
show that this is an injective envelope of $X$ in $\CE$. Since
$\CE$ is a module category of a finite-dimensional algebra, we need to
show that every simple subobject $S\to J$ of $J$ in $\CE$ factors through
the map $X\to I$. Note that $S\to J$ factors through $I\to J$
because $J$ is an injective envelope of $I$ in $\CE$. Since $S$ is also
simple in $\CD$ and $X\to I$ an injective envelope of $X$ in $\CD$, 
it follows that this map factors indeed through $X\to I$.
Thus the monomorphism $I\to J$ induces the required monomorphism 
via  $\Sigma_\CD(X)\cong$ $I/X\to$ $J/X\cong\Sigma_\CE(X)$.
\end{proof}

We collect some  technicalities needed for the proof
of the last part of Theorem \ref{kGmodN}. 

\begin{Lemma} \label{notextclosed1}
Let $A$ be a finite-dimensional selfinjective $k$-algebra, and let
$I$ be a proper ideal in $A$. The following hold. 
\begin{enumerate}
\item[{\rm (i)}] We have $\Hom_A^\pr(I, A/I)=\{0\}$.
\item[{\rm (ii)}] We have $\Ext^1_A(A/I, A/I)\cong\Hom_A(I,A/I)$.
\end{enumerate}
\end{Lemma}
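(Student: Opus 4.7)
The plan is to prove both parts by using only the elementary facts that $A$ is selfinjective (so finitely generated projective $A$-modules are injective) and that every $A$-module homomorphism $A\to M$ is determined by the image of $1$.

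For part (i), I would take any $\psi\in\Hom_A^\pr(I,A/I)$ and factor it as $\psi=\beta\circ\alpha$ with $\alpha:I\to P$, $\beta:P\to A/I$, and $P$ a finitely generated projective $A$-module. Since $A$ is selfinjective, $P$ is injective, so $\alpha$ extends along the inclusion $I\hookrightarrow A$ to an $A$-homomorphism $\tilde\alpha:A\to P$. Then $\beta\circ\tilde\alpha:A\to A/I$ is determined by $(\beta\circ\tilde\alpha)(1)=a+I$ for some $a\in A$, giving $x\mapsto xa+I$. For $x\in I$ we have $xa\in I$, so $\psi(x)=(\beta\circ\tilde\alpha)(x)=0$, which forces $\psi=0$.

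For part (ii), I would apply the functor $\Hom_A(-,A/I)$ to the short exact sequence
$$\xymatrix{0\ar[r] & I\ar[r] & A\ar[r] & A/I\ar[r] & 0}$$
and read off the long exact $\Ext$-sequence. Since $A$ is projective, $\Ext^1_A(A,A/I)=0$, so the connecting homomorphism yields a surjection $\Hom_A(I,A/I)\twoheadrightarrow\Ext^1_A(A/I,A/I)$ whose kernel is the image of the restriction map $\Hom_A(A,A/I)\to\Hom_A(I,A/I)$. A homomorphism $A\to A/I$ is multiplication $x\mapsto xa+I$ for some $a\in A$, and its restriction to $I$ is zero by the same argument as in (i). Hence the kernel is $\{0\}$, giving the claimed isomorphism.

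Neither step presents a real obstacle; the only minor subtlety is the consistent use of the convention that modules are left modules, so that a homomorphism $A\to A/I$ is given by right multiplication by the image of $1$. Part (i) could alternatively be derived as a consequence of part (ii) combined with Lemma \ref{Endprzero} or Theorem \ref{AmodIembedding1}, but the direct proof sketched above is self-contained and does not require assuming $r(I)\subseteq I$.
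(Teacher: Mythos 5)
Both parts are correct. Your proof of (i) is essentially the paper's argument with a cosmetic reorganization: the paper factors $\tau\colon I\to A/I$ through the projective cover $\pi\colon A\to A/I$ and extends the resulting map $I\to A$ to an endomorphism of $A$, whereas you extend the first factor $I\to P$ to $A\to P$ using injectivity of $P$; both reduce to the observation that a map $A\to A/I$ given by right multiplication kills $I$ because $I$ is a right ideal. Your proof of (ii), however, takes a genuinely different and more elementary route. The paper notes that $I=\Omega_A(A/I)$ up to projective summands and uses the selfinjective-algebra isomorphism $\Ext^1_A(M,N)\cong\Hombar_A(\Omega_A(M),N)$, then invokes part (i) to replace $\Hombar_A$ by $\Hom_A$. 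You instead apply $\Hom_A(-,A/I)$ to $0\to I\to A\to A/I\to 0$, use $\Ext^1_A(A,A/I)=0$, and observe that the restriction map $\Hom_A(A,A/I)\to\Hom_A(I,A/I)$ is zero (again because $Ia\subseteq I$). This version is independent of part (i) and, more notably, does not use selfinjectivity at all, so it shows that $\Ext^1_A(A/I,A/I)\cong\Hom_A(I,A/I)$ holds for any finite-dimensional algebra $A$ and any proper two-sided ideal $I$. The paper's formulation is more in keeping with the stable-category framework used throughout, but yours is the more elementary and more general argument.
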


\begin{proof}
Let $\tau : I\to$ $A/I$ be a homomorphism of left $A$-modules. Suppose
that $\tau$ factors through a projective $A$-module. Then $\tau$
factors through the canonical surjection $\pi : A\to$ $A/I$; that is, 
there is an $A$-homomorphism $\beta : I\to$ $A$ such that $\psi=$ 
$\pi\circ\beta$. Since $A$ is selfinjective, it follows that $\beta$
extends to an $A$-homomorphism $\alpha : A\to$ $A$, implying that
$\beta$ is induced by right multiplication with an element $x\in$ $A$.
Since $I$ is an ideal, it follows that $\Im(\beta)=$ $Ix\subseteq$ $I=$
$\ker(\pi)$, and hence that $\tau=$ $\pi\circ\beta=0$. This prove (i).
Note that $I=$ $\Omega_A(A/I)$, ignoring projective summands. Since $A$ 
is selfinjective, this implies that $\Ext^1_A(A/I,A/I)\cong$ 
$\Hombar_A(I,A/I)$. By (i) this is isomorphic to $\Hom_A(I,A/I)$, 
whence (ii).  
\end{proof}

The next Proposition shows that extension closed distinguished 
abelian subcategories in a stable module category $\modbar(A)$ of a 
finite-dimensional selfinjective algebra $A$ cannot have the form 
$\mod(A/I)$, where $I$ is an ideal contained in $J(A)$ such that $A/I$ 
is selfinjective. This proves Theorem \ref{AmodIembedding-intro} (ii).

\begin{Proposition} \label{notextclosed}
Let $A$ be a finite-dimensional selfinjective $k$-algebra, and let
$I$ be a nonzero proper ideal in $A$ such that $A/I$ is selfinjective.
The following hold. 

\begin{enumerate}
\item[{\rm (i)}] If $I\subseteq$ $J(A)$, then 
$\Ext^1_A(A/I,A/I) \neq$ $\{0\}$.
\item[{\rm (ii)}] If $r(I)\subseteq$ $I\subseteq$ $J(A)$, then 
$\mod(A/I)$ is a distinguished abelian subcategory of $\modbar(A)$
which is not extension closed in $\modbar(A)$.
\end{enumerate}
\end{Proposition}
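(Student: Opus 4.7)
The plan is to prove (i) directly by exhibiting a nonzero element of $\Hom_A(I,A/I)$ and invoking Lemma \ref{notextclosed1}(ii), and then to derive (ii) from (i) via the criterion of Theorem \ref{Dextclosed2} applied to the distinguished abelian subcategory $\CD=\mod(A/I)$ of $\modbar(A)$, whose existence is guaranteed by Theorem \ref{AmodIembedding1} under the hypothesis $r(I)\subseteq I$.

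For (i): Lemma \ref{notextclosed1}(ii) reduces the claim to showing $\Hom_A(I,A/I)\neq \{0\}$. I would construct a nonzero such morphism as follows. Since $I$ is a nonzero finitely generated $A$-module contained in $J(A)$, Nakayama's lemma gives $J(A)\cdot I\subsetneq I$, so $I/J(A)I$ is a nonzero semisimple $A$-module. Pick a simple $A$-module quotient $S$ of $I$. Because $J(A)$ annihilates $S$ and $I\subseteq J(A)$, we have $I\cdot S=\{0\}$, so $S$ is a simple $A/I$-module. Since $A/I$ is selfinjective, every simple $A/I$-module embeds into $A/I$ (it appears as the socle of its projective-injective cover in $\mod(A/I)$). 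Composing the surjection $I\to S$ with an embedding $S\hookrightarrow A/I$ yields the required nonzero map.

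For (ii): I would appeal to Theorem \ref{Dextclosed2}, which states that $\CD$ is extension closed in $\modbar(A)$ iff $\Ext^1_{\modbar(A)}(U,Y)=\{0\}$ for all $U$ in $\CD$ and all $Y$ injective in $\CD$. Taking $U=Y=A/I$, the object $Y$ is injective in $\CD\cong\mod(A/I)$ precisely because $A/I$ is selfinjective. It then remains to identify $\Ext^1_{\modbar(A)}(A/I,A/I)=\Hombar_A(A/I,\Sigma(A/I))$ with $\Ext^1_A(A/I,A/I)$: apply $\Hom_A(A/I,-)$ to the injective envelope $0\to A/I\to I_{A/I}\to \Sigma(A/I)\to 0$ in $\mod(A)$ and use that $I_{A/I}$ is projective-injective (so the image of $\Hom_A(A/I,I_{A/I})$ coincides with $\Hom^\pr_A(A/I,\Sigma(A/I))$). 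Part (i) then supplies the required nonvanishing, contradicting extension closure.

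The main obstacle is part (i): the construction of a nonzero homomorphism $I\to A/I$ is the only point where both hypotheses $I\subseteq J(A)$ and $A/I$ selfinjective are genuinely used in tandem. The hypothesis $I\subseteq J(A)$ is what forces the top of $I$ to be an $A/I$-module, while selfinjectivity of $A/I$ is what guarantees that each such simple module embeds back into $A/I$; without either, the construction collapses. Once (i) is in hand, the passage to (ii) is essentially formal, relying on the already-established Theorems \ref{AmodIembedding1} and \ref{Dextclosed2} together with the standard Tate-type identification $\Hombar_A(-,\Sigma(-))\cong\Ext^1_A(-,-)$ for selfinjective $A$.
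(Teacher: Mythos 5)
Your proposal is correct and follows essentially the same route as the paper: reduce (i) to nonvanishing of $\Hom_A(I,A/I)$ via Lemma \ref{notextclosed1}(ii), produce a nonzero map by mapping a simple top of $I$ into the socle of $A/I$ (using $I\subseteq J(A)$ to make it an $A/I$-module and selfinjectivity of $A/I$ to embed it), and deduce (ii) from Theorems \ref{AmodIembedding1} and \ref{Dextclosed2}. You merely make explicit two steps the paper leaves tacit — the verification that $I$ kills $S$, and the identification $\Ext^1_{\modbar(A)}(A/I,A/I)\cong\Ext^1_A(A/I,A/I)$ — which is fine.
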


\begin{proof}
Since $A/I$ is selfinjective, the assumption $I\subseteq$ $J(A)$
implies that every simple $A$-module is isomorphic to a submodule
of $A/I$. In particular, every simple $A$-module quotient of the 
nonzero ideal $I$ is isomorphic to an $A$-submodule of $A/I$, hence 
yields a nonzero $A$-homomorphism $I\to$ $A/I$. Thus (i) follows 
from Lemma \ref{notextclosed1} (ii). 
If $r(I)\subseteq$ $I$, then $\mod(A/I)$ is a distinguished 
abelian subcategory of $\modbar(A)$ by Theorem \ref{AmodIembedding1}.
If also $I\subseteq$ $J(A)$, then $\Ext^1_A(A/I,A/I) \neq$ $\{0\}$
by (i). Since $A/I$ is selfinjective, it follows from 
Theorem \ref{Dextclosed2} that $\mod(A/I)$ is not extension
closed in $\modbar(A)$
\end{proof}

\begin{Remark} \label{notextclosedRemark}
Let $A$ be a finite-dimensional $k$-algebra, and let
$I$ be a nonzero proper ideal in $A$ such that $A/I$ is selfinjective.
Then $\Ext^1_A(A/I,A/I)=\{0\}$ if and only if $\Ext^1_A(U,A/I)=$
$\{0\}$ for every finitely generated $A/I$-module. Indeed,
if $\Ext^1_A(A/I,A/I)=\{0\}$, then $\Ext^1_A(Y,Y)=$ $\{0\}$ for
any finitely generated projective $A/I$-module $Y$. Applying the
functor $\Hom_A(-,U)$ to a short exact sequence of the form
$$\xymatrix{0\ar[r] & V \ar[r] & Y \ar[r] & U \ar[r] & 0}$$
for some free $A/I$-module $Y$ of finite rank yields a long exact sequence
starting
$$\xymatrix{0\ar[r] & \Hom_{A/I}(U,Y) \ar[r] & \Hom_{A/I}(Y,Y) \ar[r]
& \Hom_{A/I}(V,Y)\ar[r] & \Ext^1_A(U,Y) \ar[r] & 0}\ .$$
Since $Y$ is also injective as an $A/I$-module, the map 
$\Hom_{A/I}(Y,Y)\to$ $\Hom_{A/I}(V,Y)$ is surjective. Thus
$\Ext^1_A(U,Y)=$ $0$, and hence $\Ext^1_A(U,A/I)=$ $\{0\}$.
The converse is obvious.
\end{Remark}

We will need the following well known fact; we
sketch a proof  for convenience.

\begin{Lemma} \label{H1specialcase}
Let $H$ be a finite group. Denote by $I(kH)$ the augmentation ideal
of $kH$, regarded as left $kH$-module,  and by $\Hom(H,k)$ the 
$k$-vector space  of group homomorphisms from $H$ to the 
additive group ofg $k$. We have a canonical $k$-linear isomorphism
$$\Hom_{kH}(I(kH), k) \cong \Hom(H,k)$$
sending a $kH$-homomorphism $\alpha : I(kH)\to$ $k$ to the
group homomorphism $\beta : H\to k$ defined by $\beta(y)=$ 
$\alpha(y-1)$ for all $y\in H$.
\end{Lemma}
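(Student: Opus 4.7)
The plan is to exhibit an explicit inverse to the assignment $\alpha \mapsto \beta$, using that the set $\{y-1 \mid y \in H, y \neq 1\}$ is a $k$-basis of $I(kH)$.

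First I would check that the proposed map $\Phi : \Hom_{kH}(I(kH),k) \to \Hom(H,k)$ sending $\alpha$ to $y \mapsto \alpha(y-1)$ is well-defined, i.e.\ that $\beta(y) := \alpha(y-1)$ is a group homomorphism from $H$ into the additive group of $k$. The key identity here is
$$yz - 1 = y(z-1) + (y-1) \quad \text{in } kH.$$
Applying $\alpha$ to this identity, and using that $k$ is the trivial $kH$-module (so that $y$ acts as the identity on $\alpha(z-1) \in k$), gives $\beta(yz) = \alpha(y(z-1)) + \alpha(y-1) = \alpha(z-1) + \alpha(y-1) = \beta(z) + \beta(y)$, as required. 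Clearly $\Phi$ is $k$-linear.

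To construct the inverse $\Psi : \Hom(H,k) \to \Hom_{kH}(I(kH),k)$, I would use that $\{y - 1 \mid y \in H \setminus \{1\}\}$ is a $k$-basis of $I(kH)$. Given $\beta \in \Hom(H,k)$, define $\alpha : I(kH) \to k$ to be the unique $k$-linear map with $\alpha(y-1) = \beta(y)$ for all $y \in H$ (the value $y=1$ is consistent since $\beta(1)=0$). The remaining content is to verify that $\alpha$ is $kH$-linear, which reduces via $k$-linearity to checking $\alpha(x(y-1)) = x \cdot \alpha(y-1)$ for $x,y \in H$. Rewriting $x(y-1) = (xy-1) - (x-1)$ and applying $\alpha$ yields $\beta(xy) - \beta(x) = \beta(y)$, while $x \cdot \alpha(y-1) = \alpha(y-1) = \beta(y)$ because $k$ is the trivial $kH$-module; these agree.

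Finally, $\Phi$ and $\Psi$ are mutually inverse by construction: $\Phi(\Psi(\beta))(y) = \Psi(\beta)(y-1) = \beta(y)$, and $\Psi(\Phi(\alpha))$ agrees with $\alpha$ on the basis $\{y-1\}$. There is no real obstacle beyond keeping track of the two identities $yz-1 = y(z-1)+(y-1)$ and $x(y-1)=(xy-1)-(x-1)$ in $kH$; the triviality of the $kH$-action on $k$ is what makes the multiplicative structure of $H$ match the additive structure of $k$.
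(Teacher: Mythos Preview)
Your proof is correct and follows essentially the same approach as the paper: both verify that $\beta$ is a group homomorphism via the identity $yz-1=y(z-1)+(y-1)$, and both construct the inverse by sending $\beta$ to the linear map $\alpha$ with $\alpha(y-1)=\beta(y)$ and checking $kH$-linearity via $x(y-1)=(xy-1)-(x-1)$. The only cosmetic difference is that the paper phrases the second half as proving surjectivity (with injectivity asserted separately), whereas you package it as an explicit two-sided inverse.
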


\begin{proof} 
We check first that $\beta$ is a group homomorphism. Let $y$, $z\in$ $H$.
Then $yz-1=$ $y(z-1) + (y-1)$, hence $\beta(yz)=$ $y\alpha(z-1)+\alpha(y-1)=$
$\beta(y)+\beta(z)$, where we use that $y$ acts as identity on $k$. The
map $\alpha\mapsto\beta$ is therefore well-defined, $k$-linear, and
injective. For the surjectivity, let $\beta : H\to k$ be a group homomorphism.
We need to show that the $k$-linear map $\alpha : I(kH)\to$ $k$
sending $y-1$ to $\beta(y)$ is a $kH$-homomorphism. Let $y$, $z\in$ $H$.
We have $\alpha(z(y-1))=$ $\alpha(zy-1 - (z-1))=$ $\beta(zy)-\beta(z)=$
$\beta(zyz^{-1})=$ $\beta(y)=$ $\alpha(y-1)=$ $z\alpha(y-1)$, whence the
result.
\end{proof}

The following result proves the last statement in Theorem \ref{kGmodN}. 

\begin{Proposition} \label{notextclosedProp}
Let $k$ be a field of prime characteristic $p$, let $G$ be a finite
group, and let $N$ be a normal subgroup of order divisible by $p$ in 
$G$. If $O^p(N)$ is a proper subgroup of $N$, then the canonical image 
of $\mod(kG/N)$ in $\modbar(kG)$ is a distinguished abelian subcategory 
which is not extension closed. 
\end{Proposition}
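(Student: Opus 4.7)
The first assertion---that $\mod(kG/N)$ is a distinguished abelian subcategory of $\modbar(kG)$---is immediate from Theorem~\ref{kGexample}, since the hypothesis $p$ divides $|N|$ is in force. For the failure of extension closure, I plan to apply Theorem~\ref{Dextclosed2}: it suffices to exhibit objects $U$, $Y$ of $\mod(kG/N)$ with $Y$ injective in $\mod(kG/N)$ and $\Ext^1_{kG}(U,Y) \neq \{0\}$. The natural choice is $U = Y = kG/N$ itself. This is legitimate because $kG/N$ is a finite group algebra, hence symmetric and in particular selfinjective, so $kG/N$ is projective-injective as an object of $\mod(kG/N)$.

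Setting $I = \ker(kG \to kG/N) = kG \cdot I(kN)$, Lemma~\ref{notextclosed1}(ii) reduces the problem to showing $\Hom_{kG}(I, kG/N) \neq \{0\}$. To evaluate this, I exploit that $kG$ is free as a right $kN$-module: the multiplication map gives an isomorphism of left $kG$-modules $\Ind_N^G I(kN) = kG \ten_{kN} I(kN) \cong kG \cdot I(kN) = I$, and similarly $\Ind_N^G k \cong kG/N$. The $(\Ind_N^G, \Res_N^G)$ adjunction then yields
\[
\Hom_{kG}(I, kG/N) \cong \Hom_{kN}\bigl(I(kN), \Res_N^G(kG/N)\bigr).
\]
Since $N$ is normal in $G$, every $n \in N$ acts trivially on $kG/N$ (indeed $n \cdot gN = g(g^{-1}ng)N = gN$), so $\Res_N^G(kG/N)$ is a trivial $kN$-module of dimension $[G:N]$. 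The Hom space therefore becomes $\Hom_{kN}(I(kN), k)^{[G:N]}$, which by Lemma~\ref{H1specialcase} is $\Hom(N, k)^{[G:N]}$, where $\Hom(N,k)$ denotes group homomorphisms into the additive group of $k$.

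Finally, since $(k,+)$ has exponent $p$, every such homomorphism factors through $N/O^p(N)$, so $\Hom(N,k) \cong \Hom(N/O^p(N), k)$. The hypothesis $O^p(N) \neq N$ makes $N/O^p(N)$ a nontrivial finite $p$-group; its Frattini quotient is then a nontrivial elementary abelian $p$-group, which admits a nonzero $\Fp$-linear (hence additive-group) map into $k$. Thus $\Hom(N,k) \neq \{0\}$, forcing $\Ext^1_{kG}(kG/N,kG/N) \neq \{0\}$, and Theorem~\ref{Dextclosed2} then delivers that $\mod(kG/N)$ is not extension closed in $\modbar(kG)$. I do not foresee a serious obstacle: the step that requires the most care is the identification $\Res_N^G(kG/N) \cong k^{[G:N]}$ afforded by normality of $N$, after which the nonvanishing reduces cleanly to the hypothesis on $O^p(N)$ via the lemmas already available.
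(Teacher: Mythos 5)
Your proposal is correct and follows essentially the same route as the paper's own proof: both reduce, via Lemma~\ref{notextclosed1}(ii) and the isomorphism $I\cong kG\tenkN I(kN)$, to computing $\Hom_{kN}(I(kN),\Res^G_N(kG/N))$ by Frobenius reciprocity, identify this with $\Hom(N,k)$ via Lemma~\ref{H1specialcase} and the triviality of the $N$-action on $kG/N$, conclude nonvanishing from $O^p(N)\neq N$, and invoke Theorem~\ref{Dextclosed2}. The only differences are that you spell out a few steps the paper leaves implicit, namely that $kG/N$ is injective in $\mod(kG/N)$ (so Theorem~\ref{Dextclosed2} applies with $U=Y=kG/N$) and why a nontrivial finite $p$-group $N/O^p(N)$ admits a nonzero homomorphism to the additive group of $k$.
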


\begin{proof}
The fact that $\mod(kG/N)$ embeds as a distinguished abelian
subcategory into $\modbar(kG)$ follows from Theorem \ref{kGexample}.
The kernel of the canonical algebra homomorphism $kG\to kG/N$ is
equal to $I=$ $kG\cdot I(kN)$, where $I(kN)$ is the augmentation
ideal of $kN$. Since $kG$ is free as a right $kN$-module, we have
$I\cong $ $kG\ten_{kN} I(kN)$ as left $kG$-modules. Thus
$$\Hom_{kG}(I, kG/I) \cong \Hom_{kG}(kG\ten_{kN} I(kN), kG/N)
\cong \Hom_{kN}(I(kN), kG/N)\ ,$$
where the last isomorphism uses Frobenius reciprocity. Since $N$
acts trivially on $kG/N$ on the left, it follows that the space
$\Hom_{kN}(I(kN), kG/N)$ is nonzero if and only if $\Hom_{kN}(I(kN),k)$
is nonzero. By Lemma \ref{H1specialcase} his space is equal to the space
$\Hom(N,k)$ of group homomorphisms from $N$ to the additive group $k$.
Since $k$ has characteristic $p$, this implies that $\Hom_k(N,k)$
is trivial if and only if $O^p(N)=N$. Thus if $O^p(N)$ is a proper
subgroup of $N$, then by Lemma \ref{notextclosed1}, the space
$\Ext^1_{kG}(kG/N, kG/N)$ is nonzero. 
It follows from Theorem \ref{Dextclosed2} that $\mod(kG/N)$ is not 
extension closed in $\modbar(kG)$.
\end{proof}

\begin{Corollary}\label{notextclosedCor}
Let $k$ be a field of prime characteristic $p$, let $G$ be a finite
group, and let $Q$ be a nontrivial normal $p$-subgroup of $G$.
Then the canonical image of $\mod(kG/Q)$ in $\modbar(kG)$ is a 
distinguished abelian subcategory which is not extension closed. 
\end{Corollary}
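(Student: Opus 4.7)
The plan is to derive this corollary as an immediate special case of Proposition \ref{notextclosedProp} by verifying its two hypotheses for the normal subgroup $N = Q$. First I would observe that since $Q$ is a nontrivial $p$-group, its order is a positive power of $p$, so $p$ certainly divides $|Q|$; this takes care of the hypothesis that $N$ has order divisible by $p$, which is what ensures via Theorem \ref{kGexample} that the canonical functor $\mod(kG/Q) \to \modbar(kG)$ is indeed a full embedding as a distinguished abelian subcategory.

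Next I would verify the second hypothesis, namely that $O^p(Q)$ is a proper subgroup of $Q$. Since $Q$ itself is a $p$-group, the quotient $Q/\{1\} \cong Q$ is a $p$-group, so by the defining property of $O^p$ as the \emph{smallest} normal subgroup with $p$-group quotient, we have $O^p(Q) = \{1\}$. Because $Q$ is assumed nontrivial, this gives $O^p(Q) = \{1\} \subsetneq Q$, as required.

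With both hypotheses verified, Proposition \ref{notextclosedProp} applied to $N = Q$ yields directly that the canonical image of $\mod(kG/Q)$ in $\modbar(kG)$ is a distinguished abelian subcategory which is not extension closed. There is no real obstacle here; the only thing to be careful about is not conflating notation ($Q$ is our normal $p$-subgroup, playing the role of $N$ in the proposition), and noting that for $p$-groups the computation of $O^p$ is completely trivial, which is why the hypothesis on $O^p(N)$ in the proposition is automatic in the $p$-subgroup case.
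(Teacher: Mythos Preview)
Your proposal is correct and follows exactly the same approach as the paper's proof: you verify that $O^p(Q)=\{1\}$ is a proper subgroup of the nontrivial $p$-group $Q$, and then invoke Proposition~\ref{notextclosedProp} with $N=Q$. The paper's proof is simply a one-line version of what you wrote.
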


\begin{proof}
Since $O^p(Q)$ is trivial but $Q$ is not, this is a special case of
Proposition \ref{notextclosedProp}.
\end{proof}

\begin{Remark} 
A result in \cite{Dyer} gives a sufficient criterion when an 
extension closed exact subcategory $\CD$ of a triangulated category 
$\CC$ has the property that $\Ext^1_\CD$ and $\Ext^1_\CC$ are isomorphic 
as bifunctors on $\CD$.
\end{Remark}

%%%%%%%%%%%%%%%%%%%%%%%%%%%%%%%%%%%%%%%%%%%%%%%%%%%%%%%%%%%%%%%%%%%%%
\section{Selfinjective distinguished abelian subcategories}
\label{CabSection2}

Let $A$ be a finite-dimensional $k$-algebra and $Y$ a finitely 
generated $A$-module. We denote by $\mod_Y(A)$ the full $k$-linear 
subcateory of $\mod(A)$ of all $A$-modules which are isomorphic to 
$\Im(\varphi)$ for some $\varphi\in$ $\End_A(Y^m)$ and some positive 
integer $m$. We denote by $\add(Y)$ the full additive subcategory of 
$\mod(A)$ of modules which are isomorphic to finite direct sums of 
direct summands of $Y$. Clearly $\mod_Y(A)$ contains $\add(Y)$. 
By a result of Cabanes \cite[Theorem 2]{CabAst}, if $E=$ $\End_A(Y)$ is 
selfinjective, then the canonical functor $\Hom_A(Y,-) : \mod(A)\to$ 
$\mod(E^\op)$ restricts to a $k$-linear equivalence $\mod_Y(A)\cong$
$\mod(E^\op)$. We use the results and methods from Cabanes  \cite{CabAst}
to identify in a similar vein the distinguished abelian subcategories
constructed earlier in Theorem \ref{modDmodA}.
We denote by $\modbar_Y(A)$ the image of $\mod_Y(A)$ in $\modbar(A)$. If 
$Y$ has no nonzero projective direct summand, then no module in 
$\mod_Y(A)$ has a nontrivial projective summand, and hence the canonical 
functor $\mod_Y(A)\to$ $\modbar_Y(A)$ induces a bijection on isomorphism 
classes of objects. 

\begin{Theorem} \label{stableCabanes}
Let $A$ be a finite-dimensional self-injective $k$-algebra.
Let $Y$ be a finitely generated $A$-module such that the algebra 
$E=$ $\End_A(Y)$ is selfinjective. Suppose that $\End_A^\pr(Y)=$
$\{0\}$ and that that $Y$ is projective as an $E$-module. 
Set $D=$ $E^\op$.

\begin{enumerate}
\item[\rm (i)]
The functor $Y\ten_{D} - : \mod(D)\to$ $\mod(A)$ induces a full 
embedding $\Phi_Y : \mod(D)\to \modbar(A)$ of $\mod(D)$ as a 
distinguished abelian subcategory in $\modbar(A)$, and moreover 
$\Phi_Y$ induces an equivalence of abelian categories $\mod(D)\cong$ 
$\modbar_Y(A)$.
 
\item[\rm (ii)]
The $A$-module $Y$, regarded as an object in the abelian category 
$\modbar_Y(A)$, is a progenerator of $\modbar_Y(A)$.

\item[\rm (iii)]
The canonical functor $\mod(A)\to$ $\modbar(A)$ induces an isomorphism
of abelian categories $\mod_Y(A)\cong$ $\modbar_Y(A)$. 
\end{enumerate}
\end{Theorem}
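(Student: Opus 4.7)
The plan is to prove the three parts in the order (iii), (i), (ii). Observe first that the embedding assertion of (i) --- that $\Phi_Y$ is a full embedding as a distinguished abelian subcategory of $\modbar(A)$ --- is immediate from Theorem \ref{modDmodA} applied to $Y$, since our hypotheses $\End^\pr_A(Y)=\{0\}$ and the $E$-projectivity of $Y$ are exactly condition (ii) of that theorem. So the substance lies in identifying the image of $\Phi_Y$ with $\modbar_Y(A)$, which will follow by combining (iii) with Cabanes' theorem.

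For (iii), I would begin by noting that $\End^\pr_A(Y)=\{0\}$ forces $Y$ to have no nonzero projective direct summand: any such summand $P$ would yield a nonzero endomorphism of $Y$ (projection onto $P$ followed by inclusion) factoring through $P$. Since $A$ is selfinjective, projective $A$-modules are also injective, so by Krull--Schmidt any projective direct summand of an $M\in\mod_Y(A)$ --- realised as a submodule of some $Y^m$ via $M\cong\Im(\varphi)$ --- would split off $Y^m$ and hence be isomorphic to a summand of $Y$, therefore zero. Consequently the canonical functor $\mod_Y(A)\to\modbar_Y(A)$ is a bijection on isomorphism classes. For full faithfulness, every object of $\mod_Y(A)$ is also a quotient of some $Y^m$ (namely $Y^m/\ker(\varphi)$), so Lemma \ref{Homprzero1}(i) gives $\Hom^\pr_A(U,V)=\{0\}$ for all $U,V\in\mod_Y(A)$, yielding the claimed isomorphism of abelian categories.

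For the remaining equivalence in (i), I would invoke Cabanes' theorem \cite[Theorem 2]{CabAst} as recalled just before the statement: $\Hom_A(Y,-)$ restricts to a $k$-linear equivalence $\mod_Y(A)\cong \mod(D)$. A standard adjunction argument --- using exactness of $Y\ten_D-$ together with the fact, already checked inside the proof of Theorem \ref{modDmodA}, that the unit $V\to\Hom_A(Y,Y\ten_D V)$ is an isomorphism --- shows that $Y\ten_D-$ is a quasi-inverse. Composing this with the identification of (iii) gives the desired equivalence $\mod(D)\cong \modbar_Y(A)$ induced by $\Phi_Y$. Part (ii) is then formal: under $\Phi_Y$ the progenerator $D$ of $\mod(D)$ corresponds to $Y\ten_D D\cong Y$, so $Y$ is a progenerator of $\modbar_Y(A)$. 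The one point requiring real care --- and the main obstacle I anticipate --- is the essential surjectivity hidden in the appeal to Cabanes; a self-contained treatment would use the selfinjectivity of $D$ to exhibit any $V\in\mod(D)$ as the image of an endomorphism of a free $D$-module, apply exactness of $Y\ten_D-$ to produce the corresponding object of $\mod_Y(A)$, and recover $V$ from $\Hom_A(Y,-)$ by Tensor-Hom adjunction.
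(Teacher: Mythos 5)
Your proposal is correct and follows essentially the same route as the paper. The paper packages the technical work into Proposition \ref{modYAabelian}, which in effect upgrades Cabanes' $k$-linear equivalence to an equivalence of abelian categories with explicit quasi-inverse $Y\ten_D-$ and records the vanishing of $\Hom^\pr_A$ on $\mod_Y(A)$; Theorem \ref{stableCabanes} is then read off directly. You instead route through Theorem \ref{modDmodA} (for the embedding as a distinguished abelian subcategory), Lemma \ref{Homprzero1}(i) (for the $\Hom^\pr$ vanishing via realising objects of $\mod_Y(A)$ as quotients of copies of $Y$), and Cabanes' theorem cited as a black box, which covers the same ground with the same ingredients. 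One small gain in your write-up is the explicit justification that modules in $\mod_Y(A)$ have no nonzero projective direct summands --- via injectivity of projectives over a selfinjective algebra, splitting off a projective submodule of $Y^m$, and Krull--Schmidt --- whereas the paper asserts this point only in passing just before the theorem statement. Conversely, you are slightly less self-contained in that you outsource essential surjectivity of $Y\ten_D-$ onto $\mod_Y(A)$ to Cabanes rather than reproving it, though you correctly sketch at the end how a self-contained argument would go (and this is precisely what Proposition \ref{modYAabelian}(i)--(iii) accomplishes).
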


As mentioned earlier, not every distinguished abelian subcategory of 
$\modbar(A)$ is of the form as described in Theorem \ref{stableCabanes}, 
since any selfequivalence of $\modbar(A)$ as a triangulated category 
induces a permutation on distinguished abelian subcategories which need 
not preserve the distinguished abelian subcategories of the form as 
described in Theorem \ref{stableCabanes}. Note that the hypothesis
$\End_A^\pr(Y)=\{0\}$ implies that $Y$ has no nonzero projective
direct summand. Therefore, the second statement of Theorem 
\ref{stableCabanes} implies that the isomorphism classes of 
indecomposable summands of the $A$-module $Y$ are 
determined by  $\modbar_Y(A)$.
 
\begin{Corollary} \label{Ydetermined}
Let $A$ be a finite-dimensional self-injective $k$-algebra.
Let $Y$, $Y'$ be finitely generated $A$-modules which both satisfy the 
hypotheses on $Y$ in Theorem \ref{stableCabanes}. Then $\modbar_Y(A)=$ 
$\modbar_{Y'}(A)$ if and only if $\add(Y)=$ $\add(Y')$ in $\mod(A)$. 
\end{Corollary}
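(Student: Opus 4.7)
The plan is to verify the two implications separately, using Theorem \ref{stableCabanes} as a bridge between the $\add$-closures computed in the four categories $\mod(A)$, $\mod_Y(A)$, $\modbar_Y(A)$, and $\mod(D)$, where $D = \End_A(Y)^\op$.

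For the direction $\add(Y) = \add(Y') \Rightarrow \modbar_Y(A) = \modbar_{Y'}(A)$, I would actually prove the stronger statement $\mod_Y(A) = \mod_{Y'}(A)$. Given $M = \Im(\varphi)$ with $\varphi \in \End_A(Y^m)$, the hypothesis $Y \in \add(Y')$ furnishes an integer $n$ together with a split monomorphism $i : Y^m \to (Y')^n$ and its retraction $p : (Y')^n \to Y^m$. The endomorphism $\psi := i \varphi p$ of $(Y')^n$ then satisfies $\Im(\psi) = i(\Im(\varphi)) \cong M$, placing $M$ in $\mod_{Y'}(A)$. Symmetry gives equality, and applying the canonical functor $\mod(A) \to \modbar(A)$ yields $\modbar_Y(A) = \modbar_{Y'}(A)$.

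For the converse, set $\CB := \modbar_Y(A) = \modbar_{Y'}(A)$. By Theorem \ref{stableCabanes}(ii), both $Y$ and $Y'$ are progenerators of the abelian category $\CB$, and by (i) there is an equivalence $\CB \cong \mod(D)$. Inside any module category of a finite-dimensional algebra, all progenerators share a single $\add$-closure---the subcategory of finitely generated projective modules---so transferring through the equivalence gives $\add_\CB(Y) = \add_\CB(Y')$. Using Theorem \ref{stableCabanes}(iii) to replace $\CB$ by $\mod_Y(A)$, this becomes $\add_{\mod_Y(A)}(Y) = \add_{\mod_Y(A)}(Y')$. Since $\mod_Y(A)$ is a full subcategory of $\mod(A)$, splittings inside $\mod_Y(A)$ are splittings inside $\mod(A)$, and any summand of $Y^n$ in $\mod(A)$ automatically lies in $\mod_Y(A)$ (being the image of the corresponding idempotent); hence $\add_{\mod_Y(A)}(Y)$ coincides with $\add(Y)$ in $\mod(A)$. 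This yields $\add(Y) = \add(Y')$ in $\mod(A)$.

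The main delicacy lies in verifying that $\add$ really does translate cleanly through the three intermediate categories, in particular that $Y'$ viewed abstractly as an object of $\modbar_{Y'}(A) = \modbar_Y(A)$ corresponds under Theorem \ref{stableCabanes}(iii) to $Y'$ viewed as an $A$-module in $\mod_Y(A)$. This is underwritten by the observation that $\End_A^\pr(Y) = 0 = \End_A^\pr(Y')$ forces $Y$ and $Y'$ to have no nonzero projective $A$-summand, so that isomorphism of these modules in the stable category implies isomorphism in $\mod(A)$, and idempotents splitting summands are the same in both. Once these identifications are nailed down, the rest of the argument rests on the Morita-theoretic fact that the progenerators of a module category are exactly the modules with the same additive closure as the regular module.
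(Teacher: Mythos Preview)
Your proof is correct and follows essentially the same route as the paper's, though you spell out considerably more detail. The paper's brief argument observes that the absence of projective summands lets one pass freely between $\add(Y)$ in $\mod(A)$ and its image in $\modbar(A)$, and then asserts that $\add(Y)=\add(Y')$ is ``clearly equivalent'' to $\mod_Y(A)=\mod_{Y'}(A)$; your proof unpacks exactly the progenerator mechanism (Theorem~\ref{stableCabanes}(ii)) and the category identifications (Theorem~\ref{stableCabanes}(iii)) that underlie this claim.
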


\begin{Remark}
Corollary \ref{Ydetermined} does not imply that a distinguished abelian 
subcategory is necessarily determined by a progenerator - the same 
object in $\CC$ could be a progenerator of several distinguished abelian 
subcategories. This Corollary only asserts that the distinguished 
abelian subcategories obtained as in Theorem \ref{stableCabanes} are 
determined by their progenerators. See the Example \ref{progenExample}. 
\end{Remark}

We use the following notation from \cite{CabAst}. For any $k$-algebra
$A$, any two $A$-modules $Y$, $U$, and any subset $M$ of 
$\Hom_A(Y, U)$, we set $M\cdot Y=$ $\sum_{\mu\in M}\ \mu(Y)$; that is,
$M\cdot Y$ is the $A$-submodule of $U$ spanned by the sum of the 
images of the $A$-homomorphisms in $M$. Setting $E=$ $\End_A(Y)$, we 
consider $Y$ as an $A$-$E^\op$-bimodule in the obvious way. The 
following Proposition collects the technicalities for the proof of
Theorem \ref{stableCabanes}.

\begin{Proposition} \label{modYAabelian}
Let $A$ be a finite-dimensional self-injective $k$-algebra and $Y$ a 
finitely generated $A$-module such that $E=$ $\End_A(Y)$ is 
selfinjective. Suppose that $Y$ is projective as an $E$-module.
Set $D=$ $E^\op$.

\begin{enumerate}
\item[\rm (i)]
Let $n$ be a positive integer and let $M$ be an $E^\op$-submodule
of $\Hom_A(Y,Y^n)$. The canonical $A$-homomorphism
$$\Psi : Y\ten_{D} M \to M\cdot Y$$
sending $y\ten \mu$ to $\mu(y)$, where $y\in$ $Y$ and $\mu\in$ $M$,
is an isomorphism. In particular, $Y\ten_{D} M$ belongs to
$\mod_Y(A)$. 

\item[\rm (ii)]
Let $M$ be a finitely generated $D$-module. The canonical
$D$-homomorphism
$$M \to \Hom_A(Y,Y\ten_{D} M)$$
sending $m\in $ $M$ to the map $y\mapsto y\ten m$ for $y\in$ $Y$
is an isomorphism.

\item[\rm (iii)]
Let $U$ be an $A$-module contained in $\mod_Y(A)$. The canonical
evaluation map
$$\Phi : Y\ten_{D} \Hom_A(Y,U) \to U$$
sending $y\ten \eta$ to $\eta(y)$, where $y\in$ $Y$ and $\eta\in$
$\Hom_A(Y,U)$, is an isomorphism.

\item[\rm (iv)]
The category $\mod_Y(A)$ is an abelian subcategory of $\mod(A)$, and the 
functor $\Hom_A(Y,-) : \mod(A)\to \mod(D)$
restricts to an equivalence of abelian categories
$$\mod_Y(A)\cong \mod(D)\ $$
with an inverse induced by the functor 
$Y\ten_{D} - : \mod(D) \to \mod(A)$. 

\item[\rm (v)]
The equivalence $\mod_Y(A)\cong$ $\mod(D)$ in {\rm (iv)} sends
$Y$ to the regular $D$-module $D$. In particular, $Y$ is a
progenerator of the category $\mod_Y(A)$. 

\item[\rm (vi)]
If $\End_A^\pr(Y)=$ $\{0\}$, then $\Hom_A^\pr(U,V)=$ $\{0\}$ for any
two $A$-modules $U$, $V$ in $\mod_Y(A)$.
\end{enumerate}
\end{Proposition}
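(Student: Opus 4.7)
The plan is to establish the six statements in the order (i), (iii), (ii), (iv), (v), (vi), following the strategy of Cabanes's proof of his Theorem~2 from \cite{CabAst}. Parts (i) and (iii) are formal consequences of the projectivity of $Y$ as a right $D$-module; (ii) additionally uses the selfinjectivity of $D = E^{\op}$ to supply embeddings into powers of $D$; and (iv)--(vi) are then essentially formal.

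For (i), I would first verify that $\Psi$ is well-defined using the compatibility of the right $D$-action on $Y$ (given by $y \cdot d = e_d(y)$, where $e_d \in E$ corresponds to $d \in D$) with the left $D$-action on $M \subseteq \Hom_A(Y,Y^n)$ by precomposition. When $M$ equals all of $\Hom_A(Y,Y^n) \cong D^n$, the map $\Psi$ specialises to the standard isomorphism $Y \otimes_D D^n \cong Y^n$. For a general $D$-submodule $M \subseteq D^n$, the projectivity (hence flatness) of $Y$ as a right $D$-module makes $Y \otimes_D -$ exact, so the inclusion $M \hookrightarrow D^n$ yields an injection $Y \otimes_D M \hookrightarrow Y^n$ whose image coincides with $M \cdot Y$. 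Since $\Psi$ factors as the tautological surjection $Y \otimes_D M \twoheadrightarrow M \cdot Y$ followed by this injection, it is an isomorphism. Part (iii) is then immediate: writing $U \cong \Im(\varphi)$ with $\varphi \in \End_A(Y^m)$ and letting $\iota_j : Y \to Y^m$ be the canonical inclusions, the compositions $\varphi \circ \iota_j$ lie in $\Hom_A(Y,U)$ and their images cover $U$, so $\Hom_A(Y,U) \cdot Y = U$; the embedding $\Hom_A(Y,U) \hookrightarrow \Hom_A(Y,Y^m) \cong D^m$ places us in the setting of (i), identifying $\Phi$ with the $\Psi$ for this submodule.

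Part (ii) is the main obstacle. Selfinjectivity of $D$ provides an embedding $M \hookrightarrow D^n$ into a finitely generated injective envelope, a direct summand of some $D^n$. By (i), $Y \otimes_D M \cong M \cdot Y$ is an object of $\mod_Y(A)$, so (iii) applied to $U = Y \otimes_D M$ yields the counit isomorphism $Y \otimes_D \Hom_A(Y, Y \otimes_D M) \cong Y \otimes_D M$. The adjunction unit $\alpha_M : M \to \Hom_A(Y, Y \otimes_D M)$ is injective by a diagram chase comparing with the case $M = D^n$, where the unit is the identity. For surjectivity, set $N := \Hom_A(Y, Y \otimes_D M) = \Hom_A(Y, M \cdot Y) \subseteq \Hom_A(Y, Y^n) = D^n$; the inclusions yield $M \cdot Y = N \cdot Y$ as $A$-submodules of $Y^n$, and applying (i) to the inclusion $M \hookrightarrow N \subseteq D^n$ shows that $Y \otimes_D M \to Y \otimes_D N$ is an isomorphism. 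Hence $Y \otimes_D (N/M) = 0$ by exactness of $Y \otimes_D -$. Embedding $N/M$ into a further $D^r$ via selfinjectivity of $D$ and applying (i) one last time forces $N/M = 0$, concluding that $\alpha_M$ is an isomorphism.

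The remaining statements are formal. Part (iv) is the combined content of (ii) and (iii): the adjunction $(Y \otimes_D -, \Hom_A(Y,-))$ restricts to a pair of mutually inverse equivalences between $\mod(D)$ and $\mod_Y(A)$, which transports the abelian structure of $\mod(D)$ onto $\mod_Y(A)$. Part (v) is the specialisation $M = D$: we have $Y \otimes_D D \cong Y$ and $\Hom_A(Y,Y) = E$, which is the regular left $D$-module, so $Y$ corresponds under the equivalence to the progenerator $D$ of $\mod(D)$. Finally, (vi) is an immediate application of Lemma \ref{Homprzero1}(i), since every object of $\mod_Y(A)$ is a quotient of a power of $Y$.
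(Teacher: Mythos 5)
Your proposal is correct and follows the same overall strategy as the paper, built on the flatness of $Y$ as a right $D$-module and the Cabanes-style bookkeeping of submodules of $\Hom_A(Y,Y^n)$. There are two worthwhile differences. First, you prove (iii) before (ii) by identifying $\Phi$ for $U \cong \Im(\varphi) \subseteq Y^m$ with the map $\Psi$ of part (i) applied to $M = \Hom_A(Y,U) \subseteq D^m$; this makes (iii) depend only on (i). The paper instead gives a direct surjective/injective argument for $\Phi$ (and only mentions a reduction to (i) and (ii) as an alternative), so your route is a genuine, slightly leaner variant. Second, and more substantively, for (ii) the paper cites \cite[Lemma 5]{CabAst} to obtain the equality $\Hom_A(Y,M\cdot Y) = M$ inside $\Hom_A(Y,Y^n)$, whereas you re-derive this fact from scratch: you set $N = \Hom_A(Y,M\cdot Y)$, observe $M \subseteq N$ and $M\cdot Y = N\cdot Y$, apply (i) twice to conclude $Y\otimes_D(N/M) = 0$, and then use selfinjectivity of $D$ once more to embed $N/M$ into a free module and apply (i) a final time to force $N/M = 0$. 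This trades the external reference for a self-contained argument at the modest cost of an extra round of the same flatness-plus-embedding reasoning. The remaining parts (iv)--(vi) are handled identically to the paper.
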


\begin{proof}
There is clearly a well-definded $A$-homomorphism
$\Psi : Y\ten_{D} M \to M\cdot Y$ as described in (i), and this map
is obviously surjective. To show that this map is also injective,
consider the diagram
$$\xymatrix{ Y\ten_{D} M \ar[r]^{\Psi}\ar[d] & M\cdot Y \ar[d] \\
Y\ten_{D} \Hom_A(Y,Y^n) \ar[r] & Y^n}$$
where the vertical maps are induced by the inclusions
$M\subseteq$ $\Hom_A(Y,Y^n)$ and $M\cdot Y\subseteq$ $Y^n$, and where 
the bottom horizontal
map is the obvious evaluation map.  A trivial verification shows that
this diagram commutes. Since $\Hom_A(Y,Y^n)$ is a free 
$D$-module of rank $n$, it follows that the bottom horizontal
map is an isomorphism. Since $Y$ is projective as a right
$D$-module by the assumptions, it follows that the left
vertical map is injective. This implies that $\Psi$ is injective,
whence (i). Since $E$, hence $D$, is self-injective, we may assume that 
the $D$-module $M$ in (ii) is a submodule of $\Hom_A(Y,Y^n)$ for 
some positive integer $n$. Applying the functor $\Hom_A(Y,-)$ to
the isomorphism $\Psi$ yields an isomorphism 
$$\Hom_A(Y, Y\ten_{D} M) \cong \Hom_A(Y, M\cdot Y)$$
By \cite[Lemma 5]{CabAst}, the right side in this isomorphism is
equal to $M$ (this is an equality of subsets of $\Hom_A(Y,Y^n)$).  
The inverse of this isomorphism is the map described in (ii).
For (iii), observe first that the map $\Phi$ is surjective, since $U$ 
is in $\mod_Y(A)$, hence a quotient of a finite direct sum of copies 
of $Y$. For the injectivity, again since $U$ is in $\mod_Y(A)$, hence 
isomorphic to a submodule of $Y^n$ for some positive integer $n$, it 
follows that there is a commutative diagram of the form
$$\xymatrix{ Y\ten_{D} \Hom_A(Y,U) \ar[rr]^{\Phi} \ar[d] 
& & U\ar[d] \\
Y\ten_{D} \Hom_A(Y,Y^n) \ar[rr] & & Y^n}$$
where the right vertical map is injective. The left vertical
map is then injective, too, since $Y$ is projective as an
$E^\op$-module, and the bottom horizontal map is an isomorphism.
This shows that $\Phi$ is injective, whence (iii). One can prove
(iii) also by applying (i) and (ii) with $U=$ $M\cdot Y$. 
Statement (iv) follows from (ii) and (iii).
Statement (v) is an immediate consequence of (iv). 
Statement (vi) follows from Lemma \ref{Homprzero}.
\end{proof}

\begin{proof}[Proof of Theorem \ref{stableCabanes}]
In the situation of Theorem \ref{stableCabanes},  Cabanes' linear 
equivalence from \cite[Theorem 2]{CabAst} is an equivalence of abelian 
categories, by Proposition \ref{modYAabelian} (iv). By construction,
$\modbar_Y(A)$ is the image in $\modbar(A)$ of $\mod_Y(A)$. Thus we 
need to show that the inclusion $\mod_Y(A)\subseteq$ $\mod(A)$ composed 
with the canonical functor $\mod(A)\to$ $\modbar(A)$ is still a full
embedding. By Proposition \ref{modYAabelian} (vi) we have
$\Hom_A^\pr(U,V)=$ $\{0\}$ for any two $A$-modules $U$, $V$ in 
$\mod_Y(A)$. This implies that the canonical functor $\mod_Y(A)\to$ 
$\modbar(A)$ is a full embedding.
Since exact triangles in $\modbar(A)$ are induced by short exact
sequences in $\mod(A)$, it follows that the image $\modbar_Y(A)$ of 
$\mod_Y(A)$ in $\modbar(A)$ is a distinguished abelian subcategory in 
$\modbar(A)$. Thus Theorem \ref{stableCabanes} follows from Proposition 
\ref{modYAabelian}.
\end{proof}

\begin{proof}[{Proof of Corollary \ref{Ydetermined}}]
Note that the hypothesis $\End_A^\pr(Y)=$ $\{0\}$ implies that $Y$ has 
no nonzero projective direct summand; similarly for $Y'$. Thus the 
linear subcategories $\add(Y)$ and $\add(Y')$ of $\mod(A)$ are equal if 
and only if their images in $\modbar(A)$ are equal. This equality is 
clearly equivalent to $\mod_Y(A)=$ $\mod_{Y'}(A)$, whence the result. 
\end{proof}

%%%%%%%%%%%%%%%%%%%%%%%%%%%%%%%%%%%%%%%%%%%%%%%%%%%%%%%%%%%%%%%%%%%%%%
\section{Examples and further remarks}
\label{ExamplesSection}

The following example illustrates that Theorem \ref{modDmodA} and
Theorem \ref{stableCabanes} 
cover some cases not covered by Theorem \ref{AmodIembedding1}. 

\begin{Example} \label{hypothesesExample}
Let $A$ be a finite-dimensional selfinjective $k$-algebra. 
Let $Y$ be a nonprojective uniserial $A$-module of length $2$ with two 
non-isomorphic simple composition factors $S$ and $T$. Then 
$\End_A(Y)\cong$ $\Endbar_A(Y)\cong$ $k$, and 
hence $\mod_Y(A)=$ $\add(Y)$ is abelian semisimple, but is not the 
category of a quotient of $A$. Indeed, such a quotient algebra would have 
to be semisimple, but $\mod_Y(A)$ contains no simple $A$-module, because 
neither the simple quotient $S$ of $Y$ nor the simple submodule $T$ of 
$Y$ are contained in $\mod_Y(A)$.
\end{Example}

There are trivial examples of distinguished
abelian subcategories beyond those constructed in Theorem
\ref{modDmodA} and Theorem \ref{stableCabanes}.

\begin{Example}  \label{orthogonalExample}
Let $A$ be a split finite-dimensional selfinjective $k$-algebra. 
Let $n$ be a positive integer and let 
$\{X_i\ |\ 1\leq i\leq n\}$ be a set of $A$-modules which are pairwise 
orthogonal in $\modbar(A)$; that is, $\Endbar_A(X_i)\cong $ $k$ and 
$\Hombar_A(X_i,X_j)=$ $\{0\}$,  where $1\leq i,j\leq n$, $i\neq j$. Set
$Y=$ $\oplus_{i=1}^n X_i$. Then $\Endbar_A(Y)$ is a commutative
split semisimple $k$-algebra, and the image of $\add(Y)$ is a 
semisimple distinguished abelian subcategory of $\modbar(A)$,
equivalent to $\mod(\Endbar_A(Y))$. 
See for instance \cite{Pogorz1}, \cite{Pogorz2}, \cite{RiRo},
\cite{KoLiu}, \cite{Dugas15} for more details on orthogonal sets of 
modules in  $\modbar(A)$.
\end{Example} 

\begin{Example} \label{ellDinfinite-example}
Let $A$ be a finite-dimensional selfinjective $k$-algebra.
If $A$ has two nonisomorphic simple modules $S$, $T$ such that
$\dim_k(\Ext^1_A(S,T))\geq 2$ and if $k$ is infinite, then $A$ has 
infinitely many pairwise non-isomorphic uniserial modules of length $2$
with composition factors  $S$ and $T$, from top to bottom. The
$A$-endomorphism algebra of any such module is $1$-dimensional, and there 
is no nonzero $A$-homomorphism between any two non-isomorphic uniserial 
modules with these composition factors. Therefore the full additive 
subcategory of $\modbar(A)$  generated by these modules is a semisimple 
distinguished abelian subcategory with infinitely many isomorphism 
classes of simple objects. 
\end{Example}

The next example shows that the hypothesis on $\CD$ to contain all 
simple $A$-modules in the statement of Theorem \ref{AmodIembeddings} 
is necessary.

\begin{Example} \label{simpleExample}
 Suppose that $k$ is an algebraically closed field of characteristic $5$. 
Consider the algebra $A=$ $kD_{10}\cong$ $k(C_5\rtimes C_2)$. This is a
Nakayama algebra with two nonisomorphic simple modules $S$, $T$ and
uniserial projective indecomposable modules of length $5$. 
Let $U$ be a uniserial module of length $2$ with composition factors
$S$ and $T$. Then $\Endbar_A(U)=$ $\End_A(U)\cong$ $k$. Thus the
finite direct sums of modules isomorphic to $U$ form a semisimple
distinguished abelian subcategory $\CD$ of $\modbar(A)$ in which $U$ is
up to isomorphism the unique simple object. We have $\soc^2(A)=$
$r(J(A)^2)\subseteq$ $J(A)^2$. Thus $\mod(A/J(A)^2)$ is a 
distinguished abelian subcategory of $\modbar(A)$ containing the
simple $A$-modules $S$ and $T$ and all uniserial modules of length $2$. 
In particular, $\mod(A/J(A)^2)$ contains the subcategory $\CD$, but
the simple object $U$ in $\CD$ does not remain simple in 
$\mod(A/J(A)^2)$.
\end{Example}

Let $A$ be a symmetric $k$-algebra and $I$ a proper ideal in $A$. By a
result of Nakayama \cite[Theorem 13]{NakI}, the quotient algebra
$A/I$ is symmetric  if and only if $I=$ $\ann(z)$
for some $z\in$ $Z(A)$. In that case, if $s$ is a symmetrising form
on $A$, then $z\cdot s$ has kernel $I$ and induces a symmetrising 
form on $A/I$.  The following Proposition  shows that the elements 
$z\in$ $Z(A)$ satisfying $z^2=0$ parametrise the symmetric quotients 
$A/I$ of $A$ satisfying $\End_A^\pr(A/I)=$ $\{0\}$ through the 
correspondence $z\mapsto \ann(z)$.

\begin{Proposition} \label{z2Lemma}
Let $A$ be a symmetric $k$-algebra and $I$ a proper ideal in $A$ such
that $I=$ $\ann(z)$ for some $z\in$ $Z(A)$. We have $\End_A^\pr(A/I)=$ 
$\{0\}$ if and only if $z^2=0$.
\end{Proposition}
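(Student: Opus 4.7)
The plan is to reduce this to Theorem \ref{AmodIembedding1} by computing the right annihilator $r(I)$ of $I = \ann(z)$ explicitly in terms of $z$. Recall that by Theorem \ref{AmodIembedding1} (equivalence of (iii) and (v)), and using that a symmetric algebra is selfinjective, we have $\End_A^\pr(A/I) = \{0\}$ if and only if $r(I)^2 = 0$. So the whole task is to identify $r(I)$ when $I = \ann(z)$ for a central element $z$.

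First I would establish that $r(I) = Az$. The inclusion $Az \subseteq r(I)$ uses centrality of $z$: for any $a \in \ann(z)$ and any $x \in A$, we have $a(xz) = a(zx) = (az)x = 0$, so $Az$ annihilates $I$ on the right. For the reverse inclusion I would compute $l(Az)$ and appeal to the Nakayama bijection for selfinjective algebras. An element $a \in A$ lies in $l(Az)$ iff $(ax)z = 0$ for all $x \in A$; specializing $x = 1$ gives $a \in \ann(z)$, and conversely, if $a \in \ann(z)$ then $(ax)z = a(xz) = a(zx) = (az)x = 0$. Hence $l(Az) = \ann(z) = I$, and applying $r$ and using $r(l(Az)) = Az$ yields $r(I) = Az$.

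Next I would compute $r(I)^2 = (Az)^2$. Since $z$ is central, any element of $(Az)(Az)$ has the form $\sum_i (a_i z)(b_i z) = \sum_i a_i b_i z^2$, so $(Az)^2 = Az^2$. Therefore $r(I)^2 = 0$ if and only if $Az^2 = 0$, which (taking the element $1 \in A$) holds if and only if $z^2 = 0$. Combined with the equivalence from Theorem \ref{AmodIembedding1} recalled above, this gives $\End_A^\pr(A/I) = \{0\}$ iff $z^2 = 0$, as claimed.

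There is no real obstacle here; the only thing one has to be slightly careful about is invoking centrality of $z$ at the right spots (to conclude that $\ann(z)$ is a two-sided ideal, to move $z$ past elements of $A$ when verifying $Az \subseteq r(I)$ and $l(Az) \subseteq \ann(z)$, and to compute $(Az)^2 = Az^2$). Everything else is a routine application of the Nakayama duality between left and right ideals in a selfinjective algebra.
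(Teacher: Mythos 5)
Your proof is correct, but it takes a different route from the paper's. You invoke the already-established equivalence $\End_A^\pr(A/I)=\{0\}\iff r(I)^2=\{0\}$ from Theorem \ref{AmodIembedding1} and reduce the problem to the ideal-theoretic computation $r(I)=Az$: you show $l(Az)=\ann(z)=I$ (centrality of $z$ makes both inclusions immediate) and then apply the Nakayama bijection $r(l(\cdot))=\mathrm{id}$ to conclude $r(I)=Az$, so that $r(I)^2=(Az)^2=Az^2$ vanishes iff $z^2=0$. The paper instead argues directly and self-containedly: it records the $A$-$A$-bimodule isomorphism $A/I\cong Az$ (via $a+I\mapsto az$), observes that an endomorphism of $Az$ factors through a projective iff it factors as $Az\to A\xrightarrow{\cdot z}Az$, and uses symmetry of $A$ to write any map $Az\to A$ as right multiplication by some $c\in A$; the resulting endomorphism has image $Az^2c$, which vanishes for all $c$ iff $z^2=0$. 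Your route is more modular, leaning on the general machinery already proved (Theorem \ref{AmodIembedding1} and the Nakayama annihilator duality), while the paper's is a short direct verification that makes the role of the symmetrizing structure (extendability of homomorphisms into $A$) explicit. Both hinge on centrality of $z$; your $r(I)=Az$ is in effect the same identification the paper encodes via $A/I\cong Az$.
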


\begin{proof}
By the assumptions on $I$ and $z$, multiplication by $z$ induces an 
$A$-$A$-bimodule isomorphism $A/I\cong$ $Az$ mapping $a+I$ to $az$, 
where $a\in$ $A$. An endomorphism of $Az$ as a left $A$-module factors
through a projective module if and only if it factors through the
map $A\to$ $Az$ given by multiplication with $z$. Any $A$-homomorphism
$Az\to$ $A$ extends to an endomorphism of $A$ (because $A$ is symmetric)
hence is induced by right multiplication with an element $c$. Composing
the two maps $Az\to$ $A\to$ $Az$ given by right multiplication with $c$ 
and $z$, respectively, yields the endomorphism of $Az$ given by
right multiplication with $cz$, and the image of this endomorphism is
$Az^2c$. It follows that this endomorphism is zero for all $c\in$ $A$
if and only if $z^2=0$. The result follows.
\end{proof}

Different elements in $Z(A)$ may have the same annihilators. 
If $z$, $z'\in$ $Z(A)$ such that $I=$ $\ann(z)=$
$\ann(z')$, and if $s$ is a symmetrising form for $A$, then both
$z\cdot s$ and $z'\cdot s$ induce symmetrising forms on $A/I$. Thus 
there exists an element $y\in$ $A$ such that $y+I\in$ $Z(A/I)^\times$
and such that $z' = $ $yz$. Specialising Theorem \ref{AmodIembedding1}
to symmetric quotients of symmetric algebras yields the following result.

\begin{Proposition} \label{symmz2zero}
Let $A$ be an indecomposable nonsimple symmetric $k$-algebra. 
Let $z\in$ $Z(A)$ such that $z^2=0$ and such that $\soc^2(A)\subseteq$ 
$Az$. Set $I=$ $\ann(z)$.
Then $I\subseteq$ $J(A)^2$, the algebras $A$ and $A/I$ have the same
quiver, $A/I$ is symmetric, and the canonical functor $\mod(A/I)\to$ 
$\mod(A)$ induces an embedding $\mod(A/I)\to$ $\modbar(A)$ as 
distinguished abelian subcategory. In particular, $\modbar(A)$ has a 
connected distinguished abelian subcategory $\CD$ satisfying 
$\ell(\CD)=$ $\ell(A)$.
\end{Proposition}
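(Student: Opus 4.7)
The plan is to derive the four assertions directly from the machinery assembled earlier, with the key identification being $r(I) = Az$. Since $z$ is central and $I = \ann(z)$, the product $Iz$ vanishes, so $Az \subseteq r(I)$; equality follows by comparing dimensions, using the Frobenius identity $\dim_k(I) + \dim_k(r(I)) = \dim_k(A)$ together with the $A$-$A$-bimodule isomorphism $A/I \cong Az$ sending $a + I$ to $az$, which gives $\dim_k(Az) = \dim_k(A/I) = \dim_k(A) - \dim_k(I) = \dim_k(r(I))$.

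With $r(I) = Az$ identified, the inclusion $I \subseteq J(A)^2$ becomes Nakayama's inclusion-reversing bijection on ideals applied to the hypothesis $\soc^2(A) = r(J(A)^2) \subseteq Az = r(I)$, yielding (1). Claim (2) then follows because $I \subseteq J(A)^2$ produces a canonical isomorphism $A/J(A)^2 \cong (A/I)/J(A/I)^2$, and the Gabriel quiver of a finite-dimensional algebra is determined by this quotient. Claim (3), that $A/I$ is symmetric, is precisely the Nakayama criterion recalled in the paragraph preceding the proposition. Claim (4), the full embedding of $\mod(A/I)$ as a distinguished abelian subcategory of $\modbar(A)$, comes from combining Proposition \ref{z2Lemma}, which under the hypothesis $z^2 = 0$ supplies $\End_A^\pr(A/I) = \{0\}$, with the equivalence of (i) and (iii) in Theorem \ref{AmodIembedding1}.

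For the final ``in particular'' statement I would take $\CD = \mod(A/I)$. Connectedness of $\CD$ amounts to indecomposability of the algebra $A/I$, which follows from (2): since $A$ is indecomposable, its Gabriel quiver is connected, and $A/I$ inherits this same quiver, so it too is indecomposable. The equality $\ell(\CD) = \ell(A)$ is immediate from $I \subseteq J(A)^2 \subseteq J(A)$, since the simple $A/I$-modules then coincide with the simple $A$-modules. The only step that is not a straight citation is the dimension count giving $r(I) = Az$; the rest is an assembly of results already in place, so I do not anticipate any serious obstacle.
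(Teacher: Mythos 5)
Your proof is correct and follows essentially the same route as the paper's: Nakayama's criterion for $A/I$ symmetric, the annihilator correspondence applied to $\soc^2(A)\subseteq Az$ to obtain $I\subseteq J(A)^2$ and hence the common quiver, Proposition \ref{z2Lemma} for $\End_A^\pr(A/I)=\{0\}$, and Theorem \ref{AmodIembedding1} for the embedding. The only small detour is that you first establish $r(I)=Az$ by a Frobenius dimension count before taking annihilators, whereas the paper passes directly via $l(Az)=\ann(z)=I$; both are valid bookkeeping for the same annihilator argument.
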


\begin{proof}
By \cite[Theorem 13]{NakI} the algebra $A/I$ is symmetric. By the 
assumptions, we have $\soc^2(A)\subseteq$ $Az$. Taking annihilators yields
$I\subseteq$ $J(A)^2$, and hence $A$ and $A/I$ have the same quiver.
In particular, $\ell(A)=$ $\ell(A/I)$. Since $z^2=0$, it follows from 
Proposition \ref{z2Lemma} that  $\End_A^\pr(A/I)=$ $\{0\}$. Theorem 
\ref{AmodIembedding1} implies the result.
\end{proof}

\begin{Example} \label{qciEx}
Let $p$ be a prime number such that $p\geq 7$, and let $k$ be a field
of characteristic $p$. Let
$$A= k\langle x, y\ |\ x^p=y^p=0,\ xy=-yx\rangle$$
Then $A$ is a split local symmetric algebra of dimension $p^2$ with 
basis $\{x^iy^j\ |\ 0\leq i,j\leq p-1\}$. The linear map sending 
$x^{p-1}y^{p-1}$ to $1$ and all other monomials in this basis to $0$
is a symmetrising form for $A$. The monomials $x^iy^j$ with either
both $i$, $j$ even or one of them equal to $p-1$ form a basis of $Z(A)$.
The monomial $x^{p-1}y^{p-1}$ is a basis of $\soc(A)$, and the
set $\{x^{p-2}y^{p-1}, x^{p-1}y^{p-2}, x^{p-1}y^{p-1}\}$ is a basis of
$\soc^2(A)$. The element 
$z = x^{p-3}y^{p-3}$
belongs to $Z(A)$, satisfies $z^2=0$ (this is where we use $p\geq 7$), 
and we have $\soc^2(A)\subseteq$ $Az$. Thus $A$ and $z$ satisfy the 
assumptions in Proposition \ref{symmz2zero}. Therefore, setting $I=$ 
$\ann(z)$, we have a full embedding $\mod(A/I)\to$ $\modbar(A)$ as 
distinguished abelian subcategory, and the algebras $A$ and $A/I$ are
both symmetric and have the same quiver. More precisely, we have
$$A/I \cong k\langle x, y\ | x^3=y^3=0,\ xy=-yx\rangle\ ,$$
which is a $9$-dimensional quantum complete intersection, with
basis the image of the set of monomials $\{1, x, y, x^2, xy, y^2,
x^2y, xy^2, x^2y^2\}$.
Indeed, $z$ is annihilated, in $A$, by a monomial $x^iy^j$ if and only 
if at least one of $i$, $j$ is greater or equal to $3$.
\end{Example}

\begin{Remark} \label{center-Remark}
Let $\CC$ be an essentially small $k$-linear triangulated category, and
let $\CD$ be a $k$-linear distinguished abelian 
subcategory of $\CC$. Restriction to objects in $\CD$ induces a 
$k$-algebra homomorphism $Z(\CC) \to Z(\CD)$.
If $\CD\cong$ $\mod(D)$ for some $k$-algebra $D$, then this induces a
$k$-algebra homomorphism $Z(\CC) \to Z(D)$.
If in addition $D$ a finite-dimensional $k$-algebra, then this yields 
finite-dimensional $k$-algebra quotients of $Z(\CC)$. Finally, if
$\CC=$ $\modbar(A)$ for some finite-dimensional selfinjective
$k$-algebra $A$, then the canonical isomorphism  $Z(A)\cong$ 
$Z(\mod(A))$ induces an algebra homomorphism $\uZ(A)\to$ 
$Z(\modbar(A))$, where $\uZ(A)$ is the stable center of $A$. Thus 
restriction to a distinguished abelian subcategory of $\modbar(A)$ 
which is equivalent to $\mod(D)$ for some finite-dimensional $k$-algebra
$D$ yields a $k$-algebra homomorphism  $\uZ(A)\to$ $Z(D)$. Such
a homomorphism is in general neither injective nor surjective. 
\end{Remark}

\begin{Remark} \label{GrothendieckgroupRemark}
Let $\CC$ be an essentially small  triangulated category, and let 
$\CD$ be a distinguished abelian  subcategory of $\CC$. The 
Grothendieck group $a(\CD)$ of $\CD$ is the 
abelian group generated by the isomorphism classes $[X]$ of objects $X$ 
in $\CD$ subject to the relations $[X]-[Y]+[Z]$ for any short exact sequence
$0\to X\to Y\to Z\to 0$ in $\CD$. 
The Grothendieck group $a(\CC)$ of $\CC$ is the abelian
group generated by the isomorphism classes $[X]$ of objects $X$ in $\CC$
subject to the relations $[X]-[Y]+[Z]$ for any exact triangle
$ X\to Y\to Z\to \Sigma(X)$ in $\CC$. Since short exact sequences
in $\CD$ can be completed to exact triangles in $\CD$, it follows that 
the inclusion $\CD\to \CC$ induces a canonical group homomorphism 
$a(\CD)\to a(\CC)$. 
In general, this group homomorphism need not be injective or surjective. 
If $\CC$ is monoidal and $\CD$ a monoidal distinguished abelian
subcategory such that tensor products with objects in $\CC$ and $\CD$
preserve exact triangles in $\CC$ and short exact sequences in $\CD$, 
respectively, then the canonical map $a(\CD)\to$ $a(\CC)$ is a ring 
homomorphism.
If $\CC=\modbar(A)$ for some finite-dimensional selfinjective $k$-algebra
$A$ and $\CD$ contains all simple $A$-modules, then the canonical
group homomorphism $a(\CD)\to$ $a(\CC)$ is surjective. Note that if
the Cartan matrix of $A$ is nonsingular, then $a(\CC)$ is finite, while if
$\CD\cong$ $\mod(D)$ for some finite-dimensional $k$-algebra, then
$a(\CD)$ is the free abelian group on the set of isomorphism classes of
simple $D$-modules.
\end{Remark}

\begin{Remark} \label{fun-finite}
Let $A$ be a finite-dimensional self-injective algebra over a field $k$
and let $I$ be an ideal in $A$ which contains its right annihilator 
$r(I)$. Then the distiguished abelian subcategory $\mod(A/I)$ is
functorially finite in $\modbar(A)$  (cf. \cite[\S 3]{AusSmal80}, 
\cite{AusSmal81}). 
Indeed, let $U$ be an $A$-module.
Then $U/IU$ and the annihilator $U_I$ of $I$ in $U$ are in $\mod(A/I)$.
The canonical map $U\to$ $U/IU$, regarded as a morphism in $\modbar(A)$,
is a left approximation of $U$, and the inclusion $U_I\to U$, again 
regarded as a morphism in $\modbar(A)$, is a right approximation of $U$.
(Of course, $\mod(A/I)$ is also functorially finite in $\mod(A)$, by the
same argument.) 
\end{Remark}

The Tensor-Hom adjunction induces a natural transformation of
bifunctors at the level of stable categories, but this
need not be an isomorphism (cf. Proposition \ref{no-adjoint}).

\begin{Lemma} \label{stable-adj-Lemma}
Let $A$ be a finite-dimensional selfinjective $k$-algebra,
and let $D$ be a finite-dimensional $k$-algebra. Let $Y$ be a
finitely generated $A$-$D$-bimodule, $U$  a finitely generated
$A$-module and $V$ a finitely generated $D$-module. 
The Tensor-Hom adjunction 
$$\Psi : \Hom_A(Y\ten_D V, U)\cong \Hom_D(V, \Hom_A(Y,U))$$
sends $\Hom^\pr_A(Y\ten_D V, U)$  to $\Hom_D(V, \Hom^\pr_A(Y,U))$
and induces a natural map
$$\uPsi : \Hombar_A(Y\ten_D V, U)\to \Hom_D(V, \Hombar_A(Y,U))\ .$$
\end{Lemma}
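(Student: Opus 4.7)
The plan is to make the adjunction $\Psi$ explicit and then verify, one paragraph per assertion, that it restricts to the projective part of the Hom-space and hence descends to the stable categories.

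First I would write out the standard formula: $\Psi$ sends an $A$-linear $\varphi : Y\ten_D V \to U$ to the $D$-linear map $\Psi(\varphi) : V \to \Hom_A(Y,U)$ given by $\Psi(\varphi)(v)(y) = \varphi(y\ten v)$. For each fixed $v\in V$, the map $\iota_v : Y\to Y\ten_D V$ defined by $y\mapsto y\ten v$ is a left $A$-homomorphism (since the left $A$- and right $D$-actions on $Y$ commute), and we have $\Psi(\varphi)(v) = \varphi\circ\iota_v$. Consequently, if $\varphi$ factors as $Y\ten_D V \to P \to U$ with $P$ a projective $A$-module, then each $\Psi(\varphi)(v)$ factors through the same $P$. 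This shows that $\Psi$ sends $\Hom^\pr_A(Y\ten_D V, U)$ into $\Hom_D(V, \Hom^\pr_A(Y,U))$.

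Next I would apply the left-exact functor $\Hom_D(V,-)$ to the short exact sequence of $D$-modules
$$\xymatrix{0 \ar[r] & \Hom^\pr_A(Y,U) \ar[r] & \Hom_A(Y,U) \ar[r] & \Hombar_A(Y,U) \ar[r] & 0}$$
to identify $\Hom_D(V, \Hom^\pr_A(Y,U))$ with the kernel of the canonical map $\Hom_D(V,\Hom_A(Y,U))\to \Hom_D(V,\Hombar_A(Y,U))$ induced by post-composing with the projection. Combining with the preceding paragraph, the composition of $\Psi$ with this canonical map annihilates $\Hom^\pr_A(Y\ten_D V, U)$, hence descends uniquely to the required map
$$\uPsi : \Hombar_A(Y\ten_D V, U) \to \Hom_D(V, \Hombar_A(Y,U)).$$

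The naturality of $\uPsi$ in the three variables $Y$, $U$, $V$ is inherited from the naturality of $\Psi$ together with the naturality of the canonical surjections $\Hom_A(-,-) \to \Hombar_A(-,-)$. No real obstacle is anticipated; the only points requiring care are that $\iota_v$ is an $A$-homomorphism (a bimodule check) and the use of left-exactness of $\Hom_D(V,-)$ to identify the kernel correctly.
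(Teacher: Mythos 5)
Your proof is correct and follows essentially the same approach as the paper: both arguments reduce to the observation that $\Psi(\varphi)(v)$ is $\varphi$ precomposed with the $A$-linear map $y\mapsto y\ten v$, so a factorization of $\varphi$ through a projective $A$-module yields one for each $\Psi(\varphi)(v)$. The paper routes this through a fixed projective cover of $U$ whereas you take an arbitrary projective factorization, and you spell out the descent step via left-exactness of $\Hom_D(V,-)$, which the paper leaves implicit; these are presentational differences only.
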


\begin{proof}
We need to show that $\Psi$ sends $\Hom^\pr_A(Y\ten_D V, U)$ 
to $\Hom_D(V, \Hom^\pr_A(Y,U))$.
Let $\pi : P\to$ $U$ be a projective cover of $U$. Then any
$A$-homomorphism ending at $U$ which factors through a projective
$A$-module factors through $\pi$. Let $\varphi : Y\ten_D V\to$ $U$
be an $A$-homomorphism which factors through $\pi$. That is, there
is an $A$-homomorphism $\alpha : Y\ten_D V\to$ $P$ such that
$\varphi=$ $\pi\circ\alpha$. For $v\in$ $V$, denote by $\alpha_v :
Y\to$ $P$ the $A$-homomorphism defined by $\alpha_v(y)=$ 
$\alpha(y\ten v)$, for all $y\in$ $Y$. Through the 
adjunction $\Psi$, the homomorphism $\varphi$ corresponds to the
map $v\mapsto (y\mapsto \varphi(y\ten v))$ in 
$\Hom_D(V, \Hom_A(Y,U))$. Now $\varphi(y\ten v)=$
$\pi(\alpha(y\ten v))=$ $\pi(\alpha_v(y))$, hence the map
$y\mapsto$ $(\varphi(y\ten v))$ is equal to $\pi\circ\alpha_v$,
hence belongs to $\Hom_A^\pr(Y,U)$. This shows that $\Psi$ induces
a map $\uPsi$ as stated
\end{proof}

\begin{Remark} \label{stable-adj-Remark}
The map $\uPsi$ in Lemma \ref{stable-adj-Lemma} need not be an 
isomorphism. Consider for instance the case
$D=A$ and $Y=A$ regarded as an $A$-$A$-bimodule. Then $\Phi$ is the
canonical functor $\mod(A)\to$ $\modbar(A)$. The map $\uPsi$ is
zero for all $U$, $V$ (because $Y$ is projective as a left $A$-module), 
but if $U=V$ is nonprojective, then the left side in the map $\uPsi$
is $\Endbar_A(U)$, hence nonzero. 
\end{Remark}

\begin{Remark} \label{AR-Remark}
Let $\CD$ be a distinguished abelian subcategory in a triangulated
category $\CC$, and let $0\to X\to Y\to Z\to 0$ be a short exact
sequence in $\CD$. If the associated exact triangle 
$X\to Y\to Z\to\Sigma(X)$ in $\CC$  is an Auslander-Reiten triangle, then
the short exact sequence above clearly is an Auslander-Reiten sequence in 
$\CD$. The converse need not hold.
\end{Remark}

%%%%%%%%%%%%%%%%%%%%%%%%%%%%%%%%%%%%%%%%%%%%%%%%%%%%%%%%%%%%%%%%%%

\end{document}